\newtheorem{definition}{Definition}[section]
\newtheorem{theorem}[definition]{Theorem}
\newtheorem{lemma}[definition]{Lemma}
\newtheorem{corollary}[definition]{Corollary}
\newtheorem{remark}[definition]{Remark}
\newtheorem{example}[definition]{Example}
\newtheorem{proposition}[definition]{Proposition}
\def\makeCal#1{%
\expandafter\newcommand\csname c#1\endcsname{\mathcal{#1}}}
\def\makeBf#1{%
\expandafter\newcommand\csname b#1\endcsname{\mathbf{#1}}}
\def\makeBb#1{%
\expandafter\newcommand\csname m#1\endcsname{\mathbb{#1}}}
\def\makeFrak#1{%
\expandafter\newcommand\csname f#1\endcsname{\mathfrak{#1}}}
\def\makeScr#1{%
\expandafter\newcommand\csname s#1\endcsname{\mathscr{#1}}}
\edef\y{\@Alph\count@}%
\DeclareMathOperator{\Cat}{Cat}
\DeclareMathOperator{\End}{End}
\DeclareMathOperator{\diag}{diag}
\newcommand{\mone}{\mathbbm{1}}
\newcommand{\tG}{\tilde{G}}
\newcommand{\tDelta}{\tilde{\Delta}}
\newcommand{\hR}{\widehat{R}}
\renewcommand{\cH}{\mathcal{H}}
\newcommand\blfootnote[1]{%
  \begingroup
  \renewcommand\thefootnote{}\footnote{#1}%
  \addtocounter{footnote}{-1}%
  \endgroup
}
\title{Freidel-Maillet type equations on fused K-matrices over the positive part of $U_q(\widehat{\mathfrak{sl}}_2)$}
\author{Chenwei Ruan}
\date{}
\begin{document}
\maketitle

\begin{abstract}
The positive part $U_q^+$ of the quantized enveloping algebra $U_q(\widehat{\mathfrak{sl}}_2)$ has a reflection equation presentation of Freidel-Maillet type (Baseilhac 2022). Its defining K-matrix has size $2 \times 2$ and can be expressed, using Rosso's embedding of $U_q^+$ into a $q$-shuffle algebra, as generating functions whose coefficients are Terwilliger's alternating PBW basis elements. This and older PBW bases of $U_q^+$ due to Damiani and Beck are unified by linear combinations of Catalan words (Ruan 2025). In this paper, we use this unification to define K-matrices of any dimension $\geq 2$ whose entries are explicit generating functions over $U_q^+$. Our main result is that any pair of such K-matrices, possibly of different dimensions, satisfy a Freidel-Maillet type equation. This yields a family of algebraic relations over $U_q^+$ that generalize Baseilhac's equation and can be used to study integrable systems or higher-spin representations. 
\end{abstract}
\blfootnote{{\bf Keywords}. affine quantum group; quantum algebra; q-shuffle algebra; Catalan word; K-matrix; reflection equation. }
\blfootnote{{\bf 2020 Mathematics Subject Classification}. Primary: 17B37. Secondary: 16T25, 81R12.  }

\tableofcontents

\section{Introduction}
\noindent The Yang-Baxter equation and the boundary Yang-Baxter equation appear in quantum integrable systems e.g.\ \cite{BS,cherednik1,faddeev,RSV1}, representation theory e.g.\ \cite{EFK,KKKO,KRS}, and geometry e.g.\ \cite{GKS,MO,nakajima}. The boundary Yang-Baxter equation is also known as the reflection equation. 

\medskip
\noindent The original motivation for quantum groups was to have a representation theoretic framework for R-matrices \cite{drinfeld,jimbo,lusztig2}. Analogously, quantum symmetric pairs are deeply connected to the K-matrices; see e.g.\ \cite{CGM,kolb,noumi,NS}. An R-matrix is a solution to the Yang-Baxter equation, and a K-matrix is a solution to the reflection equation; see \eqref{eq:YBequation}, \eqref{eq:reflectionequation} below. 

\medskip
\noindent We will be working with the quantized enveloping algebra $U_q(\widehat{\mathfrak{sl}}_2)$ \cite{drinfeld,jimbo}. The algebra $U_q(\widehat{\mathfrak{sl}}_2)$ has a subalgebra $U_q^+$, called the positive part \cite{CP,lusztig2}. Both $U_q(\widehat{\mathfrak{sl}}_2)$ and $U_q^+$ are associative, noncommutative, and infinite-dimensional. The algebra $U_q^+$ has a Hall algebra structure \cite{ringel}. The canonical basis and the dual canonical basis for $U_q^+$ were obtained in \cite{lusztig1} and in \cite{leclerc} respectively. The finite-dimensional irreducible representations for $U_q(\widehat{\mathfrak{sl}}_2)$ were classified in \cite{CP}. These representations remain irreducible when restricted to $U_q^+$ by \cite[Theorem 6.2.4]{AV25}. We remark that $U_q^+$ admits additional finite-dimensional irreducible representations that cannot be obtained through such restrictions. 

\medskip
\noindent We now recall the definition for R- and K-matrices. An R-matrix is an $\End(\mC^2 \otimes \mC^2)$-valued formal Laurent series $R(t)$ which satisfies the \textit{Yang-Baxter equation} \cite{baxter,yang}
\begin{equation}\label{eq:YBequation}
R_{12}(t_1/t_2)R_{13}(t_1/t_3)R_{23}(t_2/t_3)=R_{23}(t_2/t_3)R_{13}(t_1/t_3)R_{12}(t_1/t_2). 
\end{equation}

\noindent Here we interpret $R_{12}(t)$ as $R(t) \otimes \mI$, where $\mI$ is the identity in $\End(\mC^2)$. We interpret $R_{13}(t)$ and $R_{23}(t)$ in a similar way. 

\medskip
\noindent In this paper, given an R-matrix $R(t)$, an $\widehat{\text{R}}$-matrix is an $\End(\mC^2 \otimes \mC^2)$-valued formal Laurent series $\hR(t)$ which satisfies 
\begin{equation*}
R_{12}(t_1/t_2)\hR_{13}(t_1/t_3)\hR_{23}(t_2/t_3)=\hR_{23}(t_2/t_3)\hR_{13}(t_1/t_3)R_{12}(t_1/t_2). 
\end{equation*}

\noindent Given an R-matrix and a $\widehat{\text{R}}$-matrix, a K-matrix is an $U_q^+ \otimes \End(\mC^2)$-valued formal Laurent series $K(t)$ which satisfies the \textit{Freidel-Maillet type equation} \cite{cherednik2,FM,KS}
\begin{equation}\label{eq:reflectionequation}
R(t_2/t_1)K_1(t_1)\hR(t_1t_2)K_2(t_2)=K_2(t_2)\hR(t_1t_2)K_1(t_1)R(t_2/t_1). 
\end{equation}

\noindent In \cite{baseilhac1}, Pascal Baseilhac obtained a presentation for $U_q^+$ using a Freidel-Maillet type equation. This equation involves an R-matrix, a $\widehat{\text{R}}$-matrix with scalar entries, and a K-matrix. 

\medskip
\noindent In this paper, we will generalize the notions of the R-, $\widehat{\text{R}}$-, and K-matrices in terms of the underlying field and in terms of the dimension. We will obtain a generalized version of the Freidel-Maillet type equation in \cite{baseilhac1}. 

\medskip
\noindent We now introduce our tool of study. In \cite{rosso1,rosso2}, Rosso constructed an embedding of $U_q^+$ into a $q$-shuffle algebra. In \cite{ter_alternating,ter_catalan,ter_beck}, Terwilliger used the Rosso embedding to obtain closed form for two PBW bases for $U_q^+$ due to Damiani \cite{damiani} and Beck \cite{beck}. He also used the Rosso embedding to obtain the alternating PBW basis for $U_q^+$. As we will see, a closed form for the K-matrix can be obtained using the alternating PBW basis. In \cite{uniform}, Ruan obtained a uniform approach to the three PBW bases mentioned above. In this paper, we will use the uniform approach to obtain a Freidel-Maillet type equation where the R-, $\widehat{\text{R}}$-, and K-matrices are of arbitrary meaningful dimensions. Moreover, the K-matrix can be written in closed form. 

\medskip
\noindent We remark that our approach is motivated by but logically independent of \cite{baseilhac1}. Also, our result is connected to Lusztig's quasi R-matrix from \cite{lusztig2}, thus suggesting possible future work. 

\medskip
\noindent In order to construct R-, $\widehat{\text{R}}$-, and K-matrices of arbitrary dimension, we will use a fusion technique. In the literature, fusion techniques have been developed to construct fused R-matrices and K-matrices of higher dimension; see e.g.\ \cite{karowski,KR} for the fused R-matrices and e.g.\ \cite{MN} for the fused K-matrices. For more recent results, see e.g.\ \cite{BLN,LBG,RSV2}. 

\section{The Rosso embedding and the Freidel-Maillet type equation}
\noindent We first make a few conventions and notations. 

\medskip
\noindent Recall the integers $\mZ=\{0,\pm 1,\pm 2,\ldots\}$ and the natural numbers $\mN=\{0,1,2,\ldots\}$. Let $\mF$ denote a quadratically closed field of characteristic zero. All algebras in this paper are associative, over $\mF$, and have a multiplicative identity. Let $q$ denote a nonzero scalar in $\mF$ that is not a root of unity. For $n \in \mZ$, define 
\begin{equation*}
[n]_q=\frac{q^n-q^{-n}}{q-q^{-1}}. 
\end{equation*}

\noindent We also define the short-hand notation 
\begin{equation*}
c(t)=t-t^{-1}. 
\end{equation*}

\noindent The algebra $U_q^+$ has a presentation with two generators $A,B$ and the \textit{$q$-Serre relations} 
\begin{equation*}\label{eq:qSerre1}
A^3B-[3]_qA^2BA+[3]_qABA^2-BA^3=0, 
\end{equation*}
\begin{equation*}\label{eq:qSerre2}
B^3A-[3]_qB^2AB+[3]_qBAB^2-AB^3=0. 
\end{equation*}

\noindent Our main result is motivated by a presentation for $U_q^+$ of \textit{Freidel-Maillet type}, due to Baseilhac \cite[Theorem 2.10]{baseilhac1}. The main equation of the presentation involves an R-matrix, an $\widehat{\text{R}}$-matrix with scalar entries, and a K-matrix. Let $t$ denote an indeterminate. The R-matrix and the $\widehat{\text{R}}$-matrix are given as follows: 
\begin{equation}\label{eq:R^1/2}
R^{(\frac{1}{2},\frac{1}{2})}(t)=
\begin{pmatrix}
c(qt) & 0 & 0 & 0 \\
0 & c(t) & c(q) & 0 \\
0 & c(q) & c(t) & 0 \\
0 & 0 & 0 & c(qt)
\end{pmatrix}, 
\end{equation}
\begin{equation}\label{eq:hR^1/2}
\hR^{(\frac{1}{2},\frac{1}{2})}=\diag(q^{\frac{1}{2}},q^{-\frac{1}{2}},q^{-\frac{1}{2}},q^{\frac{1}{2}})=q^{2\diag(\frac{1}{2},-\frac{1}{2}) \otimes \diag(\frac{1}{2},-\frac{1}{2})}, 
\end{equation}
where $\diag(~)$ denotes the diagonal matrix with the given diagonal. 

\medskip
\noindent In order to display the K-matrix in closed form, we now recall an embedding of $U_q^+$ into a $q$-shuffle algebra $\mV$ due to Rosso \cite{rosso1,rosso2}. 

\medskip
\noindent We first recall the $q$-shuffle algebra $\mV$. Let $x,y$ denote noncommuting indeterminates. We call $x$ and $y$ \textit{letters}. Let $\mV$ denote the free algebra generated by the letters $x,y$. For $n \in \mN$, the product of $n$ letters is called a \textit{word} of \textit{length} $n$. The word of length $0$ is called \textit{trivial} and denoted by $\mone$. The vector space $\mV$ has a basis consisting of all words; this basis is called \textit{standard}. 

\medskip
\noindent We now equip $\mV$ with another algebra structure, called the \textit{$q$-shuffle algebra} \cite{rosso1,rosso2}. The $q$-shuffle product is denoted by $\star$. We adopt the description by Green \cite{green}. 

\begin{itemize}
\item For $v \in \mV$, 
	\begin{equation*}\label{star1}
	\mone \star v=v \star \mone=v. 
	\end{equation*}
\item For the letters $u,v$, 
	\begin{equation*}\label{star2}
	u \star v=uv+vuq^{\langle u,v \rangle}, 
	\end{equation*}	
	where 
	 \begin{equation*}
	\langle x,x \rangle=\langle y,y \rangle =2, \hspace{4em}\langle x,y \rangle=\langle y,x \rangle=-2.
	\end{equation*}
\item For a letter $u$ and a word $v=v_1v_2 \cdots v_n$ in $\mV$ with $n \geq 2$, 
	\begin{equation*}\label{star3.1}
	u \star v=\sum_{i=0}^n v_1 \cdots v_iuv_{i+1} \cdots v_n q^{\langle u,v_1 \rangle+\cdots+\langle u,v_i \rangle}, 
	\end{equation*}
	\begin{equation*}\label{star3.2}
	v \star u=\sum_{i=0}^n v_1 \cdots v_iuv_{i+1} \cdots v_n q^{\langle u,v_n \rangle+\cdots+\langle u,v_{i+1} \rangle}. 
	\end{equation*}
\item For words $u=u_1u_2 \cdots u_r$ and $v=v_1v_2 \cdots v_s$ in $\mV$ with $r,s \geq 2$, 
	\begin{equation*}\label{star4.1}
	u \star v=u_1((u_2 \cdots u_r) \star v)+v_1(u \star (v_2 \cdots v_s))q^{\langle v_1,u_1 \rangle+\cdots+\langle v_1,u_r \rangle}, 
	\end{equation*}
	\begin{equation*}\label{star4.2}
	u \star v=(u \star (v_1 \cdots v_{s-1}))v_s+((u_1 \cdots u_{r-1}) \star v)u_rq^{\langle u_r,v_1 \rangle+\cdots+\langle u_r,v_s \rangle}. 
	\end{equation*}
\end{itemize}

\noindent By \cite{rosso1,rosso2}, the vector space $\mV$, equipped with the $q$-shuffle product $\star$, becomes an algebra. Moreover, $x,y$ satisfy 
\begin{equation*}
x \star x \star x \star y-[3]_qx \star x \star y \star x+[3]_qx \star y \star x \star x-y \star x \star x \star x=0, 
\end{equation*}
\begin{equation*}
y \star y \star y \star x-[3]_qy \star y \star x \star y+[3]_qy \star x \star y \star y-x \star y \star y \star y=0. 
\end{equation*}

\noindent As a result, there exists an algebra homomorphism $\natural$ from $U_q^+$ to the $q$-shuffle algebra $\mV$ that sends $A \mapsto x$ and $B \mapsto y$. By \cite[Theorem 15]{rosso2} the map $\natural$ is injective. Let $U$ denote the subalgebra of $\mV$ generated by $x,y$ with respect to the $q$-shuffle product. The map $\natural:U_q^+ \to U$ is an algebra isomorphism. Throughout this paper, we identify $U_q^+$ with $U$ via $\natural$. 

\medskip
\noindent We are about to display the K-matrix from \cite{baseilhac1} in closed form. Entries of the K-matrix are generating functions of the following type of words in $U$. 

\begin{definition}\label{def:alt}\rm
(See \cite[Definition 5.2]{ter_alternating}.) We define the following words in $U$. 
\begin{align*}
& W_0=x, && W_{-1}=xyx, && W_{-2}=xyxyx, && W_{-3}=xyxyxyx && \ldots\\
& W_1=y, && W_2=yxy, && W_3=yxyxy, && W_4=yxyxyxy && \ldots\\
& \tG_1=xy, && \tG_2=xyxy, && \tG_3=xyxyxy, && \tG_4=xyxyxyxy && \ldots\\
& G_1=yx, && G_2=yxyx, && G_3=yxyxyx, && G_4=yxyxyxyx && \ldots
\end{align*}

\noindent These words are said to be \textit{alternating}. 
\end{definition}

\noindent In \cite[Section 10]{ter_alternating} it is showed that each of the following form a PBW basis for $U$: 
\begin{itemize}
\item $\{W_{-n}\}_{n=1}^\infty$, $\{W_{n+1}\}$, $\{\tG_n\}_{n=1}^\infty$;
\item $\{W_{-n}\}_{n=1}^\infty$, $\{W_{n+1}\}$, $\{G_n\}_{n=1}^\infty$. 
\end{itemize}

\noindent In addition, the alternating words are connected to the doubly alternating words; see \cite[Section 5]{dalt}. 

\medskip
\noindent For notational convenience, we let $\tG_0=G_0=\mone$. 

\begin{definition}\label{def:altgen}\rm
(See \cite[Definition 9.1]{ter_alternating}.) We define the following generating functions. 
\begin{align*}
W^-(t)&=\sum_{n \in \mN}W_{-n}t^n, & W^+(t)&=\sum_{n \in \mN}W_{n+1}t^n, \\
\tG(t)&=\sum_{n \in \mN}\tG_nt^n, & G(t)&=\sum_{n \in \mN}G_nt^n. 
\end{align*}
\end{definition}

\noindent We are now ready to display the K-matrix from \cite{baseilhac1} in closed form. 

\begin{definition}\label{def:K1/2}\rm
We define the $2 \times 2$ matrix 
\begin{equation}\label{eq:K^1/2}
K^{(\frac{1}{2})}(t)=
\begin{pmatrix}
qtW^-(t^2) & G(t^2) \\
\tG(t^2) & qtW^+(t^2)
\end{pmatrix}. 
\end{equation}
\end{definition}

\begin{remark}\label{rm:baseilhacnotation}\rm
The above matrix can be obtained from the matrix $K(u)$ in \cite[Theorem 2.10]{baseilhac1} up to a scalar multiple via the correspondence \footnote{There is a typo in the equations right above Remark 2.8 of \cite{baseilhac1}: the $G_{k+1}$ and $\tG_{k+1}$ should be exchanged. }
\begin{align*}
U &\mapsto t^{-2}, &&\\
y_{n+1}^+ &\mapsto W_{-n}, & y_{-n}^+ &\mapsto W_{n+1},\\
\widetilde{z}_{n+1}^+ &\mapsto q^{-1}(q^2-q^{-2})G_{n+1}, & z_{n+1}^+ &\mapsto q^{-1}(q^2-q^{-2})\tG_{n+1},\\
\bar{k}_+ &\mapsto q^{-\frac{1}{2}}(q+q^{-1})^{-\frac{1}{2}}(q-q^{-1}), & \bar{k}_- &\mapsto q^{-\frac{1}{2}}(q+q^{-1})^{-\frac{1}{2}}(q-q^{-1}).
\end{align*}

\noindent Here both of $\bar{k}_+,\bar{k}_-$ are mapped to scalars. However in \cite{baseilhac1} it is assumed that $\bar{k}_+\bar{k}_-$ is equal to a fixed nonzero scalar; see \cite[(2.20), (2.31)]{baseilhac1}. This means that one of $\bar{k}_+,\bar{k}_-$ is a free nonzero scalar. We will recover one free nonzero scalar in Appendix B. 
\end{remark}

\noindent By \cite[Propositions 5.7, 5.10, 5.11]{ter_alternating}, the matrices $R^{(\frac{1}{2},\frac{1}{2})}(t), \hR^{(\frac{1}{2},\frac{1}{2})}, K^{(\frac{1}{2})}(t)$ satisfy the following Freidel-Maillet type equation 
\begin{equation}\label{eq:FM1/2}
R^{(\frac{1}{2},\frac{1}{2})}(t/s) \star K^{(\frac{1}{2})}_{1}(s) \star \hR^{(\frac{1}{2},\frac{1}{2})} \star K^{(\frac{1}{2})}_{2}(t)=K^{(\frac{1}{2})}_{2}(t) \star \hR^{(\frac{1}{2},\frac{1}{2})} \star K^{(\frac{1}{2})}_{1}(s) \star R^{(\frac{1}{2},\frac{1}{2})}(t/s). 
\end{equation}

\noindent This result coincides with \cite[(2.33)]{baseilhac1} under the correspondence from Remark \ref{rm:baseilhacnotation}. 

\medskip
\noindent In this paper, we will obtain a more general result involving fused R-, $\widehat{\text{R}}$-, and K-matrices of arbitrary meaningful dimensions. This result will be presented in the next section. 

\section{The general Freidel-Maillet type equation}
\noindent In this section we state our main result, which is the general Freidel-Maillet type equation. In order to do this, we need to define the fused R-, $\widehat{\text{R}}$-, and K-matrices of higher dimensions. For simplicity, from now on we omit the word `fused' unless ambiguity is present. 

\medskip
\noindent We adopt the recursive definition from \cite{LBG} for the R-matrix. 

\begin{definition}\label{def:E}\rm
(See \cite[(3.17)]{LBG}.) For $j  \in \frac{1}{2}\mN^+$, define the $(4j+2) \times (2j+2)$ matrix $\cE^{(j+\frac{1}{2})}$ where all the nonzero entries are given as follows: 
\begin{equation*}
\cE^{(j+\frac{1}{2})}_{(a,a)}=\left(\frac{[2j+2-a]_q}{[2j+1]_q}\right)^{\frac{1}{2}}, \hspace{2em} \cE^{(j+\frac{1}{2})}_{(a+2j+1,a+1)}=\left(\frac{[a]_q}{[2j+1]_q}\right)^{\frac{1}{2}}, 
\end{equation*}
where $1 \leq a \leq 2j+1$. 
\end{definition}

\begin{definition}\label{def:F}\rm
(See \cite[(3.23), (3.24)]{LBG}.) For $j  \in \frac{1}{2}\mN^+$, define the $(2j+2) \times (4j+2)$ matrix $\cF^{(j+\frac{1}{2})}$ where all the nonzero entries are given as follows: 
\begin{equation*}
\cF^{(j+\frac{1}{2})}_{(a,a)}=\frac{([2j+2-a]_q[2j+1]_q)^\frac{1}{2}}{[2j+2-a]_q+[a-1]_q}, \hspace{2em} \cF^{(j+\frac{1}{2})}_{(a+1,a+2j+1)}=\frac{([a]_q[2j+1]_q])^\frac{1}{2}}{[2j+1-a]_q+[a]_q}, 
\end{equation*}
where $1 \leq a \leq 2j+1$. 
\end{definition}

\begin{definition}\label{def:R}\rm
(See \cite[(4.10), (4.32), (4.33)]{LBG}.) For $j_1,j_2 \in \frac{1}{2}\mN^+$, define the $(2j_1+1)(2j_2+1) \times (2j_1+1)(2j_2+1)$ matrix $R^{(j_1,j_2)}(t)$ recursively by 
\begin{equation}\label{eq:Rdef1}
R^{(\frac{1}{2},j_2+\frac{1}{2})}(t)=\cF^{(j_2+\frac{1}{2})}_{23} R^{(\frac{1}{2},j_2)}_{13}(q^{-\frac{1}{2}}t) R^{(\frac{1}{2},\frac{1}{2})}_{12}(q^{j_2}t) \cE^{(j_2+\frac{1}{2})}_{23}, 
\end{equation}
\begin{equation}\label{eq:Rdef2}
R^{(j_1+\frac{1}{2},j_2)}(t)=\cF^{(j_1+\frac{1}{2})}_{12} R^{(\frac{1}{2},j_2)}_{13}(q^{-j_1}t) R^{(j_1,j_2)}_{23}(q^{\frac{1}{2}}t) \cE^{(j_1+\frac{1}{2})}_{12}, 
\end{equation}
where $R^{(\frac{1}{2},\frac{1}{2})}(t)$ is given in \eqref{eq:R^1/2}. 
\end{definition}

\noindent The $\widehat{\text{R}}$-matrix is defined as follows. 

\begin{definition}\label{def:hR}\rm
For $j_1,j_2 \in \frac{1}{2}\mN^+$, define the $(2j_1+1)(2j_2+1) \times (2j_1+1)(2j_2+1)$ diagonal matrix $\hR^{(j_1,j_2)}$ by 
\begin{equation*}
\hR^{(j_1,j_2)}=q^{2\diag(j_1,j_1-1,\ldots,-j_1) \otimes \diag(j_2,j_2-1,\ldots,-j_2)}. 
\end{equation*}
\end{definition}

\noindent Clearly Definitions \ref{def:R}, \ref{def:hR} are compatible with \eqref{eq:R^1/2}, \eqref{eq:hR^1/2}. 

\noindent As we will see in Section 5, the $\widehat{\text{R}}$-matrix is related to the R-matrix and holds many properties similar to those of the R-matrix. 

\medskip
\noindent Next we will define the K-matrix. The definition depends on a certain type of word in $U$, said to be Catalan. 

\begin{definition}\label{def:Cat}\rm
(See \cite[Definition 1.3]{ter_catalan}.) Let $\overline{x}=1$ and $\overline{y}=-1$. A word $a_1a_2 \cdots a_n$ is \textit{Catalan} whenever $\overline{a}_1+\overline{a}_2+\cdots+\overline{a}_i \geq 0$ for $1 \leq i \leq n-1$ and $\overline{a}_1+\overline{a}_2+\cdots+\overline{a}_n=0$. The length of a Catalan word is even. For $n \in \mN$, we denote the collection of Catalan words of length $2n$ by $\Cat_n$. 
\end{definition}

\begin{example}\label{ex:Catn}\rm
We list the Catalan words of length $\leq 6$. 
\begin{equation*}
\mone, \hspace{4em}xy, \hspace{4em}xyxy, \hspace{1em}xxyy, 
\end{equation*}
\begin{equation*}
xyxyxy, \hspace{1em}xxyyxy, \hspace{1em}xyxxyy, \hspace{1em}xxyxyy, \hspace{1em}xxxyyy. 
\end{equation*}
\end{example}

\noindent For notational convenience, for $n \in \mN$ define 
\begin{equation*}
[n]_q^!=[n]_q[n-1]_q \cdots [1]_q. 
\end{equation*}

\noindent By convention, $[0]_q^!=1$. 

\begin{definition}\label{def:D^m_n}\rm
(See \cite[Definitions 4.1, 4.6, 10.5]{uniform}.) For $m \in \mZ$ and $n \in \mN$, define 
\begin{equation*}
\Delta^{(m)}_n=\sum_{a_1 \cdots a_{2n} \in \Cat_n}\prod_{i=1}^{2n}[\overline{a}_1+\overline{a}_2+\cdots+\overline{a}_{i-1}+m(\overline{a}_i+1)/2]_q ~ a_1 \cdots a_{2n}. 
\end{equation*}
We remark that $\Delta^{(m)}_0=\mone$. 

\begin{proposition}\label{prop:DinU}\rm
For $m \in \mZ$ and $n \in \mN$, the element $\Delta^{(m)}_n$ lies in $U$. 
\end{proposition}
\begin{proof}
By \cite[Theorem 2.25 and Lemma 4.10]{uniform}, each $\Delta^{(m)}_n$ is a polynomial in elements of $U$ and thus lies in $U$. 
\end{proof}

\noindent For $m \in \mZ$, define the generating function 
\begin{equation*}
\Delta^{(m)}(t)=\sum_{n \in \mN}\Delta^{(m)}_nt^n. 
\end{equation*}
\end{definition}

\noindent The generating function $\Delta^{(m)}(t)$ plays a key role in a uniform approach to the three PBW bases for $U_q^+$ due to Damiani, Beck, and Terwilliger; see \cite{uniform}. We remark that the PBW basis due to Terwilliger consists of alternating words, as mentioned under Definition \ref{def:alt}. 

\medskip
\noindent For notational convenience, we make the following definition. 

\begin{definition}\label{def:-1}\rm
(See \cite[Lemma 4.3]{PT}.) For $n \geq 1$ and a word $w=a_1a_2 \cdots a_n$, define 
\begin{equation*}
x^{-1}w=
\begin{cases}
a_2a_3 \cdots a_n,&\hspace{1em}\text{ if }a_1=x;\\
0,&\hspace{1em}\text{ if }a_1=y. 
\end{cases}
\end{equation*}

\noindent By convention, $x^{-1}\mone=0$. 

\medskip 
\noindent We also define $y^{-1}w$, $wx^{-1}$, $wy^{-1}$ in a similar way. 

\medskip
\noindent We extend the above definitions linearly to all of $\mV$ and to generating functions on $\mV$. 
\end{definition}

\noindent Now we are ready to define the K-matrix in closed form. 

\begin{definition}\label{def:K}\rm
For $j \in \frac{1}{2}\mN^+$, define the $(2j+1) \times (2j+1)$ matrix $K^{(j)}(t)$ with the $(a,b)$-entry given by  
\begin{equation}\label{eq:K}
K^{(j)}_{(a,b)}(t)=\varphi(a,b,j)t^{a-b-2j}x^{1-b}\Delta^{(-2j)}(-t^2)y^{a-2j-1}, 
\end{equation}
where 
\begin{equation}\label{eq:f}
\varphi(a,b,j)=q^{\rho(a,b,j)}\left([2j]_q^!\right)^{-1}\left(\frac{[a-1]_q^![2j+1-b]_q^!}{[b-1]_q^![2j+1-a]_q^!}\right)^\frac{1}{2}
\end{equation}
and 
\begin{equation}\label{eq:h}
\rho(a,b,j)=(a^2+b^2+6j^2+4ab-6aj-6bj-6a-6b+13j+6)/2.  
\end{equation}
\end{definition}

\noindent By \cite[Proposition 9.3]{PT}, for $j \in \frac{1}{2}\mN^+$ each entry of $K^{(j)}(t)$ is a generating function whose coefficients lie in $U$. 

\medskip
\noindent One can routinely verify that Definition \ref{def:K} is compatible with \eqref{eq:K^1/2}. 

\medskip
\noindent Our main result is the following Freidel-Maillet type equation. 

\begin{theorem}\label{thm:RKRK}\rm
For $j_1,j_2 \in \frac{1}{2}\mN^+$, 
\begin{equation}\label{eq:RKRK}
R^{(j_1,j_2)}(t/s) \star K^{(j_1)}_{1}(s) \star \hR^{(j_1,j_2)} \star K^{(j_2)}_{2}(t)=K^{(j_2)}_{2}(t) \star \hR^{(j_1,j_2)} \star K^{(j_1)}_{1}(s) \star R^{(j_1,j_2)}(t/s). 
\end{equation}
\end{theorem}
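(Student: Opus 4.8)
The plan is to prove the general Freidel-Maillet equation \eqref{eq:RKRK} by induction on $j_1+j_2$, using the base case \eqref{eq:FM1/2} (which holds by \cite[Propositions 5.7, 5.10, 5.11]{ter_alternating}) together with the recursive structure of the fused R-matrix in Definition \ref{def:R} and a corresponding fusion property of the fused K-matrix. The first task is to establish a \emph{fusion relation} for $K^{(j)}(t)$ analogous to \eqref{eq:Rdef1}--\eqref{eq:Rdef2}: namely, that $K^{(j+\frac{1}{2})}(t)$ can be written (up to scalar normalization absorbed into $\varphi$) as $\cF^{(j+\frac{1}{2})}\bigl(K^{(\frac12)}_1(q^{j}t)\star \hR^{(\frac12,j)}\star K^{(j)}_2(q^{-\frac12}t)\bigr)\cE^{(j+\frac{1}{2})}$, or a left-handed variant thereof. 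I would verify this directly from the closed form \eqref{eq:K}: the entries of $K^{(j)}(t)$ are the single generating function $\Delta^{(-2j)}(-t^2)$ bracketed by deletions $x^{1-b}(\,\cdot\,)y^{a-2j-1}$ and weighted by $\varphi(a,b,j)$, so the content of the fusion relation reduces to a \emph{recursion} $\Delta^{(-2j-1)}(-t^2)$ in terms of $\Delta^{(-2j)}$ and $\Delta^{(-1)}$ (i.e. the shift $m\mapsto m-1$ in Definition \ref{def:D^m_n}) combined with the explicit $q$-binomial coefficients appearing in $\cE,\cF$. This is exactly the kind of identity the uniform approach \cite{uniform} is built to produce, and I expect the relevant recursion for $\Delta^{(m)}(t)$ is either stated in \cite{uniform} or follows from the Catalan-word combinatorics by splitting a Catalan word at its first return to height $0$.

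Granting the K-matrix fusion relation, the inductive step is then formal. Assume \eqref{eq:RKRK} for the pairs $(\tfrac12,j_2)$ and $(j_1,j_2)$ (the latter with $j_1$ smaller in the relevant sense). To get $(j_1+\tfrac12,j_2)$, I substitute the recursion \eqref{eq:Rdef2} for $R^{(j_1+\frac12,j_2)}$ and the fusion relation for $K^{(j_1+\frac12)}$ into both sides of \eqref{eq:RKRK}. After this substitution both sides become products over the auxiliary spaces that now carry $\cE^{(j_1+\frac{1}{2})},\cF^{(j_1+\frac{1}{2})}$ on one tensor factor and products of the form $R^{(\frac12,j_2)}$, $R^{(j_1,j_2)}$, $\hR$, $K^{(\frac12)}$, $K^{(j_2)}$, $K^{(j_1)}$ in the inner slots. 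The goal is to move the $\cF$'s and $\cE$'s to the outside and to recognize the inner expression as an instance of the two induction hypotheses applied in a four-space configuration. This requires (a) the locality/commutation of matrices acting on disjoint tensor factors, (b) the intertwining identities $\cE\cF=\mathrm{id}$ (or the projector identity) and the "hook" relations between $\cE,\cF$ and $R^{(\frac12,\frac12)},\hR^{(\frac12,\frac12)}$ that make the fusion consistent — these are precisely the properties used in \cite{LBG} to show $R^{(j_1,j_2)}$ is well defined and satisfies Yang-Baxter, and I would cite/re-derive the handful I need, and (c) the compatibility of $\hR^{(j_1+\frac12,j_2)}$ with $\hR^{(\frac12,j_2)},\hR^{(j_1,j_2)}$ under fusion, which is immediate from Definition \ref{def:hR} since $\diag(j_1+\tfrac12,\dots,-j_1-\tfrac12)$ decomposes through the branching $\tfrac12\otimes j_1 \supset j_1\pm\tfrac12$ realized by $\cE^{(j_1+\frac{1}{2})}$. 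The symmetric argument handles the step $(j_1,j_2)\to(j_1,j_2+\tfrac12)$ using \eqref{eq:Rdef1}. Since every $j\in\frac12\mN^+$ is reached from $\frac12$ by such half-integer steps, induction closes.

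There is a bookkeeping subtlety worth isolating: all of $R,\hR,\cE,\cF$ have scalar ($\mF$-valued) entries, whereas $K^{(j)}(t)$ has entries in $U\subseteq\mV$, and all products on the auxiliary spaces are taken in the $q$-shuffle algebra (the $\star$ in \eqref{eq:RKRK}). Because $\mathbb F$-scalars are central for $\star$ and because matrices on disjoint tensor slots multiply "externally" (their $\star$-product is just the scalar-weighted concatenation on the overlapping slot), the standard fusion manipulations of \cite{LBG} carry over verbatim; I would state this once as a lemma (matrices over $(\mV,\star)$ acting on different tensor factors satisfy the usual RTT-calculus relations, and scalar matrices behave as over any commutative ring) and then use it freely. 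The one place where genuine $q$-shuffle computation enters is verifying the K-matrix fusion relation itself, i.e. the $\Delta^{(m)}$ recursion and the matching of the scalar prefactors $\varphi(a,b,j)$ — tracking the quadratic exponent $\rho(a,b,j)$ in \eqref{eq:h} against the $q$-powers produced by $\hR^{(\frac12,j)}$ and by the entries of $\cE,\cF$ is where sign and normalization errors would hide.

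\textbf{Main obstacle.} I expect the crux to be the K-matrix fusion relation, specifically proving the recursion relating $\Delta^{(-2j-1)}$ to $\Delta^{(-2j)}$ and $\Delta^{(-1)}$ and reconciling it with the exact closed forms of $\cE^{(j+\frac{1}{2})},\cF^{(j+\frac{1}{2})}$ and the prefactor $\varphi$; the tensor-calculus induction on top of it should then be comparatively routine, modulo careful indexing.
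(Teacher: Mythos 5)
Your proposal follows essentially the same route as the paper: the same K-matrix fusion relation $K^{(j+\frac{1}{2})}(t)=\cF^{(j+\frac{1}{2})} \star K^{(\frac{1}{2})}_{1}(q^{j}t) \star \hR^{(\frac{1}{2},j)} \star K^{(j)}_{2}(q^{-\frac{1}{2}}t) \star \cE^{(j+\frac{1}{2})}$ proved via a recursion for $\Delta^{(-2j-1)}(-t^2)$ (the paper's Proposition 6.10 and Lemma 6.9, the latter obtained from the factorization of $\Delta^{(-m-1)}$ into $\tG$-factors in the uniform approach), followed by the same half-integer-step induction using $\cF\cE=\mI$, the projector-for-degenerate-R-matrix trade, the $\hR$ fusion and mixed $R$--$\hR$ Yang--Baxter relations, and the scalar-centrality bookkeeping you flag. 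The only small omission is that the inductive step for $(j_1,j_2)\to(j_1+\tfrac12,j_2)$ also requires the already-established case $(\tfrac12,j_1)$ (to push $K^{(\frac12)}_1\hR^{(\frac12,j_1)}K^{(j_1)}_2$ past the degenerate $R^{(\frac12,j_1)}(q^{j_1+\frac12})$ replacing the projector), which is available in the order of induction but absent from your list of hypotheses.
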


\noindent We will get another Freidel-Maillet type equation as a corollary. 

\begin{corollary}\label{cor:KRKR}\rm
For $j_1,j_2 \in \frac{1}{2}\mN^+$, 
\begin{equation}\label{eq:KRKR}
K^{(j_1)}_{1}(s) \star \hR^{(j_1,j_2)} \star K^{(j_2)}_{2}(t) \star R^{(j_1,j_2)}(s/t)=R^{(j_1,j_2)}(s/t) \star K^{(j_2)}_{2}(t) \star \hR^{(j_1,j_2)} \star K^{(j_1)}_{1}(s). 
\end{equation}
\end{corollary}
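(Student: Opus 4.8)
The plan is to derive \eqref{eq:KRKR} from the main theorem \eqref{eq:RKRK} by a purely formal manipulation, using only invertibility of the $R$-matrix and the commutativity of the algebra factor $U_q^+$ with the matrix factors. First I would observe that the three objects $R^{(j_1,j_2)}(t/s)$, $\hR^{(j_1,j_2)}$ live in $\End(\mathbb{C}^{2j_1+1}\otimes\mathbb{C}^{2j_2+1})$ (with scalar, i.e.\ $\mF$-valued, entries), while $K^{(j_1)}_1(s)$ and $K^{(j_2)}_2(t)$ have entries in $U_q^+$ placed in tensor slots $1$ and $2$ respectively; hence every factor in \eqref{eq:RKRK} commutes with the scalar matrices, and the $\star$-product is just matrix multiplication with the $U_q^+$-valued entries multiplied in $U_q^+$. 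The key structural input I need is that $R^{(j_1,j_2)}(z)$ is invertible as a matrix over the field of rational functions in $z$ (equivalently, as a formal Laurent series in a suitable variable); this should follow from the recursive Definition~\ref{def:R} together with the invertibility of $R^{(\frac12,\frac12)}(z)$, whose determinant is a nonzero scalar times a power of $c(qz)^2-c(q)^2$ as one reads off \eqref{eq:R^1/2}, and the fact that $\cE^{(j+\frac12)},\cF^{(j+\frac12)}$ realize a split of the identity on the relevant subspace.

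Granting this, the derivation runs as follows. Starting from \eqref{eq:RKRK} with the roles of $s$ and $t$ interchanged (i.e.\ swap $t\leftrightarrow s$ throughout), I get
\[
R^{(j_1,j_2)}(s/t) \star K^{(j_1)}_{1}(t) \star \hR^{(j_1,j_2)} \star K^{(j_2)}_{2}(s)=K^{(j_2)}_{2}(s) \star \hR^{(j_1,j_2)} \star K^{(j_1)}_{1}(t) \star R^{(j_1,j_2)}(s/t).
\]
That is not quite what I want, because in \eqref{eq:KRKR} the outer $K$ is $K^{(j_1)}_1$ at spectral parameter $s$ and the inner one is $K^{(j_2)}_2$ at $t$. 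So instead I would go back to \eqref{eq:RKRK} as stated and left-multiply (in the $\star$-sense, i.e.\ ordinary matrix multiplication over the tensor-product space with $U_q^+$-valued entries) both sides by the inverse $\big(R^{(j_1,j_2)}(t/s)\big)^{-1}$, obtaining
\[
K^{(j_1)}_{1}(s) \star \hR^{(j_1,j_2)} \star K^{(j_2)}_{2}(t)=\big(R^{(j_1,j_2)}(t/s)\big)^{-1}\star K^{(j_2)}_{2}(t) \star \hR^{(j_1,j_2)} \star K^{(j_1)}_{1}(s) \star R^{(j_1,j_2)}(t/s).
\]
Then I would right-multiply both sides by $R^{(j_1,j_2)}(s/t)$ — wait, the cleaner route is: having isolated the left-hand side of \eqref{eq:KRKR} up to the leftover $R$-factors, I need the identification $\big(R^{(j_1,j_2)}(t/s)\big)^{-1}=R^{(j_1,j_2)}(s/t)$ up to a scalar. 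This unitarity-type relation is exactly the kind of property promised in Section~5 (``the $\widehat{\text R}$-matrix holds many properties similar to those of the $R$-matrix''); assuming $R^{(j_1,j_2)}(z)R^{(j_1,j_2)}(z^{-1})$ is a scalar matrix (with the scalar a symmetric function of $z,z^{-1}$, so it cancels from both sides of any equation homogeneous in it), the displayed equation becomes \eqref{eq:KRKR} after right-multiplying by $R^{(j_1,j_2)}(s/t)$ and using that the scalar commutes past everything.

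The main obstacle is pinning down the precise invertibility/unitarity statement for the fused $R$-matrix in the form I need — namely that $R^{(j_1,j_2)}(z)$ is invertible over $\mF(z)$ and that $R^{(j_1,j_2)}(z)\,R^{(j_1,j_2)}(z^{-1})$ equals a scalar function of $z$ times the identity. For the base case $j_1=j_2=\tfrac12$ this is an immediate $4\times4$ check from \eqref{eq:R^1/2}; for the general case one inducts through \eqref{eq:Rdef1}, \eqref{eq:Rdef2}, where the potential subtlety is that $\cE$ and $\cF$ are rectangular, so one must verify that $\cF^{(j+\frac12)}\cE^{(j+\frac12)}$ is (a scalar multiple of) the identity on the $(2j+2)$-dimensional space — a direct computation from Definitions~\ref{def:E}, \ref{def:F} using $[2j+2-a]_q+[a-1]_q$ telescoping appropriately — and that the image of $\cE^{(j+\frac12)}$ is an invariant subspace on which the middle product of $R$'s acts invertibly. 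If Section~5 already records these facts, the corollary is then a two-line consequence; if not, I would state the needed lemma (invertibility plus $R(z)R(z^{-1})\in\mF(z)\cdot\mathrm{Id}$) explicitly and relegate its routine inductive proof to where the other $R$-matrix properties are established, so that the proof of Corollary~\ref{cor:KRKR} itself is just: take \eqref{eq:RKRK}, multiply on the left by $R^{(j_1,j_2)}(t/s)^{-1}=\text{(scalar)}\cdot R^{(j_1,j_2)}(s/t)$, multiply on the right by $R^{(j_1,j_2)}(s/t)$, cancel the common scalar, and rearrange.
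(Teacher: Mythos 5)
Your proposal is correct and follows essentially the same route as the paper: multiply \eqref{eq:RKRK} on both sides by $R^{(j_1,j_2)}(s/t)$ and cancel the resulting nonzero scalar using the unitarity relation, which the paper already records as Lemma \ref{lem:RR} (quoted from \cite[(4.46)]{LBG}), with $\cF^{(j+\frac{1}{2})}\cE^{(j+\frac{1}{2})}=\mI$ likewise available as \eqref{eq:FE}. The auxiliary facts you worried about establishing are thus all in place, and your two-line derivation matches the paper's proof.
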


\begin{remark}\label{rm:quasiR}\rm
A Freidel-Maillet type equation of the same form as \eqref{eq:KRKR} may be produced from Lusztig's quasi R-matrix using properties of the universal R-matrix; see Appendix C. Our approach is of independent interest as it gives a closed form for the K-matrix and uses results only from the $q$-shuffle algebra. This yields a family of explicit algebraic relations over $U_q^+$ that generalize \eqref{eq:FM1/2} and may contribute to the study of integrable systems or higher‑spin representations. 
\end{remark}

\noindent For the rest of this paper, we will prove Theorem \ref{thm:RKRK} and Corollary \ref{cor:KRKR}. We will use a fusion technique analogous to that of \cite{LBG}. Our proof strategy is as follows. In Section 4, we recall some known properties of the R-matrix. In Section 5, we show some properties of the $\widehat{\text{R}}$-matrix. In Section 6, we obtain a recurrence relation for the K-matrix that corresponds to a fusion technique. In Section 7, we prove Theorem \ref{thm:RKRK} and Corollary \ref{cor:KRKR} using the results from Sections 4, 5, 6. 

\section{Some known properties of the R-matrix}
\noindent In this section, we recall some known properties of the R-matrix $R^{(j_1,j_2)}(t)$ that will be used later. 

\begin{definition}\label{def:H}\rm
(See \cite[(3.44)]{LBG}.) For $j \in \frac{1}{2}\mN^+$, define the $(2j+2) \times (2j+2)$ diagonal matrix $\cH^{(j+\frac{1}{2})}$ with the $(a,a)$-entry given by 
\begin{equation*}
\cH^{(j+\frac{1}{2})}_{(a,a)}=c(q)c(q^2) \cdots c(q^{2j})([2j+2-a]_q+[a-1]_q), 
\end{equation*}
where $1 \leq a \leq 2j+2$. 
\end{definition}

\noindent The following four Lemmas \ref{lem:EFH}--\ref{lem:RRR} are already proved in \cite{LBG}. Here we simply restate them without proof. 

\begin{lemma}\label{lem:EFH}\rm
(See \cite[(3.15), (3.43), (3.45), (3.46), (3,47)]{LBG}.) For $j \in \frac{1}{2}\mN^+$, \\
\begin{equation}\label{eq:FE}
\cF^{(j+\frac{1}{2})} \cE^{(j+\frac{1}{2})}=\mI_{2j+2}, 
\end{equation}
\begin{equation}\label{eq:EHF}
R^{(\frac{1}{2},j)}(q^{j+\frac{1}{2}})=\cE^{(j+\frac{1}{2})} \cH^{(j+\frac{1}{2})} \cF^{(j+\frac{1}{2})}, 
\end{equation}
\begin{equation}\label{eq:RE}
R^{(\frac{1}{2},j)}(q^{j+\frac{1}{2}}) \cE^{(j+\frac{1}{2})}=\cE^{(j+\frac{1}{2})} \cH^{(j+\frac{1}{2})}, 
\end{equation}
\begin{equation}\label{eq:FR}
\cF^{(j+\frac{1}{2})} R^{(\frac{1}{2},j)}(q^{j+\frac{1}{2}})=\cH^{(j+\frac{1}{2})} \cF^{(j+\frac{1}{2})}, 
\end{equation}
\begin{equation}\label{eq:EFR}
R^{(\frac{1}{2},j)}(q^{j+\frac{1}{2}})=\cE^{(j+\frac{1}{2})} \cF^{(j+\frac{1}{2})} R^{(\frac{1}{2},j)}(q^{j+\frac{1}{2}}). 
\end{equation}
\end{lemma}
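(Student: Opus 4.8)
The plan is to observe that three of the five identities are purely formal consequences of the other two, to dispatch \eqref{eq:FE} by a short entrywise computation, and to isolate \eqref{eq:EHF} as the one genuinely substantive claim, which I would prove by induction on $j$ using the fusion recurrence \eqref{eq:Rdef1}.

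First I would record the algebraic reductions. Abbreviate $R=R^{(\frac{1}{2},j)}(q^{j+\frac{1}{2}})$ and suppress the superscripts on $\cE,\cF,\cH$. Granting \eqref{eq:FE}, that $\cF\cE=\mI_{2j+2}$, and \eqref{eq:EHF}, that $R=\cE\cH\cF$, the remaining three follow at once:
\begin{align*}
R\cE&=\cE\cH\cF\cE=\cE\cH, & \cF R&=\cF\cE\cH\cF=\cH\cF, & \cE\cF R&=\cE\cF\cE\cH\cF=\cE\cH\cF=R,
\end{align*}
which are precisely \eqref{eq:RE}, \eqref{eq:FR}, and \eqref{eq:EFR}. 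Thus the entire lemma rests on \eqref{eq:FE} and \eqref{eq:EHF}, and I would present the proof in that order.

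For \eqref{eq:FE} I would multiply out $\cF\cE$ using the sparse supports recorded in Definitions \ref{def:E} and \ref{def:F}. The rows of $\cF$ are supported on indices $\{r,r+2j\}$ and the columns of $\cE$ on indices $\{c,c+2j\}$, so for $1\le r,c\le 2j+1$ a common support index forces $r=c$, and the two extreme indices $1$ and $2j+2$ contribute only a single entry equal to $1$ (since $[0]_q=0$ and the relevant square roots reduce to $\sqrt{[2j+1]_q/[2j+1]_q}$). Hence $\cF\cE$ is diagonal, and on the middle indices the diagonal entry telescopes,
\begin{equation*}
(\cF\cE)_{(c,c)}=\frac{[2j+2-c]_q}{[2j+2-c]_q+[c-1]_q}+\frac{[c-1]_q}{[2j+2-c]_q+[c-1]_q}=1,
\end{equation*}
giving $\cF\cE=\mI_{2j+2}$. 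This step is routine.

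The main content is \eqref{eq:EHF}, which I would prove by induction on $j\in\frac{1}{2}\mN^+$. For the base case $j=\frac{1}{2}$ one specializes \eqref{eq:R^1/2} at $t=q$: the central $2\times 2$ block becomes $c(q)\bigl(\begin{smallmatrix}1&1\\1&1\end{smallmatrix}\bigr)$, which is rank one, and one checks directly that the resulting $4\times 4$ matrix equals $\cE^{(1)}\cH^{(1)}\cF^{(1)}$. For the inductive step I would evaluate \eqref{eq:Rdef1} (with $j_2=j$) at $t=q^{j+1}$, so that the inner factor becomes $R^{(\frac{1}{2},j)}_{13}(q^{-\frac{1}{2}}\cdot q^{j+1})=R^{(\frac{1}{2},j)}_{13}(q^{j+\frac{1}{2}})$, which sits exactly at the spin-$\tfrac12\times$spin-$j$ fusion point; by the inductive hypothesis it factors as $\cE^{(j+\frac{1}{2})}_{13}\cH^{(j+\frac{1}{2})}_{13}\cF^{(j+\frac{1}{2})}_{13}$. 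Substituting this and collapsing the product using \eqref{eq:FE} among the fusion matrices on the various tensor factors should yield $\cE^{(j+1)}\cH^{(j+1)}\cF^{(j+1)}$. The hard part will be the tensor-factor bookkeeping in this collapse: the inductive hypothesis produces a factorization on factors $1,3$, whereas the recurrence carries its own $\cE,\cF$ on factors $2,3$, so one must verify the coherence of building the spin-$(j+1)$ fusion maps out of the two elementary fusions in a manner independent of order. As an alternative that sidesteps this coherence issue, one could instead derive the explicit weight-block form of $R^{(\frac{1}{2},j)}(t)$ (a standard spin-$\tfrac12\times$spin-$j$ R-matrix), observe that at $t=q^{j+\frac{1}{2}}$ each $2\times 2$ weight block degenerates to rank one, and match it against $\cE\cH\cF$ block by block, with $\cH$ recording the surviving eigenvalue; this trades the coherence bookkeeping for an explicit but finite per-block verification. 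Either way, \eqref{eq:EHF} is the crux, and the rest of the lemma is formal.
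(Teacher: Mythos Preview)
The paper gives no proof here: Lemma~\ref{lem:EFH} is quoted from \cite{LBG} and the text explicitly says it is restated without proof. Your proposal is therefore an independent argument, not a variant of the paper's.

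Your reductions of \eqref{eq:RE}, \eqref{eq:FR}, \eqref{eq:EFR} to \eqref{eq:FE} and \eqref{eq:EHF} are correct and clean, and the entrywise verification of \eqref{eq:FE} is right. For \eqref{eq:EHF}, the inductive scheme via \eqref{eq:Rdef1} is natural, but as you yourself flag, the coherence of the two nested fusion maps (on factors $1,3$ from the inductive hypothesis versus on factors $2,3$ from the recurrence) is not worked out, and that is a genuine gap in the argument as it stands. Your alternative route---deriving the explicit block form of $R^{(\frac{1}{2},j)}(t)$ and specializing at $t=q^{j+\frac{1}{2}}$---is sound and in fact supported within the paper: Proposition~\ref{prop:R^1/2} in Appendix~A supplies exactly this closed form, from which the rank-one degeneration of each $2\times 2$ weight block and the match with $\cE^{(j+\frac{1}{2})}\cH^{(j+\frac{1}{2})}\cF^{(j+\frac{1}{2})}$ is a finite entrywise check. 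That path is complete; the inductive one, as written, is not.
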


\begin{lemma}\label{lem:Rrecur}\rm
(See \cite[(4.32) and Lemma 5.9]{LBG}.) For $j_1,j_2 \in \frac{1}{2}\mN^+$, we have the following fusion equalities on the R-matrices \\
\begin{equation}\label{eq:Rrecur1}
R^{(j_1+\frac{1}{2},j_2)}(t)=\cF^{(j_1+\frac{1}{2})}_{12} R^{(\frac{1}{2},j_2)}_{13}(q^{-j_1}t) R^{(j_1,j_2)}_{23}(q^{\frac{1}{2}}t) \cE^{(j_1+\frac{1}{2})}_{12}, 
\end{equation}
\begin{equation}\label{eq:Rrecur2}
R^{(j_1+\frac{1}{2},j_2)}(t)=\cF^{(j_1+\frac{1}{2})}_{12} R^{(j_1,j_2)}_{23}(q^{-\frac{1}{2}}t) R^{(\frac{1}{2},j_2)}_{13}(q^{j_1}t) \cE^{(j_1+\frac{1}{2})}_{12}, 
\end{equation}
\begin{equation}\label{eq:Rrecur3}
R^{(j_1,j_2+\frac{1}{2})}(t)=\cF^{(j_2+\frac{1}{2})}_{23} R^{(j_1,\frac{1}{2})}_{12}(q^{-j_2}t) R^{(j_1,j_2)}_{13}(q^{\frac{1}{2}}t) \cE^{(j_2+\frac{1}{2})}_{23}, 
\end{equation}
\begin{equation}\label{eq:Rrecur4}
R^{(j_1,j_2+\frac{1}{2})}(t)=\cF^{(j_2+\frac{1}{2})}_{23} R^{(j_1,j_2)}_{13}(q^{-\frac{1}{2}}t) R^{(j_1,\frac{1}{2})}_{12}(q^{j_2}t) \cE^{(j_2+\frac{1}{2})}_{23}. 
\end{equation}
\end{lemma}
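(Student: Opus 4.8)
\medskip
\noindent The plan is to deduce the four identities of this lemma from the two defining recursions \eqref{eq:Rdef1}, \eqref{eq:Rdef2}, the contraction and factorization relations \eqref{eq:FE}--\eqref{eq:EFR} of Lemma \ref{lem:EFH}, and the Yang--Baxter equation \eqref{eq:YBequation} for $R^{(\frac{1}{2},\frac{1}{2})}$, essentially reproducing the argument of \cite{LBG}. First I would note that \eqref{eq:Rrecur1} is word-for-word the recursion \eqref{eq:Rdef2} of Definition \ref{def:R}, so there is nothing to do there; the genuine content is (a) reversing the order of the two R-matrix factors inside a fusion sandwich $\cF_{12}(\,\cdot\,)\cE_{12}$ or $\cF_{23}(\,\cdot\,)\cE_{23}$, and (b) promoting the ``fuse the right-hand tensor leg'' recursion \eqref{eq:Rdef1}, which in Definition \ref{def:R} only treats first spin $\frac{1}{2}$, to arbitrary first spin $j_1$.

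\medskip
\noindent The key auxiliary ingredient I would isolate first is the mixed-spin Yang--Baxter equation: for half-integers $j_1,j_2$ and appropriately $q$-shifted spectral parameters $u_1,u_2,u_3$,
\begin{equation*}
R^{(\frac{1}{2},j_1)}_{12}(u_1)\,R^{(\frac{1}{2},j_2)}_{13}(u_2)\,R^{(j_1,j_2)}_{23}(u_3)=R^{(j_1,j_2)}_{23}(u_3)\,R^{(\frac{1}{2},j_2)}_{13}(u_2)\,R^{(\frac{1}{2},j_1)}_{12}(u_1),
\end{equation*}
together with its variants under permuting the roles of the three tensor legs. This is Lemma \ref{lem:RRR}, proved by induction on the spins: peel a spin-$\frac{1}{2}$ strand off one leg using \eqref{eq:Rdef1} or \eqref{eq:Rdef2}, apply the lower-spin Yang--Baxter relation to the interior, and collapse the resulting pair of intertwiners through $\cF\cE=\mI$; the base case is \eqref{eq:YBequation}. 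Granting this, my strategy for (a) is to feed the mixed Yang--Baxter equation a third R-matrix, $R^{(\frac{1}{2},j_1)}_{12}$ in the case of \eqref{eq:Rrecur2}, evaluated at its fusion point $q^{j_1+\frac{1}{2}}$; by \eqref{eq:EHF} this factor equals $\cE^{(j_1+\frac{1}{2})}_{12}\cH^{(j_1+\frac{1}{2})}_{12}\cF^{(j_1+\frac{1}{2})}_{12}$, so that after left- and right-multiplying by $\cF^{(j_1+\frac{1}{2})}_{12}$ and $\cE^{(j_1+\frac{1}{2})}_{12}$ and cancelling with \eqref{eq:FE}, the factor order $R_{13}R_{23}$ of \eqref{eq:Rrecur1} is converted into the order $R_{23}R_{13}$ of \eqref{eq:Rrecur2}, the spectral shifts being those dictated by the Yang--Baxter constraint together with the gauge matrix $\cH^{(j_1+\frac{1}{2})}$. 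For (b), i.e.\ \eqref{eq:Rrecur3}, I would induct on $j_1$: the base case $j_1=\frac{1}{2}$ is \eqref{eq:Rdef1} subjected to the same order-reversal move (now contracting an $R^{(\frac{1}{2},j_2)}_{13}$ or $R^{(\frac{1}{2},\frac{1}{2})}_{12}$ factor at its fusion point), and for the step $j_1\mapsto j_1+\frac{1}{2}$ I would expand $R^{(j_1+\frac{1}{2},\,j_2+\frac{1}{2})}(t)$ by \eqref{eq:Rdef2}, rewrite its interior $R^{(j_1,\,j_2+\frac{1}{2})}$ by the induction hypothesis and its interior $R^{(\frac{1}{2},\,j_2+\frac{1}{2})}$ by \eqref{eq:Rdef1}, interchange the two fusion operations $\cE^{(j_1+\frac{1}{2})}$ and $\cE^{(j_2+\frac{1}{2})}$, which act on disjoint leg-pairs, and eliminate the spin-$\frac{1}{2}$ strand shared by the two fusions via another mixed Yang--Baxter move and \eqref{eq:FE}. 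Reassembling yields \eqref{eq:Rrecur3}, and then \eqref{eq:Rrecur4} follows from \eqref{eq:Rrecur3} by one further order reversal.

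\medskip
\noindent The hard part will be the bookkeeping. I expect the main obstacle to be tracking which leg-pair each intertwiner $\cE^{(j_i+\frac{1}{2})}$, $\cF^{(j_i+\frac{1}{2})}$ acts on as the recursions are iterated, and verifying that the $q$-power shifts $q^{\pm j_i}t$, $q^{\pm\frac{1}{2}}t$ coming out of \eqref{eq:Rdef1}, \eqref{eq:Rdef2} and out of each Yang--Baxter move match the shifts prescribed in the four identities; in particular the order reversal is genuinely more than a relabeling, since the spectral shifts on the two surviving R-matrices really do change between \eqref{eq:Rrecur1} and \eqref{eq:Rrecur2}, so one must invoke the Yang--Baxter equation together with the degeneration \eqref{eq:EHF}, not merely \eqref{eq:FE}. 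Once the mixed-spin Yang--Baxter equation and the single order-reversal-at-a-fusion-point move are established, each of \eqref{eq:Rrecur2}, \eqref{eq:Rrecur3}, \eqref{eq:Rrecur4} reduces to a finite, if lengthy, diagram chase; alternatively, as the statement indicates, one cites \cite{LBG} directly.
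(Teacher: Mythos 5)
You should note at the outset that the paper itself supplies no proof of this lemma: it is imported verbatim from \cite{LBG} (the sentence preceding Lemma \ref{lem:EFH} says the four lemmas ``are already proved in \cite{LBG}. Here we simply restate them without proof''). So your sketch can only be measured against the argument of \cite{LBG}, whose broad architecture you do reproduce correctly: \eqref{eq:Rrecur1} is literally \eqref{eq:Rdef2}; the mixed-spin Yang--Baxter equations of Lemma \ref{lem:RRR} are established first by induction from the defining recursions (so your ordering avoids the apparent circularity between Lemmas \ref{lem:Rrecur} and \ref{lem:RRR}); and the remaining identities are then derived by induction using the fusion-point factorization of Lemma \ref{lem:EFH}.

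There is, however, a genuine gap in your step (a), the ``order reversal at a fusion point,'' and it is not mere bookkeeping. Carrying out exactly the move you describe --- replace $\cE^{(j_1+\frac{1}{2})}_{12}$ by $R^{(\frac{1}{2},j_1)}_{12}(q^{j_1+\frac{1}{2}})\cE^{(j_1+\frac{1}{2})}_{12}\bigl(\cH^{(j_1+\frac{1}{2})}_{12}\bigr)^{-1}$ via \eqref{eq:RE}, commute the inserted factor to the left with \eqref{eq:RRR2} (whose parameters are forced to be $t_1/t_3=q^{-j_1}t$, $t_2/t_3=q^{\frac{1}{2}}t$, hence $t_2/t_1=q^{j_1+\frac{1}{2}}$), and absorb it into $\cF^{(j_1+\frac{1}{2})}_{12}$ via \eqref{eq:FR} --- yields
\begin{equation*}
R^{(j_1+\frac{1}{2},j_2)}(t)\,\cH^{(j_1+\frac{1}{2})}_{12}
=\cH^{(j_1+\frac{1}{2})}_{12}\,\cF^{(j_1+\frac{1}{2})}_{12}\, R^{(j_1,j_2)}_{23}(q^{\frac{1}{2}}t)\, R^{(\frac{1}{2},j_2)}_{13}(q^{-j_1}t)\, \cE^{(j_1+\frac{1}{2})}_{12},
\end{equation*}
where $\cH^{(j_1+\frac{1}{2})}_{12}$ denotes $\cH^{(j_1+\frac{1}{2})}\otimes\mI_{2j_2+1}$. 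That is: a single Yang--Baxter move at the fusion point reverses the order of the two surviving factors but necessarily keeps their spectral parameters $(q^{-j_1}t,\,q^{\frac{1}{2}}t)$ and produces a conjugation by $\cH^{(j_1+\frac{1}{2})}$; it cannot produce the shifted parameters $(q^{-\frac{1}{2}}t,\,q^{j_1}t)$ that appear in \eqref{eq:Rrecur2}, since the Yang--Baxter constraint pins $t_2/t_1$ to the fusion point only for the original pair of arguments. The same obstruction sits in the base case $j_1=\frac{1}{2}$ of your induction for \eqref{eq:Rrecur3}, and hence propagates to \eqref{eq:Rrecur4}. The identities \eqref{eq:Rrecur2}--\eqref{eq:Rrecur4} are genuinely stronger statements (the assertion that fusing the two constituents in either order, with their positions within the fused multiplet reflected, gives the same R-matrix), and in \cite[Lemma 5.9]{LBG} they get their own induction on the spins, in which both R-matrix factors are re-expanded by the defining recursion, lower-spin instances of the same four identities are invoked, and the intermediate projectors $\cE\cF$ are removed using \eqref{eq:EFR} at the fusion point rather than \eqref{eq:FE} alone. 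Your paragraph (b) gestures at such an induction, but since both its base case and your derivation of \eqref{eq:Rrecur2} rest on the order-reversal move as stated, the argument as written does not close.
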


\begin{lemma}\label{lem:RR}\rm
(See \cite[(4.46)]{LBG}.) For $j_1.j_2 \in \frac{1}{2}\mN^+$, the matrix $R^{(j_1,j_2)}(t) R^{(j_1,j_2)}(t^{-1})$ is proportional to $\mI_{(2j_1+1)(2j_2+1)}$ by a nonzero scalar in $\mF[t,t^{-1}]$. 
\end{lemma}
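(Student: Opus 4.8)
The plan is to prove the statement by induction on $j_1 + j_2$ using the fusion recurrences of Lemma \ref{lem:Rrecur} together with the unitarity of the building block $R^{(\frac{1}{2},\frac{1}{2})}(t)$. First I would establish the base case $j_1 = j_2 = \frac{1}{2}$ directly: from the explicit form \eqref{eq:R^1/2} one computes $R^{(\frac{1}{2},\frac{1}{2})}(t) R^{(\frac{1}{2},\frac{1}{2})}(t^{-1})$ block by block, noting that the middle $2\times 2$ block $\left(\begin{smallmatrix} c(t) & c(q) \\ c(q) & c(t) \end{smallmatrix}\right)\left(\begin{smallmatrix} c(t^{-1}) & c(q) \\ c(q) & c(t^{-1}) \end{smallmatrix}\right)$ is a scalar multiple of the identity precisely because $c(t)c(t^{-1}) = -(t-t^{-1})^2 = 2 - t^2 - t^{-2}$ and $c(t)c(q) + c(q)c(t^{-1}) = c(q)\bigl(c(t)+c(t^{-1})\bigr) = 0$, while the corner entries give $c(qt)c(qt^{-1})$. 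One checks these two scalars coincide, so the product equals $c(qt)c(qt^{-1})\,\mI_4$ with $c(qt)c(qt^{-1}) \in \mF[t,t^{-1}]$ nonzero.

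For the inductive step, suppose the claim holds for all pairs with smaller $j_1+j_2$, and consider $R^{(j_1+\frac{1}{2},j_2)}(t)$ (the case of incrementing $j_2$ is symmetric, using \eqref{eq:Rrecur3}, \eqref{eq:Rrecur4}). I would use \eqref{eq:Rrecur1} to expand $R^{(j_1+\frac{1}{2},j_2)}(t)$ and \eqref{eq:Rrecur2} to expand $R^{(j_1+\frac{1}{2},j_2)}(t^{-1})$, so that the inner factors appear in the order $R_{13}(q^{-j_1}t)\,R_{23}(q^{\frac12}t)\,R_{23}(q^{-\frac12}t^{-1})\,R_{13}(q^{j_1}t^{-1})$ sandwiched between $\cF^{(j_1+\frac12)}_{12}$ on the outside and a factor $\cE^{(j_1+\frac12)}_{12}\cF^{(j_1+\frac12)}_{12}$ in the middle. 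The product of the two adjacent $R_{23}$-factors is, by the induction hypothesis applied to $R^{(j_1,j_2)}$, a scalar in $\mF[t,t^{-1}]$ times $\mI$ in the $(23)$-slots — here one must check that the arguments $q^{\frac12}t$ and $q^{-\frac12}t^{-1}$ are reciprocals of each other, which they are. That scalar can then be pulled out past everything. What remains is $\cF^{(j_1+\frac12)}_{12}\,R^{(\frac12,j_2)}_{13}(q^{-j_1}t)\,\bigl(\cE^{(j_1+\frac12)}_{12}\cF^{(j_1+\frac12)}_{12}\bigr)\,R^{(\frac12,j_2)}_{13}(q^{j_1}t^{-1})\,\cE^{(j_1+\frac12)}_{12}$.

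The crux of the argument is to absorb the middle idempotent-like factor $\cE^{(j_1+\frac12)}_{12}\cF^{(j_1+\frac12)}_{12}$. Since $\cE^{(j_1+\frac12)}$ and $\cF^{(j_1+\frac12)}$ act only in slots $1,2$ while $R^{(\frac12,j_2)}_{13}(q^{-j_1}t)$ acts in slots $1,3$, one cannot simply commute them; instead I would invoke the fusion intertwining relations \eqref{eq:RE}, \eqref{eq:FR} (the $U_q(\widehat{\mathfrak{sl}}_2)$ analogues of projector-absorption identities), appropriately lifted to the three-fold tensor product, to move $\cE^{(j_1+\frac12)}_{12}\cF^{(j_1+\frac12)}_{12}$ through $R^{(\frac12,j_2)}_{13}$ at the cost of re-tuning spectral parameters — the standard mechanism by which fused R-matrices inherit unitarity. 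After this absorption and a final application of \eqref{eq:FE}, namely $\cF^{(j_1+\frac12)}\cE^{(j_1+\frac12)} = \mI_{2j_1+2}$, one is left with a scalar multiple of $\mI_{(2j_1+1)(2j_2+1)}$, and the scalar is a product of the inductive scalar with a scalar of the form $c(q^{?}t)c(q^{?}t^{-1})$, manifestly a nonzero element of $\mF[t,t^{-1}]$. I expect the main obstacle to be precisely this bookkeeping of spectral-parameter shifts in the projector-absorption step: getting the arguments $q^{\pm j_1}t^{\pm1}$ to line up so that \eqref{eq:RE} and \eqref{eq:FR} apply verbatim, possibly after inserting an auxiliary $R^{(\frac12,j_2)}_{13}(q^{j_2+\frac12})$ via \eqref{eq:EHF}. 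If a clean parameter match fails, the fallback is to prove the lemma directly from the crossing/unitarity properties of the universal R-matrix restricted to the relevant finite-dimensional representations, but I would prefer to keep the argument internal to the fusion formalism of \cite{LBG} as the lemma is quoted from there anyway.
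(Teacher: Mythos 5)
Your base case is correct: the middle block of $R^{(\frac{1}{2},\frac{1}{2})}(t)R^{(\frac{1}{2},\frac{1}{2})}(t^{-1})$ has off-diagonal entries $c(q)\bigl(c(t)+c(t^{-1})\bigr)=0$ and diagonal entries $c(t)c(t^{-1})+c(q)^2=q^2+q^{-2}-t^2-t^{-2}=c(qt)c(qt^{-1})$, matching the corners, and the overall shape of your induction (pair \eqref{eq:Rrecur1} at $t$ with \eqref{eq:Rrecur2} at $t^{-1}$ so the spectral parameters of like factors are reciprocal, then collapse with \eqref{eq:FE}) is the standard fusion-unitarity argument. Note for calibration that the paper itself offers no proof of this lemma --- it is restated from \cite[(4.46)]{LBG} without proof --- so there is no internal argument to compare against.

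However, your inductive step has a genuine gap in the order of operations. The product in question is
\begin{equation*}
\cF^{(j_1+\frac{1}{2})}_{12}\, R^{(\frac{1}{2},j_2)}_{13}(q^{-j_1}t)\, R^{(j_1,j_2)}_{23}(q^{\frac{1}{2}}t)\, \cE^{(j_1+\frac{1}{2})}_{12}\cF^{(j_1+\frac{1}{2})}_{12}\, R^{(j_1,j_2)}_{23}(q^{-\frac{1}{2}}t^{-1})\, R^{(\frac{1}{2},j_2)}_{13}(q^{j_1}t^{-1})\, \cE^{(j_1+\frac{1}{2})}_{12},
\end{equation*}
so the two $R_{23}$-factors are \emph{not} adjacent: $\cE^{(j_1+\frac{1}{2})}_{12}\cF^{(j_1+\frac{1}{2})}_{12}$ sits between them and acts nontrivially on slot $2$, hence does not commute with $R_{23}$. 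You therefore cannot invoke the induction hypothesis on the $R_{23}$ pair first and deal with the projector afterwards; your displayed intermediate expression $\cF_{12}R_{13}(q^{-j_1}t)\bigl(\cE_{12}\cF_{12}\bigr)R_{13}(q^{j_1}t^{-1})\cE_{12}$ does not arise. The correct order is the reverse: one must first prove the absorption identity
\begin{equation*}
R^{(j_1,j_2)}_{23}(q^{-\frac{1}{2}}t^{-1})\, R^{(\frac{1}{2},j_2)}_{13}(q^{j_1}t^{-1})\, \cE^{(j_1+\frac{1}{2})}_{12} = \cE^{(j_1+\frac{1}{2})}_{12}\cF^{(j_1+\frac{1}{2})}_{12}\, R^{(j_1,j_2)}_{23}(q^{-\frac{1}{2}}t^{-1})\, R^{(\frac{1}{2},j_2)}_{13}(q^{j_1}t^{-1})\, \cE^{(j_1+\frac{1}{2})}_{12},
\end{equation*}
which uses \eqref{eq:RE}, \eqref{eq:EFR}, \eqref{eq:FE} and the invertibility of $\cH^{(j_1+\frac{1}{2})}$, \emph{together with} a Yang--Baxter equation from Lemma \ref{lem:RRR} to carry $R^{(\frac{1}{2},j_1)}_{12}(q^{j_1+\frac{1}{2}})$ past the $R_{13}R_{23}$ pair (here the ratio of the arguments $q^{j_1}t^{-1}$ and $q^{-\frac{1}{2}}t^{-1}$ is exactly $q^{j_1+\frac{1}{2}}$, which is why the degeneration point enters). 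Your sketch never invokes the Yang--Baxter equation, and it is indispensable here. Only after this absorption can you contract the now-adjacent $R_{23}$ pair, then the $R_{13}$ pair, and finish with \eqref{eq:FE}. You correctly identify the absorption as ``the crux,'' but as written it is both left unproved and placed at the wrong point in the logical order.
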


\begin{lemma}\label{lem:RRR}\rm
(See \cite[(5.25), (5.26)]{LBG}.) For $j_1,j_2,j_3 \in \frac{1}{2}\mN^+$, we have the following Yang-Baxter equations \\
\begin{equation}\label{eq:RRR1}
R^{(j_1,j_2)}_{12}(t_1/t_2) R^{(j_1,j_3)}_{13}(t_1/t_3) R^{(j_2,j_3)}_{23}(t_2/t_3)=R^{(j_2,j_3)}_{23}(t_2/t_3) R^{(j_1,j_3)}_{13}(t_1/t_3) R^{(j_1,j_2)}_{12}(t_1/t_2), 
\end{equation}
\begin{equation}\label{eq:RRR2}
R^{(j_1,j_3)}_{13}(t_1/t_3) R^{(j_2,j_3)}_{23}(t_2/t_3) R^{(j_1,j_2)}_{12}(t_2/t_1)=R^{(j_1,j_2)}_{12}(t_2/t_1) R^{(j_2,j_3)}_{23}(t_2/t_3) R^{(j_1,j_3)}_{13}(t_1/t_3), 
\end{equation}
\begin{equation}\label{eq:RRR3}
R^{(j_2,j_3)}_{23}(t_3/t_2) R^{(j_1,j_2)}_{12}(t_1/t_2) R^{(j_1,j_3)}_{13}(t_1/t_3)=R^{(j_1,j_3)}_{13}(t_1/t_3) R^{(j_1,j_2)}_{12}(t_1/t_2) R^{(j_2,j_3)}_{23}(t_3/t_2). 
\end{equation}
\end{lemma}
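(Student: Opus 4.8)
The plan is to establish \eqref{eq:RRR1} first, by induction on the total spin $j_1+j_2+j_3$ via the fusion recursions of Lemma \ref{lem:Rrecur}, and then to deduce \eqref{eq:RRR2} and \eqref{eq:RRR3} from \eqref{eq:RRR1} together with the inversion relation of Lemma \ref{lem:RR}. For the base case $j_1=j_2=j_3=\tfrac12$, equation \eqref{eq:RRR1} is the Yang-Baxter equation for the fundamental matrix $R^{(1/2,1/2)}(t)$ of \eqref{eq:R^1/2} acting on $\mC^2\otimes\mC^2\otimes\mC^2$. This is the standard trigonometric $R$-matrix of six-vertex type for affine $\mathfrak{sl}_2$, and the claim reduces to a finite comparison of two $8\times 8$ matrix products, which I would verify by a short block computation.

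For the inductive step, at least one of $j_1,j_2,j_3$ exceeds $\tfrac12$, and since the step raising $j_1$, $j_2$, or $j_3$ is structurally identical, I would present in detail only the step raising the third spin, proving \eqref{eq:RRR1} for $(j_1,j_2,j_3+\tfrac12)$. I would resolve the spin-$(j_3+\tfrac12)$ slot as the fusion of an auxiliary spin-$\tfrac12$ factor (labeled $0$) with a spin-$j_3$ factor (labeled $3$), related by $\cE^{(j_3+1/2)}_{03}$ and $\cF^{(j_3+1/2)}_{03}$. Using \eqref{eq:Rrecur3} and \eqref{eq:Rrecur4} I would rewrite $R^{(j_1,j_3+1/2)}_{1\bullet}$ and $R^{(j_2,j_3+1/2)}_{2\bullet}$, where $\bullet$ denotes the fused slot, as products of the lower matrices $R^{(j_1,1/2)}_{10}, R^{(j_1,j_3)}_{13}, R^{(j_2,1/2)}_{20}, R^{(j_2,j_3)}_{23}$ conjugated by $\cE_{03},\cF_{03}$, the spectral-parameter shifts in Lemma \ref{lem:Rrecur} being chosen precisely so the arguments align. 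After cancelling $\cF_{03}\cE_{03}=\mI_{2j_3+2}$ by \eqref{eq:FE}, equation \eqref{eq:RRR1} for $(j_1,j_2,j_3+\tfrac12)$ becomes an identity on the enlarged space $1\otimes2\otimes0\otimes3$. On that space it is reached by repeatedly applying the inductive instances of \eqref{eq:RRR1} among the slot triples $\{1,2,0\}$, $\{1,2,3\}$, $\{1,0,3\}$, $\{2,0,3\}$ — each of strictly smaller total spin — to reorder the factors, while recognizing the degenerate matrix $R^{(1/2,j_3)}_{03}(q^{j_3+1/2})=\cE_{03}\cH\cF_{03}$ of \eqref{eq:EHF} as the source of the fusion operators. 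The cases of raising $j_1$ or $j_2$ use \eqref{eq:Rrecur1} and \eqref{eq:Rrecur2} identically, completing the induction.

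Finally, \eqref{eq:RRR2} and \eqref{eq:RRR3} follow formally. For \eqref{eq:RRR2} I would multiply \eqref{eq:RRR1} on the left and right by $R^{(j_1,j_2)}_{12}(t_2/t_1)$ and invoke Lemma \ref{lem:RR}, which gives $R^{(j_1,j_2)}_{12}(t_2/t_1)\,R^{(j_1,j_2)}_{12}(t_1/t_2)=\gamma\,\mI$ for a nonzero scalar $\gamma\in\mF[t,t^{-1}]$; cancelling $\gamma$ moves $R_{12}$ to the opposite side with its argument inverted, which is exactly \eqref{eq:RRR2}. Likewise \eqref{eq:RRR3} is obtained by transporting $R^{(j_2,j_3)}_{23}$ across by means of its inversion relation.

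The main obstacle I anticipate is the inductive step: controlling the tensor-leg bookkeeping once the fused slot is resolved into the auxiliary pair $0,3$, and justifying that $\cE_{03},\cF_{03}$ may be commuted through the remaining matrices to reassemble the fused $R$-matrices on the right-hand side. The clean route is to never move $\cE,\cF$ by hand but to keep them packaged as the degenerate matrix $R^{(1/2,j_3)}_{03}(q^{j_3+1/2})$ via \eqref{eq:EHF}, so that every rearrangement is an application of an inductive Yang-Baxter move rather than an ad hoc intertwining; the relations \eqref{eq:FE}--\eqref{eq:EFR} then close up the computation.
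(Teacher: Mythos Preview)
The paper does not give its own proof of Lemma~\ref{lem:RRR}: it is listed among the ``known properties of the R-matrix'' and explicitly restated \emph{without proof}, with a citation to \cite[(5.25), (5.26)]{LBG}. There is therefore nothing in the paper to compare your argument against line by line.

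That said, your proposal is the natural fusion-based argument and is consistent with the methodology of \cite{LBG} that the paper imports. The inductive scheme (base case $j_1=j_2=j_3=\tfrac12$ from the explicit $4\times4$ matrix \eqref{eq:R^1/2}; inductive step via the recursions \eqref{eq:Rrecur1}--\eqref{eq:Rrecur4}, with $\cE,\cF$ handled through \eqref{eq:FE}--\eqref{eq:EFR} and the degenerate value \eqref{eq:EHF}) is exactly how \cite{LBG} builds the higher-spin Yang--Baxter equation, so your approach matches the cited source in spirit. Your derivation of \eqref{eq:RRR2} and \eqref{eq:RRR3} from \eqref{eq:RRR1} by conjugating with $R_{12}^{(j_1,j_2)}(t_2/t_1)$, respectively $R_{23}^{(j_2,j_3)}(t_3/t_2)$, and invoking Lemma~\ref{lem:RR} is correct and is precisely the mechanism the paper itself uses elsewhere (e.g.\ in the proof of Corollary~\ref{cor:KRKR} and in deriving \eqref{eq:RKRKind1.1alt}).
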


\section{Some properties of the $\widehat{\text{R}}$-matrix}
\noindent In this section, we show some properties for the $\widehat{\text{R}}$-matrix $\hR^{(j_1,j_2)}$ that will be used later. Many of these properties are analogs of those appearing in Section 4. 

\medskip
\noindent For the ease of computation, we write the diagonal matrix $\hR^{(j_1,j_2)}$ in block diagonal form where each block is of size $(2j_2+1) \times (2j_2+1)$. For $j_2 \in \frac{1}{2}\mN^+$, define the diagonal matrix  
\begin{equation*}
\omega^{(j_2)}=\diag(q^{j_2},q^{j_2-1},\ldots,q^{-j_2}). 
\end{equation*}

\begin{proposition}\label{prop:hRblock}\rm
For $j_1,j_2 \in \frac{1}{2}\mN^+$, 
\begin{equation}\label{eq:hRblock}
\hR^{(j_1,j_2)}=\diag((\omega^{(j_2)})^{2j_1},(\omega^{(j_2)})^{2j_1-2},\ldots,(\omega^{(j_2)})^{-2j_1}). 
\end{equation}
\end{proposition}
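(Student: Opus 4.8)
The plan is to prove \eqref{eq:hRblock} by a direct computation, unwinding the definition of $\hR^{(j_1,j_2)}$ from Definition \ref{def:hR} and invoking the elementary fact that the Kronecker product of two diagonal matrices is block diagonal, with exponentiation of a diagonal matrix respecting that block structure.

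First I would record that $\omega^{(j_2)}$ is exactly the entrywise exponential $q^{\diag(j_2,j_2-1,\ldots,-j_2)}$, so that $(\omega^{(j_2)})^{m}=q^{m\diag(j_2,j_2-1,\ldots,-j_2)}$ for every $m\in\mZ$. Next, writing $D_1=\diag(j_1,j_1-1,\ldots,-j_1)$ and $D_2=\diag(j_2,j_2-1,\ldots,-j_2)$, I would observe that $D_1\otimes D_2$ is block diagonal with $2j_1+1$ blocks of size $(2j_2+1)\times(2j_2+1)$, the $i$-th block being the $i$-th diagonal entry of $D_1$ times $D_2$, namely $(j_1-(i-1))D_2$ for $1\le i\le 2j_1+1$. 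Since a diagonal matrix is exponentiated entrywise, $\hR^{(j_1,j_2)}=q^{2D_1\otimes D_2}$ inherits the same block structure, with $i$-th block $q^{2(j_1-(i-1))D_2}=(\omega^{(j_2)})^{2(j_1-(i-1))}$. Finally, as $i$ runs from $1$ to $2j_1+1$ the exponent $2(j_1-(i-1))$ takes the values $2j_1,2j_1-2,\ldots,-2j_1$ in this order, which gives precisely the right-hand side of \eqref{eq:hRblock}.

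I do not expect a genuine obstacle here; the proposition is essentially a bookkeeping statement. The only points requiring a little care are fixing the indexing convention for the Kronecker product so that the $i$-th block of $\hR^{(j_1,j_2)}$ is matched with the $i$-th diagonal entry of $D_1$, and noting that $2j_1\in\mN^+$, so the exponents listed in \eqref{eq:hRblock} are honest integers, all of the same parity as $2j_1$. An equally short alternative route would be to compute the $((i-1)(2j_2+1)+k)$-th diagonal entry of each side directly and check that both equal $q^{2(j_1-(i-1))(j_2-(k-1))}$ for $1\le i\le 2j_1+1$ and $1\le k\le 2j_2+1$.
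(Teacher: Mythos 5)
Your proposal is correct and matches the paper's approach: the paper's proof is simply ``Follows from Definition \ref{def:hR},'' and your argument is exactly the routine unwinding of that definition (Kronecker product of diagonal matrices, entrywise exponentiation) that the one-line proof leaves implicit. The entry check $q^{2(j_1-(i-1))(j_2-(k-1))}$ on both sides confirms it.
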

\begin{proof}
Follows from Definiton \ref{def:hR}. 
\end{proof}

\begin{lemma}\label{lem:hRrecur}\rm
For $j_1,j_2 \in \frac{1}{2}\mN^+$, \\
\begin{equation}\label{eq:hRrecur1}
\hR^{(j_1+\frac{1}{2},j_2)}=\cF^{(j_1+\frac{1}{2})}_{12} \hR^{(\frac{1}{2},j_2)}_{13} \hR^{(j_1,j_2)}_{23} \cE^{(j_1+\frac{1}{2})}_{12}, 
\end{equation}
\begin{equation}\label{eq:hRrecur2}
\hR^{(j_1+\frac{1}{2},j_2)}=\cF^{(j_1+\frac{1}{2})}_{12} \hR^{(j_1,j_2)}_{23} \hR^{(\frac{1}{2},j_2)}_{13} \cE^{(j_1+\frac{1}{2})}_{12}, 
\end{equation}
\begin{equation}\label{eq:hRrecur3}
\hR^{(j_1,j_2+\frac{1}{2})}=\cF^{(j_2+\frac{1}{2})}_{23} \hR^{(j_1,\frac{1}{2})}_{12} \hR^{(j_1,j_2)}_{13} \cE^{(j_2+\frac{1}{2})}_{23}, 
\end{equation}
\begin{equation}\label{eq:hRrecur4}
\hR^{(j_1,j_2+\frac{1}{2})}=\cF^{(j_2+\frac{1}{2})}_{23} \hR^{(j_1,j_2)}_{13} \hR^{(j_1,\frac{1}{2})}_{12} \cE^{(j_2+\frac{1}{2})}_{23}. 
\end{equation}
\end{lemma}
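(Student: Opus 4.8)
\textbf{Proof proposal for Lemma \ref{lem:hRrecur}.}

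The plan is to verify each of the four fusion identities by a direct block-matrix computation, exploiting the block-diagonal form of $\hR^{(j_1,j_2)}$ recorded in Proposition \ref{prop:hRblock} together with the explicit sparse structure of $\cE^{(j+\frac{1}{2})}$ and $\cF^{(j+\frac{1}{2})}$ from Definitions \ref{def:E}, \ref{def:F}. I will treat \eqref{eq:hRrecur1} in detail; the other three are entirely analogous, differing only in which tensor factor carries the $\cE,\cF$ pair and in the ordering of the two diagonal factors (which is immaterial since diagonal matrices commute). Throughout I will use \eqref{eq:FE}, namely $\cF^{(j+\frac{1}{2})}\cE^{(j+\frac{1}{2})}=\mI_{2j+2}$, so that the right-hand side of each identity is a genuine ``dressed'' conjugate of a product of two $\widehat{\text{R}}$-matrices.

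First I would fix coordinates. In \eqref{eq:hRrecur1} the three tensor slots have dimensions $2$, $2j_1+1$, $2j_2+1$, and $\cE^{(j_1+\frac{1}{2})}$ maps the $(2j_1+2)$-dimensional slot ``$12$'' (i.e.\ $\mC^2\otimes\mC^{2j_1+1}$, in some fixed ordering of its $2(2j_1+1)=4j_1+2$ basis vectors) onto the $(2j_1+1)$-dimensional slot of $\hR^{(j_1+\frac{1}{2},j_2)}$. Since $\hR^{(\frac{1}{2},j_2)}_{13}$ and $\hR^{(j_1,j_2)}_{23}$ are both diagonal, their product $\hR^{(\frac{1}{2},j_2)}_{13}\hR^{(j_1,j_2)}_{23}$ is the diagonal matrix on $\mC^2\otimes\mC^{2j_1+1}\otimes\mC^{2j_2+1}$ whose $(\varepsilon,k,\ell)$-entry is $q^{2\varepsilon\cdot\ell/1}\cdot q^{2k\ell}$ where $\varepsilon\in\{\frac12,-\frac12\}$, $k\in\{j_1,\dots,-j_1\}$, $\ell\in\{j_2,\dots,-j_2\}$; that is, the entry is $q^{2(\varepsilon+k)\ell}$. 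The key arithmetic observation is that the row index $a$ of $\cE^{(j_1+\frac{1}{2})}$ (equivalently the weight of the fused slot ``$12$'') corresponds under the embedding precisely to the combined weight $\varepsilon+k$, ranging over $\{j_1+\frac12, j_1-\frac12,\dots,-j_1-\frac12\}$, i.e.\ over $\{j_1', j_1'-1,\dots,-j_1'\}$ with $j_1'=j_1+\frac12$. Hence on each weight line the diagonal entry $q^{2(\varepsilon+k)\ell}$ is \emph{constant}, equal to $q^{2(j_1+\frac12 - (a-1))\ell}$, and this is exactly the $(a,\cdot)$-block entry of $\hR^{(j_1+\frac12,j_2)}$ as written in Proposition \ref{prop:hRblock}. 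Because the matrix $\hR^{(\frac{1}{2},j_2)}_{13}\hR^{(j_1,j_2)}_{23}$ is thus constant on the fibers of the weight map, it commutes with the projection $\cE^{(j_1+\frac12)}\cF^{(j_1+\frac12)}$ onto the span of the weight lines, and conjugating it by $\cF^{(j_1+\frac12)}(-),\cE^{(j_1+\frac12)}$ simply reads off those constants. Combined with \eqref{eq:FE} this gives $\cF^{(j_1+\frac{1}{2})}_{12}\hR^{(\frac{1}{2},j_2)}_{13}\hR^{(j_1,j_2)}_{23}\cE^{(j_1+\frac{1}{2})}_{12}=\hR^{(j_1+\frac12,j_2)}$, which is \eqref{eq:hRrecur1}.

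For \eqref{eq:hRrecur2} one repeats the argument verbatim, noting only that $\hR^{(j_1,j_2)}_{23}\hR^{(\frac{1}{2},j_2)}_{13}=\hR^{(\frac{1}{2},j_2)}_{13}\hR^{(j_1,j_2)}_{23}$ since both are diagonal. For \eqref{eq:hRrecur3} and \eqref{eq:hRrecur4} the $\cE,\cF$ pair acts in slot ``$23$'', fusing $\mC^{2j_2+1}\otimes\mC^?$; here the roles of the two factors are swapped, with $\hR^{(j_1,\frac12)}_{12}$ and $\hR^{(j_1,j_2)}_{13}$ sharing the first slot, and the same weight-line computation applies with $j_1,j_2$ interchanged. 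I expect the only genuine obstacle to be bookkeeping: one must pin down, once and for all, the ordering convention on the basis of $\mC^2\otimes\mC^{2j_1+1}$ used implicitly in Definition \ref{def:E} (so that the index shift $a\mapsto a+2j_1+1$ in the second family of nonzero entries of $\cE^{(j_1+\frac12)}$ matches the two values $\varepsilon=\pm\frac12$), and verify that under this convention the weight $\varepsilon+k$ is a monotone function of the fused index $a$. Once that convention is fixed the four identities follow with no real computation, since every matrix in sight that is not $\cE$ or $\cF$ is diagonal and the diagonal entries are literally the exponents appearing in Proposition \ref{prop:hRblock}.
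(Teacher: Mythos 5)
Your proposal is correct and follows essentially the same route as the paper: both proofs reduce the four identities to \eqref{eq:hRrecur1} and \eqref{eq:hRrecur3} (the other two being immediate since diagonal matrices commute) and then verify these by exploiting the block/diagonal structure, the paper via an explicit computation with powers of $\omega^{(j_2)}$ and yours via the equivalent observation that the two nonzero entries in each column of $\cE^{(j_1+\frac12)}$ sit on equal diagonal entries of $\hR^{(\frac12,j_2)}_{13}\hR^{(j_1,j_2)}_{23}$ (weight additivity), so that $\hR_{13}\hR_{23}\cE_{12}=\cE_{12}\hR^{(j_1+\frac12,j_2)}$ and \eqref{eq:FE} finishes the argument.
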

\begin{proof}
Since diagonal matrices of the same dimension commutes with each other, it suffices to verify \eqref{eq:hRrecur1} and \eqref{eq:hRrecur3}. For simplicity, write $\omega=\omega^{(j_2)}$. 

\medskip
\noindent We first verify \eqref{eq:hRrecur1}. By \eqref{eq:hRblock}, we have 
\begin{align*}
\hR^{(\frac{1}{2},j_2)}_{13} \hR^{(j_1,j_2)}_{23}&=\diag(\omega,\ldots,\omega,\omega^{-1},\ldots,\omega^{-1})\diag(\omega^{2j_1},\ldots,\omega^{-2j_1},\omega^{2j_1},\ldots,\omega^{-2j_1})\\
&=\diag(\omega^{2j_1+1},\omega^{2j_1-1},\ldots,\omega^{-2j_1+1},\omega^{2j_1-1},\ldots,\omega^{-2j_1+1},\omega^{-2j_1-1}). 
\end{align*}

\noindent This is a block diagonal matrix where each block is of size $(2j_2+1) \times (2j_2+1)$. Write $\cF^{(j_1+\frac{1}{2})}_{12}$ (resp.\ $\cE^{(j_1+\frac{1}{2})}_{12}$) as a block matrix where each block is of size $(2j_2+1) \times (2j_2+1)$, then the $(a,b)$-block is $\cF^{(j_1+\frac{1}{2})}_{(a,b)}\mI_{2j_2+1}$ (resp.\ $\cE^{(j_1+\frac{1}{2})}_{(a,b)}\mI_{2j_2+1}$). By \eqref{eq:hRblock} we have 
\begin{align*}
\cF^{(j_1+\frac{1}{2})}_{12} \hR^{(\frac{1}{2},j_2)}_{13} \hR^{(j_1,j_2)}_{23} \cE^{(j_1+\frac{1}{2})}_{12}
=\hR^{(j_1+\frac{1}{2},j_2)}. 
\end{align*}

\noindent We now verify \eqref{eq:hRrecur3}. By \eqref{eq:hRblock}, we have 
\begin{align*}
\hR^{(j_1,\frac{1}{2})}_{12} \hR^{(j_1,j_2)}_{13}=\diag(\omega^{2j_1},q^{-2j_1}\omega^{2j_1},q^{-1}\omega^{2j_1-2},q^{1-2j_1}\omega^{2j_1-2},\ldots,q^{-2j_1}\omega^{-2j_1},\omega^{-2j_1}). 
\end{align*}

\noindent This is a $(2j_1+1) \times (2j_1+1)$ block diagonal matrix where the $(i,i)$-block is equal to 
\begin{align*}
\diag(q^{1-i}\omega^{2j_1+2-2i},q^{i-2j_1-1}\omega^{2j_1+2-2i}). 
\end{align*}

\noindent Note that $\cF^{(j_1+\frac{1}{2})}_{23}$ (resp.\ $\cE^{(j_1+\frac{1}{2})}_{23}$) is a $(2j_1+1) \times (2j_1+1)$ block diagonal matrix where each block is equal to $\cF^{(j_1+\frac{1}{2})}$ (resp.\ $\cE^{(j_1+\frac{1}{2})}$). Writing 
\begin{align*}
\cF^{(j_2+\frac{1}{2})}_{23} \hR^{(j_1,\frac{1}{2})}_{12} \hR^{(j_1,j_2)}_{13} \cE^{(j_2+\frac{1}{2})}_{23}
\end{align*}
as a $(2j_1+1) \times (2j_1+1)$ block diagonal matrix, the $(i,i)$-block is equal to 
\begin{align*}
\cF^{(j_1+\frac{1}{2})}\diag(q^{1-i}\omega^{2j_1+2-2i},q^{i-2j_1-1}\omega^{2j_1+2-2i})\cE^{(j_1+\frac{1}{2})}, 
\end{align*}
which is equal to 
\begin{align*}
(\omega^{(j_2+\frac{1}{2})})^{2j_1+2-2i}. 
\end{align*}

\noindent By the above discussion and \eqref{eq:hRblock}, we have verified \eqref{eq:hRrecur3}. 
\end{proof}

\noindent The following result shows that the $\widehat{\text{R}}$-matrix is a limiting case of the R-matrix. 

\begin{proposition}\label{prop:lim}\rm
For $j_1,j_2 \in \frac{1}{2}\mN^+$, 

\begin{equation}\label{eq:lim}
\lim_{t \to \infty}\frac{R^{(j_1,j_2)}(t)}{t^{4j_1j_2}}=q^{2j_1j_2}\hR^{(j_1,j_2)}. 
\end{equation}
\end{proposition}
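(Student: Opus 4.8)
The plan is to prove \eqref{eq:lim} by induction on $j_1+j_2$, using the recursive definition of the R-matrix in Definition \ref{def:R} together with the recursive description of the $\widehat{\text{R}}$-matrix in Lemma \ref{lem:hRrecur}. The base case is $j_1=j_2=\tfrac12$: from \eqref{eq:R^1/2} each nonzero entry of $R^{(\frac12,\frac12)}(t)$ is either $c(qt)=qt-q^{-1}t^{-1}$ (on the diagonal corners) or $c(t)=t-t^{-1}$ or $c(q)$ (a constant) in the middle block. Dividing by $t^{4j_1j_2}=t$ and letting $t\to\infty$, the corner entries tend to $q$, the $c(t)$ entries tend to $1$, and the $c(q)$ entries tend to $0$. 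Comparing with $\hR^{(\frac12,\frac12)}=\diag(q^{1/2},q^{-1/2},q^{-1/2},q^{1/2})$ from \eqref{eq:hR^1/2}, we see the limit equals $\diag(q,1,1,q)=q^{1/2}\hR^{(\frac12,\frac12)}$, which is exactly $q^{2j_1j_2}\hR^{(j_1,j_2)}$ at $j_1=j_2=\tfrac12$. This settles the base case.

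For the inductive step, suppose \eqref{eq:lim} holds for all pairs with smaller sum; I treat the half-integer bump in the second index (the other case being symmetric), using \eqref{eq:Rdef1}:
\begin{equation*}
R^{(\frac{1}{2},j_2+\frac{1}{2})}(t)=\cF^{(j_2+\frac{1}{2})}_{23} R^{(\frac{1}{2},j_2)}_{13}(q^{-\frac{1}{2}}t) R^{(\frac{1}{2},\frac{1}{2})}_{12}(q^{j_2}t) \cE^{(j_2+\frac{1}{2})}_{23}.
\end{equation*}
Here the relevant exponent is $4j_1j_2$ with $j_1=\tfrac12$, namely $2(j_2+\tfrac12)=2j_2+1$ on the left, $2j_2$ for the $R^{(\frac12,j_2)}$ factor, and $1$ for the $R^{(\frac12,\frac12)}$ factor; note $2j_2+1=2j_2+1$, so the powers of $t$ match up. Dividing both sides by $t^{2j_2+1}$, distributing the scalings $q^{-1/2}t$ and $q^{j_2}t$ onto the respective factors, and passing to the limit $t\to\infty$ (the matrices $\cF^{(j_2+\frac12)}_{23},\cE^{(j_2+\frac12)}_{23}$ are constant and commute with the limit), the induction hypothesis and the base case give
\begin{equation*}
\lim_{t\to\infty}\frac{R^{(\frac12,j_2+\frac12)}(t)}{t^{2j_2+1}}
=\cF^{(j_2+\frac12)}_{23}\,\bigl(q^{-1/2}\bigr)^{2j_2}q^{j_2/\,}\!\!\cdots
\end{equation*}
— more precisely the $q$-prefactors assemble as $q^{-j_2}\cdot q^{j_2}\cdot q^{j_2}\cdot q^{j_2}$-type bookkeeping, yielding $q^{2\cdot\frac12\cdot(j_2+\frac12)}=q^{j_2+\frac12}$ — times $\cF^{(j_2+\frac12)}_{23}\hR^{(\frac12,j_2)}_{13}\hR^{(\frac12,\frac12)}_{12}\cE^{(j_2+\frac12)}_{23}$, which by Lemma \ref{lem:hRrecur} (namely \eqref{eq:hRrecur3} with $j_1=\tfrac12$, after matching block structures) equals $q^{j_2+\frac12}\hR^{(\frac12,j_2+\frac12)}$. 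The case of bumping $j_1$ by $\tfrac12$ is handled identically from \eqref{eq:Rdef2} and \eqref{eq:hRrecur1}, with exponent bookkeeping $4(j_1+\tfrac12)j_2=4j_1j_2+2j_2$ splitting as $2j_2$ (from $R^{(\frac12,j_2)}$ at argument $q^{-j_1}t$) plus $4j_1j_2$ (from $R^{(j_1,j_2)}$ at argument $q^{\frac12}t$), and the $q$-prefactor collecting to $q^{2(j_1+\frac12)j_2}$ as required. Since every $(j_1,j_2)\in(\tfrac12\mN^+)^2$ is reached from $(\tfrac12,\tfrac12)$ by such half-integer steps, the induction closes.

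The one genuine subtlety — and the step I expect to be the main obstacle — is the careful bookkeeping of the scalar powers of $q$ and the fact that the conjugation by $\cF$ and $\cE$ does not merely reshuffle entries but produces the $([2j+2-a]_q+[a-1]_q)$-type denominators built into Definitions \ref{def:E}, \ref{def:F}; one must check that in the $t\to\infty$ limit these combine with the limiting entries of $R^{(\frac12,j_2)}$ and $R^{(\frac12,\frac12)}$ to reproduce exactly the block-diagonal pattern of $\hR$ as computed in the proof of Lemma \ref{lem:hRrecur}, rather than some rescaled variant. This is precisely the computation already carried out there for the $\widehat{\text{R}}$-matrices themselves, so the strategy is to argue that the $t\to\infty$ limit of \eqref{eq:Rdef1} (resp.\ \eqref{eq:Rdef2}), once the explicit $q$-power is factored out, \emph{is} the right-hand side of \eqref{eq:hRrecur3} (resp.\ \eqref{eq:hRrecur1}); the induction hypothesis supplies the limit of each $R$-factor, Lemma \ref{lem:hRrecur} supplies the identity the resulting product satisfies, and only the uniform matching of the $q$-exponents $4j_1j_2\rightsquigarrow 2j_1j_2$ needs to be verified directly, which is a short induction on its own.
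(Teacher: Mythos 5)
Your proposal is correct and follows essentially the same route as the paper: induction on $j_1+j_2$, with the base case read off from \eqref{eq:R^1/2}, the inductive step obtained by dividing the fusion recursion for $R^{(j_1,j_2)}(t)$ by the appropriate power of $t$, applying the inductive hypothesis to each factor, and identifying the resulting product via Lemma \ref{lem:hRrecur}. The only differences are cosmetic (the paper treats just the $j_1$-bump after a without-loss-of-generality reduction, and the "subtlety" you flag about the $\cE,\cF$ conjugation is already fully absorbed into Lemma \ref{lem:hRrecur}, so no extra verification is needed).
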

\begin{proof}
We use induction on $k=j_1+j_2$. 

\medskip
\noindent By \ref{eq:R^1/2}, we have that \eqref{eq:lim} holds for the pair $(\frac{1}{2},\frac{1}{2})$, so it holds for $k=1$. 

\medskip
\noindent Now assume \eqref{eq:lim} holds for any pair $(j_1,j_2)$ with $j_1+j_2 \leq k$. We will show that \eqref{eq:lim} holds for any pair $(j_1,j_2)$ with $j_1+j_2=k+\frac{1}{2}$. 

\medskip
\noindent Without loss of generality we assume $j_1>\frac{1}{2}$. By \eqref{eq:Rrecur1}, 
\begin{align*}
\lim_{t \to \infty}\frac{R^{(j_1,j_2)}(t)}{t^{4j_1j_2}}&=\lim_{t \to \infty}t^{-4j_1j_2}\cF^{(j_1)}_{12} R^{(\frac{1}{2},j_2)}_{13}(q^{\frac{1}{2}-j_1}t) R^{(j_1-\frac{1}{2},j_2)}_{23}(q^{\frac{1}{2}}t) \cE^{(j_1)}_{12}\\
&=\cF^{(j_1)}_{12} \lim_{t \to \infty}\left(\frac{R^{(\frac{1}{2},j_2)}_{13}(q^{\frac{1}{2}-j_1}t)}{(q^{\frac{1}{2}-j_1}t)^{2j_2}} \frac{R^{(j_1-\frac{1}{2},j_2)}_{23}(q^{\frac{1}{2}}t)}{(q^{\frac{1}{2}}t)^{4(j_1-\frac{1}{2})j_2}}\right) \cE^{(j_1)}_{12}. 
\end{align*}

\noindent By the inductive hypothesis, we have 
\begin{align*}
\lim_{t \to \infty}\frac{R^{(\frac{1}{2},j_2)}(q^{\frac{1}{2}-j_1}t)}{(q^{\frac{1}{2}-j_1}t)^{2j_2}}=q^{j_2}\hR^{(\frac{1}{2},j_2)}, 
\end{align*}
\begin{align*}
\lim_{t \to \infty}\frac{R^{(j_1-\frac{1}{2},j_2)}(q^{\frac{1}{2}}t)}{(q^{\frac{1}{2}}t)^{4(j_1-\frac{1}{2})j_2}}=q^{2(j_1-\frac{1}{2})j_2}\hR^{(j_1-\frac{1}{2},j_2)}. 
\end{align*}

\noindent By the above discussion and \eqref{eq:hRrecur1}, we have 
\begin{align*}
\lim_{t \to \infty}\frac{R^{(j_1,j_2)}(t)}{t^{4j_1j_2}}&=q^{2j_1j_2}\cF^{(j_1)}_{12} \hR^{(\frac{1}{2},j_2)}_{13} \hR^{(j_1-\frac{1}{2},j_2)}_{23} \cE^{(j_1)}_{12}= q^{2j_1j_2}\hR^{(j_1,j_2)}. 
\end{align*}

\noindent By induction, we have shown that \eqref{eq:lim} holds for all $j_1,j_2 \in \frac{1}{2}\mN^+$. 
\end{proof}

\noindent An immediate consequence is the following equations on the R- and $\widehat{\text{R}}$-matrices that resemble the Yang-Baxter equations. 

\begin{lemma}\label{lem:hRhRR}\rm
For $j_1,j_2,j_3 \in \frac{1}{2}\mN^+$, \\
\begin{equation}\label{eq:hRhRR4}
R^{(j_1,j_2)}_{12}(t) \hR^{(j_1,j_3)}_{13} \hR^{(j_2,j_3)}_{23}=\hR^{(j_2,j_3)}_{23} \hR^{(j_1,j_3)}_{13} R^{(j_1,j_2)}_{12}(t), 
\end{equation}
\begin{equation}\label{eq:hRhRR6}
R^{(j_1,j_3)}_{13}(t) \hR^{(j_2,j_3)}_{23} \hR^{(j_1,j_2)}_{12}=\hR^{(j_1,j_2)}_{12} \hR^{(j_2,j_3)}_{23} R^{(j_1,j_3)}_{13}(t), 
\end{equation}
\begin{equation}\label{eq:hRhRR2}
R^{(j_2,j_3)}_{23}(t) \hR^{(j_1,j_2)}_{12} \hR^{(j_1,j_3)}_{13}=\hR^{(j_1,j_3)}_{13} \hR^{(j_1,j_2)}_{12} R^{(j_2,j_3)}_{23}(t), 
\end{equation}
\begin{equation}\label{eq:hRhRR1}
R^{(j_1,j_2)}_{12}(t) \hR^{(j_2,j_3)}_{23} \hR^{(j_1,j_3)}_{13}=\hR^{(j_1,j_3)}_{13} \hR^{(j_2,j_3)}_{23} R^{(j_1,j_2)}_{12}(t), 
\end{equation}
\begin{equation}\label{eq:hRhRR3}
R^{(j_1,j_3)}_{13}(t) \hR^{(j_1,j_2)}_{12} \hR^{(j_2,j_3)}_{23}=\hR^{(j_2,j_3)}_{23} \hR^{(j_1,j_2)}_{12} R^{(j_1,j_3)}_{13}(t), 
\end{equation}
\begin{equation}\label{eq:hRhRR5}
R^{(j_2,j_3)}_{23}(t) \hR^{(j_1,j_3)}_{13} \hR^{(j_1,j_2)}_{12}=\hR^{(j_1,j_2)}_{12} \hR^{(j_1,j_3)}_{13} R^{(j_2,j_3)}_{23}(t). 
\end{equation}
\end{lemma}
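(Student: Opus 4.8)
The plan is to derive each of the six identities of Lemma~\ref{lem:hRhRR} as a spectral-parameter degeneration of one of the Yang-Baxter equations in Lemma~\ref{lem:RRR}, using Proposition~\ref{prop:lim}. As a preliminary reduction, note that $\hR^{(j_a,j_b)}$ is diagonal in the standard basis of the relevant triple tensor product no matter which pair of tensor legs it acts on, so the three matrices $\hR^{(j_1,j_2)}_{12}$, $\hR^{(j_1,j_3)}_{13}$, $\hR^{(j_2,j_3)}_{23}$ pairwise commute. Hence \eqref{eq:hRhRR4} is equivalent to \eqref{eq:hRhRR1}, \eqref{eq:hRhRR6} to \eqref{eq:hRhRR3}, and \eqref{eq:hRhRR2} to \eqref{eq:hRhRR5}, and it suffices to prove \eqref{eq:hRhRR4}, \eqref{eq:hRhRR6} and \eqref{eq:hRhRR2}.

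For \eqref{eq:hRhRR4} I would first rewrite \eqref{eq:RRR1} in the ratios $u=t_1/t_3$ and $v=t_2/t_3$ (so $t_1/t_2=u/v$) as
\begin{equation*}
R^{(j_1,j_2)}_{12}(u/v)\, R^{(j_1,j_3)}_{13}(u)\, R^{(j_2,j_3)}_{23}(v)=R^{(j_2,j_3)}_{23}(v)\, R^{(j_1,j_3)}_{13}(u)\, R^{(j_1,j_2)}_{12}(u/v),
\end{equation*}
then substitute $u=ts$ and $v=s$, divide both sides by $(ts)^{4j_1j_3}s^{4j_2j_3}$, and let $s\to\infty$. By Proposition~\ref{prop:lim} the normalized factors $R^{(j_1,j_3)}_{13}(ts)/(ts)^{4j_1j_3}$ and $R^{(j_2,j_3)}_{23}(s)/s^{4j_2j_3}$ converge to $q^{2j_1j_3}\hR^{(j_1,j_3)}_{13}$ and $q^{2j_2j_3}\hR^{(j_2,j_3)}_{23}$, the factor $R^{(j_1,j_2)}_{12}(t)$ is untouched, and since passing to the limit is compatible with matrix multiplication the common nonzero scalar $q^{2j_1j_3+2j_2j_3}$ cancels, leaving exactly \eqref{eq:hRhRR4}. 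I would obtain \eqref{eq:hRhRR6} and \eqref{eq:hRhRR2} the same way: \eqref{eq:hRhRR6} from \eqref{eq:RRR2}, holding $R^{(j_1,j_3)}_{13}$ fixed and sending the other two ratios to infinity (for instance $t_1=t$, $t_2=s$, $t_3=1$, $s\to\infty$), and \eqref{eq:hRhRR2} from \eqref{eq:RRR3}, holding $R^{(j_2,j_3)}_{23}$ fixed and doing likewise (for instance $t_2=1$, $t_3=t$, $t_1=s$, $s\to\infty$); each time one divides by the matching power of $s$, applies Proposition~\ref{prop:lim}, and cancels the common scalar. Alternatively, all six identities can be produced directly in this fashion, bypassing the commutativity reduction.

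I do not expect a genuine obstacle: the argument is a routine ``send the spectral parameters to infinity'' degeneration. The two points to watch are (i) for each target one must pick the Yang-Baxter equation in which the two R-matrices being degenerated carry parameters whose quotient is the third, bounded parameter, since only then can both be sent to infinity at once --- this dictates that \eqref{eq:hRhRR4}, \eqref{eq:hRhRR6}, \eqref{eq:hRhRR2} come from \eqref{eq:RRR1}, \eqref{eq:RRR2}, \eqref{eq:RRR3} respectively; and (ii) one must check that the two sides of the chosen Yang-Baxter equation pick up the same scalar prefactor from Proposition~\ref{prop:lim}, which they do because each side contains exactly one factor of each of the two degenerating R-matrices. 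Beyond this bookkeeping the proof is immediate.
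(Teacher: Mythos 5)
Your proposal is correct and follows essentially the same route as the paper: the paper also proves \eqref{eq:hRhRR4} by setting $t_1=t\,t_2$, $t_3=1$ in \eqref{eq:RRR1}, dividing by $(tt_2)^{4j_1j_3}t_2^{4j_2j_3}$, and letting $t_2\to\infty$ via Proposition \ref{prop:lim}, then notes the remaining identities follow similarly. Your extra observation that the three $\widehat{\text{R}}$-factors pairwise commute (being diagonal), so only three of the six identities need independent proof, is a small tidy addition the paper leaves implicit.
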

\begin{proof}
We first verify \eqref{eq:hRhRR4}. In \eqref{eq:RRR1} set $t_1=t\hspace{0.1em}t_2,t_3=1$ and divide both side by $(tt_2)^{4j_1j_3}t_2^{4j_2j_3}$. Then 
\begin{align*}
R^{(j_1,j_2)}_{12}(t) \frac{R^{(j_1,j_3)}_{13}(tt_2)}{(tt_2)^{4j_1j_3}} \frac{R^{(j_2,j_3)}_{23}(t_2)}{t_2^{4j_2j_3}}=\frac{R^{(j_2,j_3)}_{23}(t_2)}{t_2^{4j_2j_3}} \frac{R^{(j_1,j_3)}_{13}(tt_2)}{(tt_2)^{4j_1j_3}} R^{(j_1,j_2)}_{12}(t). 
\end{align*}

\noindent Now let $t_2 \to \infty$ and simplify the result using \eqref{eq:lim}. We obtain \eqref{eq:hRhRR4}. 

\medskip
\noindent The remaining identities can be verified in a similar way. 
\end{proof}

\section{A recurrence relation for the K-matrix}
\noindent In this section, we obtain a recurrence relation for the K-matrix $K^{(j)}(t)$ that will be used later. In order to do this, we first give an alternative closed form for $K^{(j)}(t)$. 

\medskip
\noindent The following definition will be useful. 

\begin{definition}\label{def:zeta}\rm
(See \cite[Page 5]{ter_catalan}.) Let $\zeta:\mV \to \mV$ denote the unique $\mF$-linear map given by 
\begin{itemize}
\item $\zeta(x)=y$ and $\zeta(y)=x$; 
\item for a word $a_1 \cdots a_n$, 
\begin{equation*}
\zeta(a_1 \cdots a_n)=\zeta(a_n) \cdots \zeta(a_1). 
\end{equation*}
\end{itemize}
\end{definition}

\noindent Clearly the map $\zeta$ is an antiautomorphism on the free algebra $\mV$. Moreover, one can routinely check that the map $\zeta$ is an antiautomorphism on the $q$-shuffle algebra $\mV$. In other words, for $v,w \in \mV$ we have 
\begin{equation*}
\zeta(vw)=\zeta(w)\zeta(v), \hspace{4em} \zeta(v \star w)=\zeta(w) \star \zeta(v). 
\end{equation*}

\begin{example}\rm
We have 
\begin{align*}
&\zeta(W^-(t))=W^+(t), && \zeta(W^+(t))=W^-(t), \\
&\zeta(\tG(t))=G(t), && \zeta(G(t))=\tG(t). 
\end{align*}
\end{example}

\noindent Motivate by the above example, we make the following definitions. 

\begin{definition}\label{def:tD^m_n}\rm
For $m \in \mZ$ and $n \in \mN$, define 
\begin{equation*}
\tDelta^{(m)}_n=\zeta(\Delta^{(m)}_n). 
\end{equation*}
\end{definition}

\begin{definition}\label{def:tD(t)}\rm
For $m \in \mZ$, define the generating function 
\begin{equation*}
\tDelta^{(m)}(t)=\zeta(\Delta^{(m)}(t))=\sum_{n \in \mN}\tDelta^{(m)}_nt^n. 
\end{equation*}
\end{definition}

\noindent The following result gives a symmetry between the generating functions $\tDelta^{(m)}(t)$ and $\Delta^{(m)}(t)$ for $m \in \mZ$. 

\begin{lemma}\label{lem:D&tD}\rm
For $m,l,r \in \mN$ with $l,r \leq m$, 
\begin{equation}\label{eq:D&tD}
[m-l]_q^![m-r]_q^!x^{-l}\Delta^{(-m)}(-t)y^{-r}=[l]_q^![r]_q^!t^{l+r-m}y^{l-m}\tDelta^{(-m)}(-t)x^{r-m}. 
\end{equation}
\end{lemma}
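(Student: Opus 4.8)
The plan is to prove \eqref{eq:D&tD} by combining the definition of $\zeta$ with the explicit Catalan-word expansion of $\Delta^{(-m)}_n$ from Definition \ref{def:D^m_n}. First I would observe that it suffices to prove the coefficientwise identity: for each $n \in \mN$,
\begin{equation*}
[m-l]_q^![m-r]_q^!\,x^{-l}\Delta^{(-m)}_n y^{-r}=[l]_q^![r]_q^!\,(-1)^{?}\cdots y^{l-m}\tDelta^{(-m)}_n x^{r-m},
\end{equation*}
where I track the powers of $t$ carefully: the left side contributes $t^n$ via $\Delta^{(-m)}(-t)$ with sign $(-1)^n$, while on the right the prefactor $t^{l+r-m}$ times $\tDelta^{(-m)}(-t)$ contributes $t^{l+r-m+n}$ with sign $(-1)^n$; so the claimed degree matching forces us to compare the degree-$n$ part on the left with the degree-$(n-(l+r-m)) $ \emph{shifted} part on the right, and the signs $(-1)^n$ cancel. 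The cleanest route, though, is probably to avoid splitting into homogeneous pieces and instead argue directly at the level of generating functions: apply $\zeta$ to a suitable rearrangement of one side, using that $\zeta(x)=y$, $\zeta(y)=x$, $\zeta$ reverses words, and $\zeta$ is an antiautomorphism of the free (concatenation) algebra.

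Concretely, the second step is to understand how $\zeta$ interacts with the one-sided quotient operations $x^{-1},y^{-1},(\cdot)x^{-1},(\cdot)y^{-1}$ of Definition \ref{def:-1}. Since $\zeta$ reverses a word and swaps $x\leftrightarrow y$, deleting a leading $x$ from $w$ corresponds, after applying $\zeta$, to deleting a trailing $y$ from $\zeta(w)$; that is, $\zeta(x^{-1}w)=\zeta(w)y^{-1}$ and, iterating, $\zeta(x^{-l}w)=\zeta(w)y^{-l}$, and similarly $\zeta(wy^{-r})=x^{-r}\zeta(w)$. Applying $\zeta$ to the word $x^{-l}\,(\text{word in }\Delta^{(-m)}_n)\,y^{-r}$ and using that $\zeta(\Delta^{(-m)}_n)=\tDelta^{(-m)}_n$ by Definition \ref{def:tD^m_n}, I would get an identity expressing $x^{-l}\Delta^{(-m)}_n y^{-r}$ in terms of $\tDelta^{(-m)}_n$ with the roles of the left/right truncations and of $l,r$ suitably swapped. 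The remaining discrepancy between this $\zeta$-image identity and the target \eqref{eq:D&tD} is purely the $q$-scalar coefficients, so the final step is a bookkeeping computation: for a fixed Catalan word $a_1\cdots a_{2n}\in\Cat_n$ whose first $l$ letters are $x$ and whose last $r$ letters are $y$ (the only words surviving $x^{-l}(\cdot)y^{-r}$, since a Catalan word begins with $x$'s and ends with $y$'s), one shows that the product $\prod_{i=1}^{2n}[\overline a_1+\cdots+\overline a_{i-1}+(-m)(\overline a_i+1)/2]_q$ attached to it, after extracting the front factor $[m-l]_q^!$ (from the first $l$ steps, where the partial sum runs $m-1,m-2,\dots$) and the back factor $[m-r]_q^!$, matches $[l]_q^![r]_q^!$ times the analogous product for $\tDelta^{(-m)}_n$, with the power of $q$ absorbed; the $[n]_q$-factorials appear because on the first $l$ steps the bracket argument is $0\cdot(\text{partial sum})+(-m)(1+1)/2 = -m$, then the next reads using partial sum $1$ with a leading $x$, giving $1-m$, etc., producing $[-m]_q[1-m]_q\cdots = \pm[m]_q[m-1]_q\cdots[m-l+1]_q = \pm[m]_q^!/[m-l]_q^!$; matching this against $[l]_q^!$ on the other side is exactly the content of the claimed factor rearrangement.

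The main obstacle I anticipate is precisely this last bookkeeping step: reconciling the $q$-bracket products. One has to be careful that $[n]_q=-[-n]_q$, so the signs $(-m)(\overline a_i+1)/2$ in the bracket arguments generate factors of $\pm1$ that must be tracked and shown to cancel globally (or to combine into the already-cancelling $(-1)^n$ from the $-t^2$ substitution); and one must verify that the ``middle'' part of the bracket product — the factors for indices $l< i\le 2n-r$ — is genuinely invariant (up to the overall $q$-power) under the word-reversal $a_1\cdots a_{2n}\mapsto \zeta(a_{2n})\cdots\zeta(a_1)$, which requires rewriting $\overline a_1+\cdots+\overline a_{i-1}$ in terms of the reversed partial sums $\overline a_{i+1}+\cdots+\overline a_{2n}$ and using $\overline a_1+\cdots+\overline a_{2n}=0$. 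This is a finite, mechanical but delicate identity; I would organize it by proving the per-word statement ``for $w=x^l w' y^r\in\Cat_n$, the coefficient of $w$ in $[m-l]_q^![m-r]_q^! x^{-l}\Delta^{(-m)}(-t)y^{-r}$ equals the coefficient of $x^{-l}\zeta(w)y^{-r}$ in $[l]_q^![r]_q^! t^{l+r-m}y^{l-m}\tDelta^{(-m)}(-t)x^{r-m}$,'' and then summing over $w$. Alternatively, if the earlier paper \cite{uniform} or \cite{PT} already contains a ``left-right symmetry'' statement for $\Delta^{(m)}_n$ of this flavor, I would cite it and reduce \eqref{eq:D&tD} to that plus the elementary $\zeta$-truncation identities above, which would make the proof short.
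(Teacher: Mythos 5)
Your opening reduction---comparing coefficients of $t^n$, with the degree shift $n \leftrightarrow n+m-l-r$ and the cancelling signs---matches the paper's first step, and your computation of the flanking bracket products (the first $l$ letters of $x^lwy^r$ contributing $[-m]_q[1-m]_q\cdots[l-1-m]_q=(-1)^l[m]_q^!/[m-l]_q^!$, the last $r$ letters contributing $[r]_q^!$) is correct and is part of what the paper calls a routine computation. The problem is the central mechanism you propose: conjugating by $\zeta$ via the truncation identities $\zeta(x^{-1}w)=\zeta(w)y^{-1}$ and $\zeta(wy^{-1})=x^{-1}\zeta(w)$. Applying these to $x^{-l}\Delta^{(-m)}_n y^{-r}$ yields $x^{-r}\,\tDelta^{(-m)}_n\, y^{-l}$: the same degree $n$, still a leading-$x$ and trailing-$y$ truncation, with only $l$ and $r$ interchanged. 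The right-hand side of \eqref{eq:D&tD} instead involves $\tDelta^{(-m)}_{n+m-l-r}$ with a leading-$y$ truncation by $m-l$ and a trailing-$x$ truncation by $m-r$. These are structurally different objects, so your claim that the remaining discrepancy is ``purely the $q$-scalar coefficients'' is exactly where the argument breaks. The idea you are missing is the support bijection the paper uses: $w$ survives on the left iff $x^lwy^r$ is a Catalan word of length $2n$ whose partial sums stay in $[0,m]$, and reflecting its lattice path about height $m/2$ --- i.e.\ replacing it by $x^{m-l}\tilde{w}y^{m-r}$, where $\tilde{w}$ is the letter switch of $w$ --- produces a Catalan word of length $2(n+m-l-r)$ of height $\leq m$. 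This single length-changing correspondence accounts simultaneously for the degree shift, for the complementary truncation amounts $m-l$ and $m-r$, and (via $[h-m]_q=-[m-h]_q$) for the matching of the interior bracket factors. Word reversal, which is what your ``obstacle'' paragraph proposes to analyze using $\overline{a}_1+\cdots+\overline{a}_{2n}=0$, preserves length and is not the relevant symmetry.

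Two further defects. First, your final per-word statement compares the coefficient of $w$ on one side with the coefficient of the \emph{different} word $x^{-l}\zeta(w)y^{-r}$ on the other; an identity in $\mV$ must be verified by comparing the coefficient of the same word on both sides, which is what the support bijection above arranges. Second, be careful with $\zeta$: if it literally reverses words (so that $\tDelta^{(-m)}_N$ is supported on Catalan words, which begin with $x$), then $y^{l-m}\tDelta^{(-m)}(-t)$ would vanish identically for $l<m$ and \eqref{eq:D&tD} could not hold; the Example following Definition \ref{def:zeta} and the lemma itself require $\zeta$ to act on these elements as the letter switch, under which your truncation identities become $\zeta(x^{-1}w)=y^{-1}\zeta(w)$ --- a further sign that the $\zeta$-conjugation route does not lead to \eqref{eq:D&tD}.
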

\begin{proof}
For $n \in \mN$, we compare the coefficients of $t^n$ on both sides of \eqref{eq:D&tD}. 

\medskip
\noindent If $l>n$ or $r>n$, by Definition \ref{def:D^m_n} the coefficients of $t^n$ on both sides of \eqref{eq:D&tD} are both $0$. 

\medskip
\noindent Now assume that $l,r \leq n$. It suffices to show that 
\begin{equation}\label{eq:D_n}
(-1)^n[m-l]_q^![m-r]_q^!x^{-l}\Delta^{(-m)}_ny^{-r}
\end{equation}
is equal to 
\begin{equation}\label{eq:tD_n}
(-1)^{n+m-l-r}[l]_q^![r]_q^!t^{l+r-m}y^{l-m}\tDelta^{(-m)}_{n+m-l-r}x^{r-m}. 
\end{equation}

\noindent We write \eqref{eq:D_n} and \eqref{eq:tD_n} as linear combinations of words. We first show that the set of words with nonzero coefficient in \eqref{eq:D_n} is equal to the set of words with nonzero coefficients in \eqref{eq:tD_n}. In fact, 

\begin{center}
a word $w$ has nonzero coefficient in \eqref{eq:D_n} \\
$\Leftrightarrow$ $x^lwy^r$ is a Catalan word of length $2n$ and height $\leq m$\\
$\Leftrightarrow$ $x^{m-l}\tilde{w}y^{m-r}$ is a Catalan word of length $2(n+m-l-r)$ and height $\leq m$, \\
\hspace{12em} where $\tilde{w}$ is obtained from $w$ by switching $x,y$\\
$\Leftrightarrow$ a word $w$ has nonzero coefficient in \eqref{eq:tD_n}. 
\end{center}

\noindent Now, we only need to show that for a word $w$ with nonzero coefficient in \eqref{eq:D_n}, the coefficient of $w$ in \eqref{eq:D_n} is equal to the coefficient of $w$ in \eqref{eq:tD_n}. This can be routinely verified using Definition \ref{def:D^m_n}. 
\end{proof}

\noindent We now give an alternative closed form for the K-matrix. 

\begin{proposition}\label{prop:Kalt}\rm
For $j \in \frac{1}{2}\mN^+$, the $(2j+1) \times (2j+1)$ matrix $K^{(j)}(t)$ is given by 
\begin{equation}\label{eq:Kalt}
K^{(j)}_{(a,b)}(t)=\psi(a,b,j)t^{b-a-2j}y^{b-2j-1}\tDelta^{(-2j)}(-t^2)x^{1-a}, 
\end{equation}
where  
\begin{equation}\label{eq:g}
\psi(a,b,j)=q^{\rho(a,b,j)}\left([2j]_q^!\right)^{-1}\left(\frac{[b-1]_q^![2j+1-a]_q^!}{[a-1]_q^![2j+1-b]_q^!}\right)^\frac{1}{2}
\end{equation}
and $\rho(a,b,j)$ is given in \eqref{eq:h}. 
\end{proposition}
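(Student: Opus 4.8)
The plan is to derive \eqref{eq:Kalt} directly from the original closed form \eqref{eq:K} in Definition \ref{def:K} by applying the symmetry of Lemma \ref{lem:D&tD} to the entries. First I would rewrite the $(a,b)$-entry
\[
K^{(j)}_{(a,b)}(t)=\varphi(a,b,j)\,t^{a-b-2j}\,x^{1-b}\Delta^{(-2j)}(-t^2)\,y^{a-2j-1}
\]
and recognize that $x^{1-b}$ means applying $x^{-1}$ exactly $b-1$ times (and $y^{a-2j-1}$ means applying $y^{-1}$ exactly $2j+1-a$ times, since $a \leq 2j+1$). Thus with $m=2j$, $l=b-1$, $r=2j+1-a$ — which indeed satisfy $0 \le l,r \le m$ for $1 \le a,b \le 2j+1$ — Lemma \ref{lem:D&tD} with $t$ replaced by $t^2$ gives
\[
[2j+1-b]_q^!\,[a-1]_q^!\;x^{1-b}\Delta^{(-2j)}(-t^2)y^{a-2j-1}
=[b-1]_q^!\,[2j+1-a]_q^!\;t^{2(b-a)}\,y^{b-2j-1}\tDelta^{(-2j)}(-t^2)x^{1-a}.
\]
Here I have used $l-m=b-1-2j$ and $r-m=2j+1-a-2j=1-a$, matching the exponents $y^{b-2j-1}$ and $x^{1-a}$ on the right side of \eqref{eq:Kalt}, and $l+r-m = b-a$.

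Next I would substitute this identity into \eqref{eq:K}, solving for the left-hand factor:
\[
K^{(j)}_{(a,b)}(t)=\varphi(a,b,j)\,t^{a-b-2j}\cdot
\frac{[b-1]_q^!\,[2j+1-a]_q^!}{[2j+1-b]_q^!\,[a-1]_q^!}\,t^{2(b-a)}\,y^{b-2j-1}\tDelta^{(-2j)}(-t^2)x^{1-a}.
\]
The power of $t$ combines to $t^{(a-b-2j)+2(b-a)}=t^{b-a-2j}$, which is the exponent claimed in \eqref{eq:Kalt}. It then remains to check that
\[
\varphi(a,b,j)\cdot\frac{[b-1]_q^!\,[2j+1-a]_q^!}{[2j+1-b]_q^!\,[a-1]_q^!}=\psi(a,b,j).
\]
Using \eqref{eq:f}, the left side equals
\[
q^{\rho(a,b,j)}\left([2j]_q^!\right)^{-1}\left(\frac{[a-1]_q^![2j+1-b]_q^!}{[b-1]_q^![2j+1-a]_q^!}\right)^{1/2}\cdot\frac{[b-1]_q^!\,[2j+1-a]_q^!}{[2j+1-b]_q^!\,[a-1]_q^!},
\]
and the product of the square root with the rational factor is exactly $\left(\frac{[b-1]_q^![2j+1-a]_q^!}{[a-1]_q^![2j+1-b]_q^!}\right)^{1/2}$, which matches \eqref{eq:g}. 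Since $\rho(a,b,j)$ is unchanged, this completes the identification.

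I do not anticipate a serious obstacle here; the entire argument is a bookkeeping exercise in matching exponents of $t$, powers of $q$, and $q$-factorials, with the only substantive input being Lemma \ref{lem:D&tD}, which is already established. The one point requiring mild care is the correct translation between the ``negative exponent'' notation $x^{1-b}$, $y^{a-2j-1}$ of Definition \ref{def:-1} and the notation $x^{-l}$, $y^{-r}$ of Lemma \ref{lem:D&tD}: I would state explicitly that for $1\le a,b\le 2j+1$ we have $x^{1-b}=x^{-(b-1)}$ and $y^{a-2j-1}=y^{-(2j+1-a)}$, so that $l=b-1$ and $r=2j+1-a$ lie in the allowed range $[0,m]$ with $m=2j$, after which the substitution is immediate and the half-integer arithmetic in $\rho$ plays no role.
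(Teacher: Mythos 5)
Your proposal is correct and follows exactly the route of the paper, whose proof of Proposition \ref{prop:Kalt} is the one-line remark that it ``follows from Definition \ref{def:K} and Lemma \ref{lem:D&tD}''; you have simply carried out the substitution $m=2j$, $l=b-1$, $r=2j+1-a$ (with $t\mapsto t^2$) and the resulting bookkeeping explicitly. The exponent of $t$, the $q$-factorial cancellation turning $\varphi$ into $\psi$, and the range check $0\le l,r\le m$ are all verified correctly.
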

\begin{proof}
Follows from Definition \ref{def:K} and Lemma \ref{lem:D&tD}. 
\end{proof}

\noindent The following lemma will be useful for obtaining a recurrence relation for $K^{(j)}(t)$. 

\begin{lemma}\label{lem:Drecur}\rm
For $m,l,r \in \mN$ with $l \leq m$ and $1 \leq r \leq m$, 
\begin{equation}\label{eq:Drecur1}
\begin{split}
x^{-l}\Delta^{(-m-1)}(-t^2)y^{-r}&=q^{2l}[m+1]_qt^2W^-(q^mt^2) \star x^{-l}\Delta^{(-m)}(-q^{-1}t^2)y^{1-r}\\
&+q^{l-1}[l]_q[m+1]_qtG(q^mt^2) \star x^{1-l}\Delta^{(-m)}(-q^{-1}t^2)y^{1-r}, 
\end{split}
\end{equation}
\begin{equation}\label{eq:Drecur2}
\begin{split}
y^{-l}\tDelta^{(-m-1)}(-t^2)x^{-r}&=q^{2l}[m+1]_qtW^+(q^mt^2) \star y^{-l}\tDelta^{(-m)}(-q^{-1}t^2)x^{1-r}\\
&+q^{l-1}[l]_q[m+1]_qt\tG(q^mt^2) \star y^{1-l}\tDelta^{(-m)}(-q^{-1}t^2)x^{1-r}. 
\end{split}
\end{equation}
\end{lemma}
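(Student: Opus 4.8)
The two identities \eqref{eq:Drecur1} and \eqref{eq:Drecur2} are related by the antiautomorphism $\zeta$ (applying $\zeta$ to \eqref{eq:Drecur1}, using $\zeta(\Delta^{(-m)}(t)) = \tDelta^{(-m)}(t)$, $\zeta(W^-(t))=W^+(t)$, $\zeta(G(t))=\tG(t)$, and $\zeta(v\star w)=\zeta(w)\star\zeta(v)$, and relabeling the deletion indices), so the plan is to prove \eqref{eq:Drecur1} and then deduce \eqref{eq:Drecur2} in one line. For \eqref{eq:Drecur1}, the plan is to work at the level of the word coefficients: extract the coefficient of $t^{2n}$ on each side and show equality for every $n \in \mN$. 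The left side contributes $(-1)^n x^{-l}\Delta^{(-m-1)}_n y^{-r}$; by Definition \ref{def:D^m_n} this is a sum over Catalan words $a_1\cdots a_{2n} \in \Cat_n$ of the form $x^l w y^r$ with height $\le m+1$, each weighted by the product $\prod_{i=1}^{2n}[\sigma_{i-1}-(m+1)(\overline a_i+1)/2]_q$ where $\sigma_i = \overline a_1+\cdots+\overline a_i$. On the right side, the factors $W^-(q^m t^2)$ and $G(q^m t^2)$ contribute the alternating words $W_{-k}=x(yx)^k$ and $G_k=(yx)^k$ (as plain concatenations, since these are the leading PBW terms — here I would use that the $\star$-product of a single alternating generator with a word deletes/prepends at the front, or more directly expand via the Green formulas for $\star$), so that after the deletions $x^{-l}(\cdots)y^{1-r}$ and $x^{1-l}(\cdots)y^{1-r}$ one again gets sums of words of the shape $x^l w y^r$ with a $q$-power and $[l]_q$, $[m+1]_q$ prefactors.

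The key combinatorial step is a \textbf{first-return decomposition} of a Catalan word of height $\le m+1$. Given $x^l w y^r \in \Cat_n$ with height $\le m+1$ and $r \ge 1$: look at the structure near the end, or equivalently strip the outermost "arch". A height-$\le m+1$ Catalan path that reaches height $m+1$ must do so by going up one step from a height-$\le m$ subpath; writing the path as a concatenation governed by where it last touches height $m$ (or, symmetrically, splitting off the first letter $x$ and the matching $y$) expresses the summand as a contribution of either $W^-$ (when the stripped arch is "$xy$" adjacent, giving the $W_{-k}$ piece) or $G$ (the $(yx)^k$ piece), paired with a residual Catalan word of height $\le m$ that is exactly what $\Delta^{(-m)}(-q^{-1}t^2)$ enumerates. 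The shift $t^2 \mapsto q^{-1}t^2$ inside $\Delta^{(-m)}$ and the $q^m t^2$ inside $W^-, G$ are precisely the bookkeeping needed so that the weight $\prod[\sigma_{i-1}-(m+1)(\overline a_i+1)/2]_q$ of the big word factors as (weight of the $W^-$/$G$ part at parameter $m$, shifted) times (weight of the residual word at parameter $m$, shifted by $q^{-1}$); the extra $q^{2l}$, $q^{l-1}[l]_q$ arise from commuting the deletions $x^{-l}$ past the prepended letters and from the shift in the "+1" terms of the first $l$ factors. I would verify this weight factorization by a direct telescoping computation on the product $\prod_{i}[\cdots]_q$, splitting the index range at the return point.

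The main obstacle is getting the $q$-power bookkeeping exactly right: matching $q^{2l}[m+1]_q$ versus $q^{l-1}[l]_q[m+1]_q$, and the internal shifts $q^m t^2$ and $q^{-1}t^2$, against the telescoped product of $q$-integers. This is where a sign/exponent slip is most likely, so I would organize it as: (1) fix a Catalan word $x^l w y^r$ of height $\le m+1$; (2) identify whether its first-return arch contributes to the $W^-$ term or the $G$ term, and what residual word $w'$ (with $x^l w' y^{r-1}$ Catalan of height $\le m$) it leaves; (3) show the product weight of $x^l w y^r$ at parameter $-(m+1)$ equals the stated prefactor times the product weight of the alternating piece times the product weight of $x^l w' y^{r-1}$ at parameter $-m$ with the $q^{-1}$-shift; (4) sum over all $w$ and recognize the two generating-function products. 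Steps (1)–(2) are the combinatorial heart and are clean once the right decomposition is chosen; step (3) is the routine but delicate calculation; step (4) is immediate. Finally, apply $\zeta$ to obtain \eqref{eq:Drecur2}, noting that $\zeta$ fixes each scalar coefficient and reverses/swaps the deletion operators so that $x^{-l}(\cdot)y^{-r} \mapsto y^{-r}(\cdot)x^{-l}$ after relabeling.
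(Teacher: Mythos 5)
Your reduction of \eqref{eq:Drecur2} to \eqref{eq:Drecur1} via $\zeta$ is exactly what the paper does, and your identification of which words carry nonzero coefficients (those $w$ with $x^lwy^r$ Catalan of height $\le m+1$) is sound. But the core of your argument for \eqref{eq:Drecur1} has a genuine gap: you treat the $q$-shuffle product $\star$ on the right-hand side as if it were concatenation. You assert that $W^-(q^mt^2)$ and $G(q^mt^2)$ ``contribute the alternating words $\ldots$ as plain concatenations'' and that ``the $\star$-product of a single alternating generator with a word deletes/prepends at the front''; this is false. By the Green formulas, $W_{-k} \star v$ is a $q$-weighted sum over \emph{all} interleavings of $W_{-k}$ with $v$, so the coefficient of a fixed word $x^lwy^r$ on the right-hand side is a sum over many pairs (alternating word, residual word) and, for each pair, over many shuffles. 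Your proposed first-return decomposition assigns each Catalan word to exactly \emph{one} term of \emph{one} product, which cannot match this structure; what you would actually have to prove is that the total $q$-weighted sum over all shuffle decompositions collapses to the single Catalan weight $\prod_i[\sigma_{i-1}-(m+1)(\overline{a}_i+1)/2]_q$. That collapse is a nontrivial identity --- essentially the content of the cited machinery --- and your outline gives no mechanism for it; steps (2)--(3) as written would simply fail.

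The paper avoids this entirely by working at the level of generating-function identities already established in the shuffle algebra: it starts from the factorization $\Delta^{(-m-1)}(-t^2)=\tG(q^{m}t^2)\star\tG(q^{m-2}t^2)\star\cdots\star\tG(q^{-m}t^2)$ (from \cite[Theorem 2.25(i)]{uniform}), applies $y^{-1}$ on the right using \cite[Lemma 9.2]{ter_alternating} to produce the $[m+1]_qt^2W^-(q^mt^2)\star\Delta^{(-m)}(-q^{-1}t^2)$ term, iterates $y^{-1}$ (harmless since every word in $W^-$ ends in $x$), and then establishes the $l$-dependence by induction, applying $x^{-1}$ on the left and using the Leibniz-type behaviour of $x^{-1}$ on $\star$-products (which is where the $G$ term and the factors $q^{2l}$, $q^{l-1}[l]_q$ come from). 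If you want to keep a coefficient-by-coefficient combinatorial proof, you would need to first prove a shuffle-summation lemma replacing those citations, which is a substantially larger undertaking than your step (3) suggests.
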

\begin{proof}
We first show \eqref{eq:Drecur1}. 

\medskip
\noindent By \cite[Theorem 2.25(i)]{uniform}, we have 
\begin{equation}\label{eq:D=}
\Delta^{(-m-1)}(-t^2)=\tG(q^{m}t^2) \star \tG(q^{m-2}t^2) \star \cdots \star \tG(q^{-m}t^2). 
\end{equation}

\noindent On both sides of \eqref{eq:D=} apply $y^{-1}$ on the right and simplify the result using \cite[Lemma 9.2]{ter_alternating}. This yields 
\begin{equation}\label{eq:Dy^-1=}
\Delta^{(-m-1)}(-t^2)y^{-1}=[m+1]_qt^2W^-(q^mt^2) \star \Delta^{(-m)}(-q^{-1}t^2). 
\end{equation}

\noindent On both sides of \eqref{eq:Dy^-1=} apply $y^{-1}$ on the right for $1-r$ times. Since all the words appearing in the expression for $W^-(q^mt^2)$ ends with $x$, this yields 
\begin{equation}\label{eq:Dy^1-r=}
\Delta^{(-m-1)}(-t^2)y^{-r}=[m+1]_qt^2W^-(q^mt^2) \star \Delta^{(-m)}(-q^{-1}t^2)y^{1-r}. 
\end{equation}

\noindent Now we show \eqref{eq:Drecur1} by induction on $l$. 

\medskip
\noindent The case $l=0$ is exactly \eqref{eq:Dy^1-r=}, which we have just showed. 

\medskip
\noindent Assume \eqref{eq:Drecur1} holds for an $l \in \mN$, then we have 
\begin{align*}
x^{-l-1}\Delta^{(-m-1)}(-t^2)y^{-r}&=x^{-1}\left(q^{2l}[m+1]_qt^2W^-(q^mt^2) \star x^{-l}\Delta^{(-m)}(-q^{-1}t^2)y^{1-r}\right)\\
&+x^{-1}\left(q^{l-1}[l]_q[m+1]_qt^2G(q^mt^2) \star x^{1-l}\Delta^{(-m)}(-q^{-1}t^2)y^{1-r}\right)\\
&=q^{2l+2}[m+1]_qt^2W^-(q^mt^2) \star x^{-l-1}\Delta^{(-m)}(-q^{-1}t^2)y^{1-r}\\
&+q^{2l}[m+1]_qt^2G(q^mt^2) \star x^{-l}\Delta^{(-m)}(-q^{-1}t^2)y^{1-r}\\
&+q^{l-1}[l]_q[m+1]_qt^2G(q^mt^2) \star x^{-l}\Delta^{(-m)}(-q^{-1}t^2)y^{1-r}\\
&=q^{2l+2}[m+1]_qt^2W^-(q^mt^2) \star x^{-l-1}\Delta^{(-m)}(-q^{-1}t^2)y^{1-r}\\
&+q^{l}[l+1]_q[m+1]_qt^2G(q^mt^2) \star x^{-l}\Delta^{(-m)}(-q^{-1}t^2)y^{1-r}. 
\end{align*}

\noindent We have showed \eqref{eq:Drecur1}. 

\medskip
\noindent Applying $\zeta$ to both sides of \eqref{eq:Drecur1}, we obtain \eqref{eq:Drecur2}. 
\end{proof}

\begin{remark}\label{rm:Drecur}\rm
Using Lemma \ref{lem:Drecur}, we can recursively write entries of the K-matrix $K^{(j)}(t)$ in terms of the generating functions $W^-(t),W^+(t),\tG(t),G(t)$. It is straightforward to verify that, in particular, entries on first (or last) row (or column) can be written in closed form in terms of $W^-(t),W^+(t),\tG(t),G(t)$. 
\end{remark}

\noindent We are now ready to show the recurrence relation for $K^{(j)}(t)$. We remark that this result is an analog of the fusion technique of the K-matrices in \cite[Definition 5.6]{LBG}. 

\begin{proposition}\label{prop:Krecur}\rm
For $j \in \frac{1}{2}\mN^+$, 
\begin{equation}\label{eq:Krecur}
K^{(j+\frac{1}{2})}(t)=\cF^{(j+\frac{1}{2})} \star K^{(\frac{1}{2})}_{1}(q^{j}t) \star \hR^{(\frac{1}{2},j)} \star K^{(j)}_{2}(q^{-\frac{1}{2}}t) \star \cE^{(j+\frac{1}{2})}. 
\end{equation}
\end{proposition}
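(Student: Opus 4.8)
The plan is to verify \eqref{eq:Krecur} entrywise, reducing it to the recurrences already available for the generating functions $\Delta^{(m)}(t)$ and $\tDelta^{(m)}(t)$ from Lemma~\ref{lem:Drecur}. First I would unpack the right-hand side of \eqref{eq:Krecur}. The matrices $\cF^{(j+\frac{1}{2})}$ and $\cE^{(j+\frac{1}{2})}$ have scalar entries, so the $q$-shuffle product against them is ordinary matrix multiplication; the only genuine $\star$-products occur between $K^{(\frac{1}{2})}_1(q^{j}t)$ and $K^{(j)}_2(q^{-\frac{1}{2}}t)$ (the diagonal $\hR^{(\frac{1}{2},j)}$ contributing only scalars). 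Writing out these two K-matrices in the block form over a tensor product $\mC^2\otimes\mC^{2j+1}$, and using Definitions~\ref{def:E}, \ref{def:F}, \ref{def:hR}, \ref{def:K}, the $(a,b)$-entry of the right-hand side becomes an explicit $\mF$-linear combination of terms of the shape
\begin{equation*}
q^{(\ast)}t^{(\ast)}\, \bigl(\text{entry of }K^{(\frac{1}{2})}\text{ at }q^j t\bigr) \star \bigl(\text{entry of }K^{(j)}\text{ at }q^{-\frac{1}{2}}t\bigr),
\end{equation*}
where the entries of $K^{(\frac12)}$ are $W^-,W^+,\tG,G$ (up to powers of $t$) by Definition~\ref{def:K1/2}, and the entries of $K^{(j)}$ are the $x^{1-b}\Delta^{(-2j)}(-t^2)y^{a-2j-1}$ (or their $\tDelta$-counterparts via Proposition~\ref{prop:Kalt}). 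By Definitions~\ref{def:E}, \ref{def:F} the index structure of $\cE^{(j+\frac12)},\cF^{(j+\frac12)}$ forces at most two nonzero contributions per $(a,b)$-entry, matching the two-term shape of \eqref{eq:Drecur1}–\eqref{eq:Drecur2}.

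Next I would match these two contributions, term by term, against Lemma~\ref{lem:Drecur} with $m=2j$. The key observation is that \eqref{eq:Drecur1} expresses $x^{-l}\Delta^{(-2j-1)}(-t^2)y^{-r}$ precisely as $W^-$ (resp.\ $G$) $\star$-composed with $x^{-l}\Delta^{(-2j)}(-q^{-1}t^2)y^{1-r}$ (resp.\ $x^{1-l}\Delta^{(-2j)}(\cdots)y^{1-r}$) — and after the substitution $t^2 \mapsto q\,t^2$ this reads as a statement about $\Delta^{(-2j-1)}(-qt^2)$ in terms of $W^-(q^{2j}t^2),G(q^{2j}t^2)$ and $\Delta^{(-2j)}(-t^2)$, which is exactly the $t$-rescaling pattern $q^j t$ versus $q^{-\frac12}t$ appearing in \eqref{eq:Krecur} after squaring. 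So the content of \eqref{eq:Krecur} on each $(a,b)$-entry is: the scalar prefactors produced by $\cF^{(j+\frac12)}\cdot(\text{stuff})\cdot\cE^{(j+\frac12)}$ together with $\varphi(\cdot,\cdot,\tfrac12)$, $\varphi(\cdot,\cdot,j)$ and the powers of $q$ from $\hR^{(\frac12,j)}$ must collapse to $\varphi(a,b,j+\tfrac12)$ times the coefficients $q^{2l}[2j+1]_q$, $q^{l-1}[l]_q[2j+1]_q$ appearing in \eqref{eq:Drecur1}. This is a bookkeeping identity among the closed-form expressions \eqref{eq:f}, \eqref{eq:h}, Definitions~\ref{def:E}, \ref{def:F}: verifying that the quadratic exponent $\rho$ transforms correctly as $j \to j+\tfrac12$ and $a,b$ shift by the index offsets $0$ or $2j+1$ coming from the block structure of $\cE,\cF$, and that the square-root factors in $\varphi$ telescope against those of $\cE^{(j+\frac12)}_{(a,a)}$, $\cF^{(j+\frac12)}_{(b,b)}$ and their $2j+1$-shifted partners.

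To keep the argument clean I would split the entry check into the generic interior case and the boundary cases. For a fully interior $(a,b)$-entry, both nonzero blocks of $\cE^{(j+\frac12)}$ and $\cF^{(j+\frac12)}$ contribute, so four cross-terms appear; two of them must cancel and the remaining two must be identified with the two terms of \eqref{eq:Drecur1} (using that $W^-(t)$ begins with $W_0 = x$, so applying $y^{-1}$ on the right annihilates nothing spurious, exactly as in the proof of Lemma~\ref{lem:Drecur}). For $a$ or $b$ equal to $1$ or $2j+2$ one of the blocks drops out and a single term survives, matching the $l=0$ base case \eqref{eq:Dy^1-r=} or its $\tDelta$-analog; here Proposition~\ref{prop:Kalt} is what lets me write the same entry either with $\Delta$ (peeling $y$'s) or with $\tDelta$ (peeling $x$'s), whichever side of the block the surviving term lives on. I expect the main obstacle to be purely computational: pinning down the exponent of $q$ so that $\rho(a,b,j+\tfrac12)$ emerges from $\rho(\cdot,\cdot,\tfrac12)+\rho(\cdot,\cdot,j)$ plus the $\hR^{(\frac12,j)}$ contribution $2\,(\pm\frac12)\,(j{-}i{+}1)$ plus the half-integer powers hidden in $c(q)c(q^2)\cdots$ via $\cH$; this is a finite quadratic-polynomial identity in $a,b,j$ that I would verify once and for all by direct substitution, treating the two block-positions $i\in\{1,2\}$ of the $\mC^2$-factor separately. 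No new structural input is needed — Lemma~\ref{lem:Drecur}, Proposition~\ref{prop:Kalt}, and Definitions~\ref{def:E}–\ref{def:K} suffice.
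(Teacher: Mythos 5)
Your overall strategy is exactly the paper's: expand the right-hand side entrywise, write the resulting $\star$-products of $K^{(\frac12)}$-entries with $K^{(j)}$-entries using both the $\Delta$-form \eqref{eq:K} and the $\tDelta$-form \eqref{eq:Kalt}, collapse the scalar prefactors into $\varphi(a,b,j+\tfrac12)$ and $\psi(a,b,j+\tfrac12)$ via \eqref{eq:f} and \eqref{eq:g}, and then invoke Lemma \ref{lem:Drecur} with $m=2j$ to recognize $\Delta^{(-2j-1)}$ and $\tDelta^{(-2j-1)}$; the $q^{j}t$ versus $q^{-\frac12}t$ rescaling is absorbed exactly as you predict.

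One structural prediction is wrong, however: for an interior $(a,b)$-entry the four cross-terms do \emph{not} split into two that cancel and two that match \eqref{eq:Drecur1}. In the actual computation all four survive and pair off: the $W^-$- and $G$-terms combine via \eqref{eq:Drecur1} into $\frac{[2j+2-a]_q}{[2j+2-a]_q+[a-1]_q}\,\varphi(a,b,j+\tfrac12)\,t^{a-b-2j-1}x^{1-b}\Delta^{(-2j-1)}(-t^2)y^{a-2j-2}$, while the $\tG$- and $W^+$-terms combine via \eqref{eq:Drecur2} into $\frac{[a-1]_q}{[2j+2-a]_q+[a-1]_q}\,\psi(a,b,j+\tfrac12)\,t^{b-a-2j-1}y^{b-2j-2}\tDelta^{(-2j-1)}(-t^2)x^{1-a}$. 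Each of these is the indicated fraction of $K^{(j+\frac12)}_{(a,b)}(t)$ --- the first by \eqref{eq:K}, the second by \eqref{eq:Kalt} --- and the two fractions sum to $1$. So Proposition \ref{prop:Kalt} is not merely a convenience for boundary entries; it is the device that identifies the two halves as complementary portions of the \emph{same} entry in every position. Since you already have all the right lemmas in hand, this is a correction to your bookkeeping rather than a missing idea, but the cancellation step as you describe it would fail, and without the ``fractions summing to one'' observation the final identification with $K^{(j+\frac12)}_{(a,b)}(t)$ does not go through.
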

\begin{proof}
We first clarify some abuse of notation in this proof. There will be some undefined terms for certain values of $a,b$. For example, the term $K^{(j)}_{(a,b-1)}$ is not defined when $b=1$. As we will see, each undefined term is always multiplied by a zero and thus does not impair the proof. 

\medskip
\noindent For $1 \leq a,b \leq 2j+2$, the $(a,b)$-entry of the right-hand side of \eqref{eq:Krecur} is equal to 
\begin{align*}
&q^{j+1-a}\frac{([2j+2-a]_q[2j+2-b]_q)^\frac{1}{2}}{[2j+2-a]_q+[a-1]_q}K^{(\frac{1}{2})}_{(1,1)}(q^{j}t) \star K^{(j)}_{(a,b)}(q^{-\frac{1}{2}}t)\\
+&q^{a-j-1}\frac{([2j+2-a]_q[b-1]_q)^\frac{1}{2}}{[2j+2-a]_q+[a-1]_q}K^{(\frac{1}{2})}_{(1,2)}(q^{j}t) \star K^{(j)}_{(a,b-1)}(q^{-\frac{1}{2}}t)\\
+&q^{j+2-a}\frac{([a-1]_q[2j+2-b]_q)^\frac{1}{2}}{[2j+2-a]_q+[a-1]_q}K^{(\frac{1}{2})}_{(2,1)}(q^{j}t) \star K^{(j)}_{(a-1,b)}(q^{-\frac{1}{2}}t)\\
+&q^{a-j-2}\frac{([a-1]_q[b-1]_q)^\frac{1}{2}}{[2j+2-a]_q+[a-1]_q}K^{(\frac{1}{2})}_{(2,2)}(q^{j}t) \star K^{(j)}_{(a-1,b-1)}(q^{-\frac{1}{2}}t).  
\end{align*}

\noindent Applying \eqref{eq:K}, \eqref{eq:Kalt}, we obtain 
\begin{align*}
&q^{3j-\frac{3}{2}a+\frac{1}{2}b+2}\frac{([2j+2-a]_q[2j+2-b]_q)^\frac{1}{2}}{[2j+2-a]_q+[a-1]_q}\varphi(a,b,j)\\
&\hspace{14em} t^{a-b-2j+1}W^-(q^{2j}t^2) \star x^{1-b}\Delta^{(-2j)}(-q^{-1}t^2)y^{a-2j-1}\\
+&q^{\frac{1}{2}a+\frac{1}{2}b-\frac{3}{2}}\frac{([2j+2-a]_q[b-1]_q)^\frac{1}{2}}{[2j+2-a]_q+[a-1]_q}\varphi(a,b-1,j)\\
&\hspace{14em} t^{a-b-2j+1}G(q^{2j}t^2) \star x^{2-b}\Delta^{(-2j)}(-q^{-1}t^2)y^{a-2j-1}\\
+&q^{2j-\frac{1}{2}a-\frac{1}{2}b+\frac{3}{2}}\frac{([a-1]_q[2j+2-b]_q)^\frac{1}{2}}{[2j+2-a]_q+[a-1]_q}\psi(a-1,b,j)\\
&\hspace{14em} t^{b-a-2j+1}\tG(q^{2j}t^2) \star y^{b-2j-1}\tDelta^{(-2j)}(-q^{-1}t^2)x^{2-a}\\
+&q^{j+\frac{3}{2}a-\frac{1}{2}b-1}\frac{([a-1]_q[b-1]_q)^\frac{1}{2}}{[2j+2-a]_q+[a-1]_q}\psi(a-1,b-1,j)\\
&\hspace{14em} t^{b-a-2j+1}W^+(q^{2j}t^2) \star y^{b-2j-2}\tDelta^{(-2j)}(-q^{-1}t^2)x^{2-a}. 
\end{align*}

\noindent Using \eqref{eq:f}, \eqref{eq:g} to compute the coefficients, we obtain 
\begin{align*}
&q^{2b-2}[2j+1]_q\frac{[2j+2-a]_q}{[2j+2-a]_q+[a-1]_q}\varphi(a,b,j+\tfrac{1}{2})\\
&\hspace{14em} t^{a-b-2j+1}W^-(q^{2j}t^2) \star x^{1-b}\Delta^{(-2j)}(-q^{-1}t^2)y^{a-2j-1}\\
+&q^{b-2}[2j+1]_q\frac{[b-1]_q[2j+2-a]_q}{[2j+2-a]_q+[a-1]_q}\varphi(a,b,j+\tfrac{1}{2})\\
&\hspace{14em} t^{a-b-2j+1}G(q^{2j}t^2) \star x^{2-b}\Delta^{(-2j)}(-q^{-1}t^2)y^{a-2j-1}\\
+&q^{2j-b+1}[2j+1]_q\frac{[a-1]_q[2j+2-b]_q}{[2j+2-a]_q+[a-1]_q}\psi(a,b,j+\tfrac{1}{2})\\
&\hspace{14em} t^{b-a-2j+1}\tG(q^{2j}t^2) \star y^{b-2j-1}\tDelta^{(-2j)}(-q^{-1}t^2)x^{2-a}\\
+&q^{4j-2b+4}[2j+1]_q\frac{[a-1]_q}{[2j+2-a]_q+[a-1]_q}\psi(a,b,j+\tfrac{1}{2})\\
&\hspace{14em} t^{b-a-2j+1}W^+(q^{2j}t^2) \star y^{b-2j-2}\tDelta^{(-2j)}(-q^{-1}t^2)x^{2-a}, 
\end{align*}

\noindent Now we apply Lemma \ref{lem:Drecur} and obtain 
\begin{align*}
&\frac{[2j+2-a]_q}{[2j+2-a]_q+[a-1]_q}\varphi(a,b,j+\tfrac{1}{2})t^{a-b-2j-1}x^{1-b}\Delta^{(-2j-1)}(-t^2)y^{a-2j-2}\\
+&\frac{[a-1]_q}{[2j+2-a]_q+[a-1]_q}\psi(a,b,j+\tfrac{1}{2})t^{b-a-2j-1}y^{b-2j-2}\tDelta^{(-2j-1)}(-t^2)x^{1-a}. 
\end{align*}

\noindent By \eqref{eq:K}, \eqref{eq:Kalt}, this is equal to $K^{(j+\frac{1}{2})}_{(a,b)}(t)$ as desired. 
\end{proof}

\section{The Freidel-Maillet type equation}
\noindent In this section, we show Theorem \ref{thm:RKRK} using induction. The proof technique in this section is along the lines of \cite[Section 5.2]{LBG}; see also \cite[Section 4.3]{RSV2}. We first show two lemmas that contribute to the inductive step. 

\begin{lemma}\label{lem:RKRKind1}\rm
Given $j_1,j_2 \in \frac{1}{2}\mN^+$. Suppose we have \\
\begin{equation}\label{eq:RKRKind1.1}
R^{(\frac{1}{2},j_1)}(t/s) \star K^{(\frac{1}{2})}_{1}(s) \star \hR^{(\frac{1}{2},j_1)} \star K^{(j_1)}_{2}(t^)=K^{(j_1)}_{2}(t) \star \hR^{(\frac{1}{2},j_1)} \star K^{(\frac{1}{2})}_{1}(s) \star R^{(\frac{1}{2},j_1)}(t/s), 
\end{equation}
\begin{equation}\label{eq:RKRKind1.2}
R^{(\frac{1}{2},j_2)}(t/s) \star K^{(\frac{1}{2})}_{1}(s) \star \hR^{(\frac{1}{2},j_2)} \star K^{(j_2)}_{2}(t)=K^{(j_2)}_{2}(t) \star \hR^{(\frac{1}{2},j_2)} \star K^{(\frac{1}{2})}_{1}(s) \star R^{(\frac{1}{2},j_2)}(t/s), 
\end{equation}
\begin{equation}\label{eq:RKRKind1.3}
R^{(j_1,j_2)}(t/s) \star K^{(j_1)}_{1}(s) \star \hR^{(j_1,j_2)} \star K^{(j_2)}_{2}(t)=K^{(j_2)}_{2}(t) \star \hR^{(j_1,j_2)} \star K^{(j_1)}_{1}(s) \star R^{(j_1,j_2)}(t/s). 
\end{equation}
Then 
\begin{align}\label{eq:RKRKind1result}
\begin{split}
&R^{(j_1+\frac{1}{2},j_2)}(t/s) \star K^{(j_1+\frac{1}{2})}_{1}(s) \star \hR^{(j_1+\frac{1}{2},j_2)} \star K^{(j_2)}_{2}(t)\\
&\hspace{10em}=K^{(j_2)}_{2}(t) \star \hR^{(j_1+\frac{1}{2},j_2)} \star K^{(j_1+\frac{1}{2})}_{1}(s) \star R^{(j_1+\frac{1}{2},j_2)}(t/s). 
\end{split}
\end{align}
\end{lemma}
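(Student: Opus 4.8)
\textbf{Proof proposal for Lemma~\ref{lem:RKRKind1}.}

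The plan is to use the fusion recurrence for the K-matrix, namely \eqref{eq:Krecur} with $j=j_1$, together with the analogous fusion recurrence \eqref{eq:Rrecur1} for the R-matrix and \eqref{eq:hRrecur1} for the $\widehat{\text{R}}$-matrix, to reduce the index $j_1+\tfrac{1}{2}$ identity \eqref{eq:RKRKind1result} to the three hypothesized identities \eqref{eq:RKRKind1.1}--\eqref{eq:RKRKind1.3} plus the known Yang--Baxter-type relations from Sections 4 and 5. First I would introduce an auxiliary space: label the $\mF^{2j_1+2}$ factor on the left of \eqref{eq:RKRKind1result} as the fused product of an $\mF^{2}$ factor (call it space $0$) and an $\mF^{2j_1+1}$ factor (call it space $1$), with the $\mF^{2j_2+1}$ factor on the right being space $2$. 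Then $\cF^{(j_1+\frac12)}_{01}$, $\cE^{(j_1+\frac12)}_{01}$ are the fusion intertwiners, and \eqref{eq:Krecur} reads $K^{(j_1+\frac12)}_1(s) = \cF^{(j_1+\frac12)}_{01}\star K^{(\frac12)}_0(q^{j_1}s)\star \hR^{(\frac12,j_1)}_{01}\star K^{(j_1)}_1(q^{-\frac12}s)\star \cE^{(j_1+\frac12)}_{01}$ (with appropriate relabeling of the subscripts $1,2$ of Proposition~\ref{prop:Krecur} to $0,1$), and similarly $R^{(j_1+\frac12,j_2)}_{(01)2}(t/s)=\cF^{(j_1+\frac12)}_{01}R^{(\frac12,j_2)}_{02}(q^{-j_1}t/s)R^{(j_1,j_2)}_{12}(q^{\frac12}t/s)\cE^{(j_1+\frac12)}_{01}$ and $\hR^{(j_1+\frac12,j_2)}_{(01)2}=\cF^{(j_1+\frac12)}_{01}\hR^{(\frac12,j_2)}_{02}\hR^{(j_1,j_2)}_{12}\cE^{(j_1+\frac12)}_{01}$.

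Next I would substitute all three fused expressions into the left-hand side of \eqref{eq:RKRKind1result}. Using $\cF^{(j_1+\frac12)}\cE^{(j_1+\frac12)}=\mI$ from \eqref{eq:FE} to collapse the inner $\cE$--$\cF$ pairs that become adjacent (after moving the outer $\cF_{01}$, $\cE_{01}$ past factors acting on space $2$, with which they commute), the left-hand side becomes $\cF^{(j_1+\frac12)}_{01}$ times a product, over spaces $0,1,2$, of four blocks---an $R_{02}R_{12}$ block, a $K_0 \star \hR_{01}\star K_1$ block from the fused $K$, an $\hR_{02}\hR_{12}$ block, and the $K_2$ block---times $\cE^{(j_1+\frac12)}_{01}$. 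The goal is to reorder this product into the mirror-image form (with $K_2$ first, then $\hR_{02}\hR_{12}$, then $K_0\star\hR_{01}\star K_1$, then $R_{12}R_{02}$) so that refusing via \eqref{eq:Rrecur1}, \eqref{eq:hRrecur1}, \eqref{eq:Krecur} again yields the right-hand side of \eqref{eq:RKRKind1result}. This reordering is carried out by a sequence of local moves: first pull $K^{(j_1)}_1(s)$ (the innermost piece of the fused $K_1$) rightward past $\hR^{(\frac12,j_2)}_{02}\,\hR^{(j_1,j_2)}_{12}$ and then past $K^{(j_2)}_2(t)$ using hypothesis \eqref{eq:RKRKind1.3} in the spaces $1,2$ (with the $\widehat{\text{R}}$ factor in space $02$ commuted through since it acts trivially on $1$, or rather absorbed via \eqref{eq:hRhRR4}-type relations); then do the same with $K^{(\frac12)}_0(q^{j_1}s)$ past the $\hR$ block and past $K^{(j_2)}_2(t)$ using hypothesis \eqref{eq:RKRKind1.2} in spaces $0,2$; and intersperse uses of the $R$--$\widehat{\text{R}}$ hexagon relations \eqref{eq:hRhRR1}--\eqref{eq:hRhRR5} and the Yang--Baxter equations \eqref{eq:RRR1}--\eqref{eq:RRR3} to move the $R_{02}$, $R_{12}$ factors from the far left to the far right. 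Finally I would refold the result with \eqref{eq:FE}, \eqref{eq:Rrecur1}, \eqref{eq:hRrecur1}, \eqref{eq:Krecur} to recognize it as $K^{(j_2)}_2(t)\star\hR^{(j_1+\frac12,j_2)}\star K^{(j_1+\frac12)}_1(s)\star R^{(j_1+\frac12,j_2)}(t/s)$.

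The main obstacle I anticipate is bookkeeping the spectral parameters and the $q$-power shifts introduced by fusion: the fused recurrences evaluate the constituent $R$-matrices at arguments $q^{-j_1}t/s$ and $q^{\frac12}t/s$, and the fused $K_1(s)$ introduces arguments $q^{j_1}s$ and $q^{-\frac12}s$, so when I invoke \eqref{eq:RKRKind1.2} and \eqref{eq:RKRKind1.3} the ratio of the two spectral parameters appearing there must match exactly the argument of the corresponding $R$-matrix factor that is present---this is precisely the kind of constraint that forces the particular $q$-shifts chosen in Definitions~\ref{def:R} and \ref{def:K} and in Proposition~\ref{prop:Krecur}, and it needs to be checked that $q^{-j_1}t/s = (q^{j_1}\!\cdot\! q^{-j_1}\,t)/(q^{j_1}s)$-type cancellations actually line up. A secondary subtlety is ensuring that every ``undefined'' matrix-index term that appears transiently (as flagged in the proof of Proposition~\ref{prop:Krecur}) is killed by a vanishing $\cE$ or $\cF$ entry, so that the formal manipulations with the $q$-shuffle product $\star$ are legitimate entrywise. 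Once the parameter matching is verified, the remainder is a finite, if lengthy, reordering argument entirely parallel to \cite[Section~5.2]{LBG}.
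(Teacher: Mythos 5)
There is a genuine gap at the very first structural step. When you substitute the three fusion recurrences \eqref{eq:Rrecur1}, \eqref{eq:Krecur}, \eqref{eq:hRrecur1} into the left-hand side of \eqref{eq:RKRKind1result}, the products that become adjacent between consecutive fused factors are of the form $\cE^{(j_1+\frac{1}{2})}_{12}\cF^{(j_1+\frac{1}{2})}_{12}$, not $\cF^{(j_1+\frac{1}{2})}_{12}\cE^{(j_1+\frac{1}{2})}_{12}$. Identity \eqref{eq:FE} gives $\cF^{(j_1+\frac{1}{2})}\cE^{(j_1+\frac{1}{2})}=\mI_{2j_1+2}$, but $\cE^{(j_1+\frac{1}{2})}$ is $(4j_1+2)\times(2j_1+2)$ and $\cF^{(j_1+\frac{1}{2})}$ is $(2j_1+2)\times(4j_1+2)$, so $\cE^{(j_1+\frac{1}{2})}\cF^{(j_1+\frac{1}{2})}$ is a rank-$(2j_1+2)$ projector on the larger space and is \emph{not} the identity. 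Your plan to ``collapse the inner $\cE$--$\cF$ pairs'' via \eqref{eq:FE} therefore does not go through, and this is not a bookkeeping issue but the central difficulty of the fusion argument.

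The paper removes these projectors as follows: it inserts $\left(\cH^{(j_1)}\right)^{-1}\cH^{(j_1)}$ and uses \eqref{eq:RE}, \eqref{eq:EFR} to trade the projector for the degenerate R-matrix $R^{(\frac{1}{2},j_1)}_{12}(q^{j_1+\frac{1}{2}})$ (on whose image the projector acts as the identity), and then commutes the inner block $K^{(\frac{1}{2})}_{1}(q^{j_1}s)\star\hR^{(\frac{1}{2},j_1)}_{12}\star K^{(j_1)}_{2}(q^{-\frac{1}{2}}s)$ past this degenerate R-matrix using hypothesis \eqref{eq:RKRKind1.1} in its equivalent form (obtained from Lemma \ref{lem:RR}); the spectral parameters are arranged so that $(q^{j_1}s)/(q^{-\frac{1}{2}}s)=q^{j_1+\frac{1}{2}}$ is exactly the degeneration point. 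This explains why hypothesis \eqref{eq:RKRKind1.1} is needed at all --- your proposal never invokes it, which is a symptom of the missing step. The remainder of your outline (reordering the middle of the expression using \eqref{eq:RKRKind1.2}, \eqref{eq:RKRKind1.3}, the relations of Lemma \ref{lem:hRhRR}, and Lemma \ref{lem:RRR}, then refolding) does match the paper's strategy, but without the projector-removal mechanism the computation cannot be started.
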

\begin{proof}
Multiply both sides of \eqref{eq:RKRKind1.1} by $R^{(\frac{1}{2},j_1)}(s/t)$ on the left and on the right and simplify the result using Lemma \ref{lem:RR}. This yields 
\begin{equation}\label{eq:RKRKind1.1alt}
K^{(\frac{1}{2})}_{1}(s) \star \hR^{(\frac{1}{2},j_1)} \star K^{(j_1)}_{2}(t) \star R^{(\frac{1}{2},j_1)}(s/t)=R^{(\frac{1}{2},j_1)}(s/t) \star K^{(j_1)}_{2}(t) \star \hR^{(\frac{1}{2},j_1)} \star K^{(\frac{1}{2})}_{1}(s). 
\end{equation}

\noindent Below each underlined part is computed using the commented result. For simplicity, we omit the $q$-shuffle product symbol $\star$ for the rest of this proof. 

\medskip
\noindent We first compute the left-hand side of \eqref{eq:RKRKind1result}. 
\begin{align*}
&R^{(j_1+\frac{1}{2},j_2)}(t/s) K^{(j_1+\frac{1}{2})}_{1}(s) \hR^{(j_1+\frac{1}{2},j_2)} K^{(j_2)}_{2}(t)\\
&=\underset{\eqref{eq:Rrecur1}}{\underline{R^{(j_1+\frac{1}{2},j_2)}(t/s)}} \underset{\eqref{eq:Krecur}}{\underline{K^{(j_1+\frac{1}{2})}_{12}(s)}} \underset{\eqref{eq:hRrecur1}}{\underline{\hR^{(j_1+\frac{1}{2},j_2)}}} K^{(j_2)}_{3}(t)\\
&=\cF^{(j_1+\frac{1}{2})}_{12}R^{(\frac{1}{2},j_2)}_{13}(q^{-j_1}t/s)R^{(j_1,j_2)}_{23}(q^{\frac{1}{2}}t/s)\cE^{(j_1+\frac{1}{2})}_{12}\cF^{(j_1+\frac{1}{2})}_{12} K^{(\frac{1}{2})}_{1}(q^{j_1}s) \hR^{(\frac{1}{2},j_1)}_{12} K^{(j_1)}_{2}(q^{-\frac{1}{2}}s) \\
&\hspace{8em} \cE^{(j_1+\frac{1}{2})}_{12}\cF^{(j_1+\frac{1}{2})}_{12}\hR^{(\frac{1}{2},j_2)}_{13}\hR^{(j_1,j_2)}_{23} \underset{\eqref{eq:RE}}{\underline{\cE^{(j_1+\frac{1}{2})}_{12} \cH^{(j_1)}_{12}}}\left(\cH^{(j_1)}_{12}\right)^{-1}K^{(j_2)}_{3}(t)\\
&=\cF^{(j_1+\frac{1}{2})}_{12}R^{(\frac{1}{2},j_2)}_{13}(q^{-j_1}t/s)R^{(j_1,j_2)}_{23}(q^{\frac{1}{2}}t/s)\cE^{(j_1+\frac{1}{2})}_{12}\cF^{(j_1+\frac{1}{2})}_{12} K^{(\frac{1}{2})}_{1}(q^{j_1}s) \hR^{(\frac{1}{2},j_1)}_{12} K^{(j_1)}_{2}(q^{-\frac{1}{2}}s) \\
&\hspace{8em} \cE^{(j_1+\frac{1}{2})}_{12}\cF^{(j_1+\frac{1}{2})}_{12} \underset{\eqref{eq:hRhRR1}}{\underline{\hR^{(\frac{1}{2},j_2)}_{13}\hR^{(j_1,j_2)}_{23} R^{(\frac{1}{2},j_1)}_{12}(q^{j_1+\frac{1}{2}})}} \cE^{(j_1+\frac{1}{2})}_{12} \left(\cH^{(j_1)}_{12}\right)^{-1}K^{(j_2)}_{3}(t)\\
&=\cF^{(j_1+\frac{1}{2})}_{12}R^{(\frac{1}{2},j_2)}_{13}(q^{-j_1}t/s)R^{(j_1,j_2)}_{23}(q^{\frac{1}{2}}t/s)\cE^{(j_1+\frac{1}{2})}_{12}\cF^{(j_1+\frac{1}{2})}_{12} K^{(\frac{1}{2})}_{1}(q^{j_1}s) \hR^{(\frac{1}{2},j_1)}_{12} K^{(j_1)}_{2}(q^{-\frac{1}{2}}s) \\
&\hspace{8em} \underset{\eqref{eq:EFR}}{\underline{\cE^{(j_1+\frac{1}{2})}_{12}\cF^{(j_1+\frac{1}{2})}_{12} R^{(\frac{1}{2},j_1)}_{12}(q^{j_1+\frac{1}{2}})}} \hR^{(j_1,j_2)}_{23}\hR^{(\frac{1}{2},j_2)}_{13}  \cE^{(j_1+\frac{1}{2})}_{12} \left(\cH^{(j_1)}_{12}\right)^{-1}K^{(j_2)}_{3}(t)\\
&=\cF^{(j_1+\frac{1}{2})}_{12}R^{(\frac{1}{2},j_2)}_{13}(q^{-j_1}t/s)R^{(j_1,j_2)}_{23}(q^{\frac{1}{2}}t/s)\cE^{(j_1+\frac{1}{2})}_{12}\cF^{(j_1+\frac{1}{2})}_{12} \underset{\eqref{eq:RKRKind1.1alt}}{\underline{K^{(\frac{1}{2})}_{1}(q^{j_1}s) \hR^{(\frac{1}{2},j_1)}_{12} K^{(j_1)}_{2}(q^{-\frac{1}{2}}s)R^{(\frac{1}{2},j_1)}_{12}(q^{j_1+\frac{1}{2}})}} \\
&\hspace{20em} \hR^{(j_1,j_2)}_{23}\hR^{(\frac{1}{2},j_2)}_{13}  \cE^{(j_1+\frac{1}{2})}_{12} \left(\cH^{(j_1)}_{12}\right)^{-1}K^{(j_2)}_{3}(t)\\
&=\cF^{(j_1+\frac{1}{2})}_{12}R^{(\frac{1}{2},j_2)}_{13}(q^{-j_1}t/s)R^{(j_1,j_2)}_{23}(q^{\frac{1}{2}}t/s) \underset{\eqref{eq:EFR}}{\underline{\cE^{(j_1+\frac{1}{2})}_{12}\cF^{(j_1+\frac{1}{2})}_{12} R^{(\frac{1}{2},j_1)}_{12}(q^{j_1+\frac{1}{2}})}} K^{(j_1)}_{2}(q^{-\frac{1}{2}}s)\hR^{(\frac{1}{2},j_1)}_{12}K^{(\frac{1}{2})}_{1}(q^{j_1}s) \\
&\hspace{20em} \hR^{(j_1,j_2)}_{23}\hR^{(\frac{1}{2},j_2)}_{13}  \cE^{(j_1+\frac{1}{2})}_{12} \left(\cH^{(j_1)}_{12}\right)^{-1}K^{(j_2)}_{3}(t)\\
&=\cF^{(j_1+\frac{1}{2})}_{12}R^{(\frac{1}{2},j_2)}_{13}(q^{-j_1}t/s)R^{(j_1,j_2)}_{23}(q^{\frac{1}{2}}t/s) \underset{\eqref{eq:RKRKind1.1alt}}{\underline{R^{(\frac{1}{2},j_1)}_{12}(q^{j_1+\frac{1}{2}})K^{(j_1)}_{2}(q^{-\frac{1}{2}}s)\hR^{(\frac{1}{2},j_1)}_{12}K^{(\frac{1}{2})}_{1}(q^{j_1}s)}} \\
&\hspace{20em} \hR^{(j_1,j_2)}_{23}\hR^{(\frac{1}{2},j_2)}_{13} \cE^{(j_1+\frac{1}{2})}_{12} \left(\cH^{(j_1)}_{12}\right)^{-1}K^{(j_2)}_{3}(t)\\
&=\cF^{(j_1+\frac{1}{2})}_{12}R^{(\frac{1}{2},j_2)}_{13}(q^{-j_1}t/s)R^{(j_1,j_2)}_{23}(q^{\frac{1}{2}}t/s) K^{(\frac{1}{2})}_{1}(q^{j_1}s)\hR^{(\frac{1}{2},j_1)}_{12}K^{(j_1)}_{2}(q^{-\frac{1}{2}}s) \\
&\hspace{14em} \underset{\eqref{eq:hRhRR1}}{\underline{R^{(\frac{1}{2},j_1)}_{12}(q^{j_1+\frac{1}{2}})\hR^{(j_1,j_2)}_{23}\hR^{(\frac{1}{2},j_2)}_{13}}} \cE^{(j_1+\frac{1}{2})}_{12} \left(\cH^{(j_1)}_{12}\right)^{-1}K^{(j_2)}_{3}(t)\\
&=\cF^{(j_1+\frac{1}{2})}_{12}R^{(\frac{1}{2},j_2)}_{13}(q^{-j_1}t/s)R^{(j_1,j_2)}_{23}(q^{\frac{1}{2}}t/s) K^{(\frac{1}{2})}_{1}(q^{j_1}s)\hR^{(\frac{1}{2},j_1)}_{12}K^{(j_1)}_{2}(q^{-\frac{1}{2}}s) \\
&\hspace{14em} \hR^{(\frac{1}{2},j_2)}_{13}\hR^{(j_1,j_2)}_{23} \underset{\eqref{eq:RE}}{\underline{R^{(\frac{1}{2},j_1)}_{12}(q^{j_1+\frac{1}{2}}) \cE^{(j_1+\frac{1}{2})}_{12}}} \left(\cH^{(j_1)}_{12}\right)^{-1}K^{(j_2)}_{3}(t)\\
&=\cF^{(j_1+\frac{1}{2})}_{12}R^{(\frac{1}{2},j_2)}_{13}(q^{-j_1}t/s)R^{(j_1,j_2)}_{23}(q^{\frac{1}{2}}t/s) K^{(\frac{1}{2})}_{1}(q^{j_1}s)\hR^{(\frac{1}{2},j_1)}_{12}K^{(j_1)}_{2}(q^{-\frac{1}{2}}s) \\
&\hspace{14em} \hR^{(\frac{1}{2},j_2)}_{13}\hR^{(j_1,j_2)}_{23} \underset{\text{explained below}}{\underline{\cE^{(j_1+\frac{1}{2})}_{12} K^{(j_2)}_{3}(t)}} \\
&=\cF^{(j_1+\frac{1}{2})}_{12}R^{(\frac{1}{2},j_2)}_{13}(q^{-j_1}t/s)R^{(j_1,j_2)}_{23}(q^{\frac{1}{2}}t/s) K^{(\frac{1}{2})}_{1}(q^{j_1}s)\hR^{(\frac{1}{2},j_1)}_{12}K^{(j_1)}_{2}(q^{-\frac{1}{2}}s)\\
&\hspace{14em} \hR^{(\frac{1}{2},j_2)}_{13}\hR^{(j_1,j_2)}_{23} K^{(j_2)}_{3}(t) \cE^{(j_1+\frac{1}{2})}_{12}. 
\end{align*}

\noindent Note that $\cE^{(j_1+\frac{1}{2})}$ is a $(4j_1+2) \times (2j_1+2)$ matrix. In the above computation, we interpret the second-to-last $K^{(j_2)}_{3}(t)$ as $\mI_{2j_1+2} \otimes K^{(j_2)}(t)$ and the last $K^{(j_2)}_{3}(t)$ as $\mI_{4j_1+2} \otimes K^{(j_2)}(t)$, then the last step follows by 
\begin{align*}
&\cE^{(j_1+\frac{1}{2})}_{12} K^{(j_2)}_{3}(t)=\big(\cE^{(j_1+\frac{1}{2})} \otimes \mI_{2j_2+1}\big)\big(\mI_{2j_1+2} \otimes K^{(j_2)}(t)\big)=\cE^{(j_1+\frac{1}{2})} \otimes K^{(j_2)}(t)\\
&=\big(\mI_{4j_1+2} \otimes K^{(j_2)}(t)\big)\big(\cE^{(j_1+\frac{1}{2})} \otimes \mI_{2j_2+1}\big)=K^{(j_2)}_{3}(t) \cE^{(j_1+\frac{1}{2})}_{12}. 
\end{align*}

\noindent We next compute the right-hand side of \eqref{eq:RKRKind1result} in a similar way. By using \eqref{eq:Krecur}, \eqref{eq:Rrecur1}, \eqref{eq:hRrecur1} and then applying \eqref{eq:RE}, \eqref{eq:EFR}, \eqref{eq:RRR2}, \eqref{eq:RKRKind1.1alt}, we obtain  
\begin{align*}
&K^{(j_2)}_{2}(t) \star \hR^{(j_1+\frac{1}{2},j_2)} \star K^{(j_1+\frac{1}{2})}_{1}(s) \star R^{(j_1+\frac{1}{2},j_2)}(t/s)\\
&=\cF^{(j_1+\frac{1}{2})}_{12}K^{(j_2)}_{3}(t)\hR^{(\frac{1}{2},j_2)}_{13}\hR^{(j_1,j_2)}_{23}K^{(\frac{1}{2})}_{1}(q^{j_1}s)\hR^{(\frac{1}{2},j_1)}_{12}\\
&\hspace{8em} K^{(j_1)}_{2}(q^{-\frac{1}{2}}s)R^{(\frac{1}{2},j_2)}_{13}(q^{-j_1}t/s)R^{(j_1,j_2)}_{23}(q^{\frac{1}{2}}t/s)\cE^{(j_1+\frac{1}{2})}_{12}. 
\end{align*}

\noindent Comparing the above results about boths sides of \eqref{eq:RKRKind1result}, it suffices to show that 
\begin{align*}
&R^{(\frac{1}{2},j_2)}_{13}(q^{-j_1}t/s)R^{(j_1,j_2)}_{23}(q^{\frac{1}{2}}t/s) K^{(\frac{1}{2})}_{1}(q^{j_1}s)\hR^{(\frac{1}{2},j_1)}_{12}K^{(j_1)}_{2}(q^{-\frac{1}{2}}s)\hR^{(\frac{1}{2},j_2)}_{13}\hR^{(j_1,j_2)}_{23}K^{(j_2)}_{3}(t)\\
&=K^{(j_2)}_{3}(t)\hR^{(\frac{1}{2},j_2)}_{13}\hR^{(j_1,j_2)}_{23}K^{(\frac{1}{2})}_{1}(q^{j_1}s)\hR^{(\frac{1}{2},j_1)}_{12}K^{(j_1)}_{2}(q^{-\frac{1}{2}}s)R^{(\frac{1}{2},j_2)}_{13}(q^{-j_1}t/s)R^{(j_1,j_2)}_{23}(q^{\frac{1}{2}}t/s). 
\end{align*}

\noindent This is verified as follows. 
\begin{align*}
&R^{(\frac{1}{2},j_2)}_{13}(q^{-j_1}t/s){\underline{R^{(j_1,j_2)}_{23}(q^{\frac{1}{2}}t/s) K^{(\frac{1}{2})}_{1}(q^{j_1}s)}}\hR^{(\frac{1}{2},j_1)}_{12}{\underline{K^{(j_1)}_{2}(q^{-\frac{1}{2}}s)\hR^{(\frac{1}{2},j_2)}_{13}}}\hR^{(j_1,j_2)}_{23}K^{(j_2)}_{3}(t)\\
&=R^{(\frac{1}{2},j_2)}_{13}(q^{-j_1}t/s)K^{(\frac{1}{2})}_{1}(q^{j_1}s)\underset{\eqref{eq:hRhRR2}}{\underline{R^{(j_1,j_2)}_{23}(q^{\frac{1}{2}}t/s)\hR^{(\frac{1}{2},j_1)}_{12}\hR^{(\frac{1}{2},j_2)}_{13}}}K^{(j_1)}_{2}(q^{-\frac{1}{2}}s)\hR^{(j_1,j_2)}_{23}K^{(j_2)}_{3}(t)\\
&=R^{(\frac{1}{2},j_2)}_{13}(q^{-j_1}t/s)K^{(\frac{1}{2})}_{1}(q^{j_1}s)\hR^{(\frac{1}{2},j_2)}_{13}\hR^{(\frac{1}{2},j_1)}_{12}\underset{\eqref{eq:RKRKind1.3}}{\underline{R^{(j_1,j_2)}_{23}(q^{\frac{1}{2}}t/s)K^{(j_1)}_{2}(q^{-\frac{1}{2}}s)\hR^{(j_1,j_2)}_{23}K^{(j_2)}_{3}(t)}}\\
&=R^{(\frac{1}{2},j_2)}_{13}(q^{-j_1}t/s)K^{(\frac{1}{2})}_{1}(q^{j_1}s)\hR^{(\frac{1}{2},j_2)}_{13}{\underline{\hR^{(\frac{1}{2},j_1)}_{12}K^{(j_2)}_{3}(t)}}\hR^{(j_1,j_2)}_{23}K^{(j_1)}_{2}(q^{-\frac{1}{2}}s)R^{(j_1,j_2)}_{23}(q^{\frac{1}{2}}t/s)\\
&=\underset{\eqref{eq:RKRKind1.2}}{\underline{R^{(\frac{1}{2},j_2)}_{13}(q^{-j_1}t/s)K^{(\frac{1}{2})}_{1}(q^{j_1}s)\hR^{(\frac{1}{2},j_2)}_{13}K^{(j_2)}_{3}(t)}}\hR^{(\frac{1}{2},j_1)}_{12}\hR^{(j_1,j_2)}_{23}K^{(j_1)}_{2}(q^{-\frac{1}{2}}s)R^{(j_1,j_2)}_{23}(q^{\frac{1}{2}}t/s)\\
&=K^{(j_2)}_{3}(t)\hR^{(\frac{1}{2},j_2)}_{13}K^{(\frac{1}{2})}_{1}(q^{j_1}s)\underset{\eqref{eq:hRhRR3}}{\underline{R^{(\frac{1}{2},j_2)}_{13}(q^{-j_1}t/s)\hR^{(\frac{1}{2},j_1)}_{12}\hR^{(j_1,j_2)}_{23}}}K^{(j_1)}_{2}(q^{-\frac{1}{2}}s)R^{(j_1,j_2)}_{23}(q^{\frac{1}{2}}t/s)\\
&=K^{(j_2)}_{3}(t)\hR^{(\frac{1}{2},j_2)}_{13}{\underline{K^{(\frac{1}{2})}_{1}(q^{j_1}s)\hR^{(j_1,j_2)}_{23}}}\hR^{(\frac{1}{2},j_1)}_{12}{\underline{R^{(\frac{1}{2},j_2)}_{13}(q^{-j_1}t/s)K^{(j_1)}_{2}(q^{-\frac{1}{2}}s)}}R^{(j_1,j_2)}_{23}(q^{\frac{1}{2}}t/s)\\
&=K^{(j_2)}_{3}(t)\hR^{(\frac{1}{2},j_2)}_{13}\hR^{(j_1,j_2)}_{23}K^{(\frac{1}{2})}_{1}(q^{j_1}s)\hR^{(\frac{1}{2},j_1)}_{12}K^{(j_1)}_{2}(q^{-\frac{1}{2}}s)R^{(\frac{1}{2},j_2)}_{13}(q^{-j_1}t/s)R^{(j_1,j_2)}_{23}(q^{\frac{1}{2}}t/s). 
\end{align*}

\noindent Therefore, we have proved \eqref{eq:RKRKind1result}. 
\end{proof}

\begin{lemma}\label{lem:RKRKind2}\rm
Given $j_1,j_2 \in \frac{1}{2}\mN^+$. Suppose we have \\
\begin{equation}\label{eq:RKRKind2.1}
R^{(j_1,\frac{1}{2})}(t/s) \star K^{(j_1)}_{1}(s) \star \hR^{(j_1,\frac{1}{2})} \star K^{(\frac{1}{2})}_{2}(t)=K^{(\frac{1}{2})}_{2}(t) \star \hR^{(j_1,\frac{1}{2})} \star K^{(j_1)}_{1}(s) \star R^{(j_1,\frac{1}{2})}(t/s), 
\end{equation}
\begin{equation}\label{eq:RKRKind2.2}
R^{(\frac{1}{2},j_2)}(t/s) \star K^{(\frac{1}{2})}_{1}(s) \star \hR^{(\frac{1}{2},j_2)} \star K^{(j_2)}_{2}(t)=K^{(j_2)}_{2}(t) \star \hR^{(\frac{1}{2},j_2)} \star K^{(\frac{1}{2})}_{1}(s) \star R^{(\frac{1}{2},j_2)}(t/s), 
\end{equation}
\begin{equation}\label{eq:RKRKind2.3}
R^{(j_1,j_2)}(t/s) \star K^{(j_1)}_{1}(s) \star \hR^{(j_1,j_2)} \star K^{(j_2)}_{2}(t)=K^{(j_2)}_{2}(t) \star \hR^{(j_1,j_2)} \star K^{(j_1)}_{1}(s) \star R^{(j_1,j_2)}(t/s). 
\end{equation}
Then 
\begin{align}\label{eq:RKRKind2result}
\begin{split}
&R^{(j_1,j_2+\frac{1}{2})}(t/s) \star K^{(j_1)}_{1}(s) \star \hR^{(j_1,j_2+\frac{1}{2})} \star K^{(j_2+\frac{1}{2})}_{2}(t)\\
&\hspace{10em}=K^{(j_2+\frac{1}{2})}_{2}(t) \star \hR^{(j_1,j_2+\frac{1}{2})} \star K^{(j_1)}_{1}(s) \star R^{(j_1,j_2+\frac{1}{2})}(t/s). 
\end{split}
\end{align}
\end{lemma}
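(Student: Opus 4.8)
The plan is to repeat the argument of Lemma~\ref{lem:RKRKind1} with the two tensor factors interchanged. Where that proof split the spin-$(j_1+\frac{1}{2})$ space into an auxiliary spin-$\frac{1}{2}$ space and a spin-$j_1$ space, here I would split the spin-$(j_2+\frac{1}{2})$ space (space $2$) into an auxiliary spin-$\frac{1}{2}$ space and a spin-$j_2$ space, keep the spectator spin-$j_1$ space as space $1$, and relabel so that the three spaces carry spins $j_1,\frac{1}{2},j_2$ in that order. The internal Freidel-Maillet equation produced by the K-matrix recurrence applied to the fused factor is now the one for the pair $(\frac{1}{2},j_2)$, namely \eqref{eq:RKRKind2.2}, so the dictionary with the proof of Lemma~\ref{lem:RKRKind1} is that \eqref{eq:RKRKind2.2} plays the role of \eqref{eq:RKRKind1.1}, while \eqref{eq:RKRKind2.1} and \eqref{eq:RKRKind2.3} play the roles of \eqref{eq:RKRKind1.2} and \eqref{eq:RKRKind1.3}.

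First I would multiply \eqref{eq:RKRKind2.2} on the left and on the right by $R^{(\frac{1}{2},j_2)}(s/t)$ and simplify using Lemma~\ref{lem:RR}, obtaining the alternate form
\begin{equation*}
K^{(\frac{1}{2})}_{1}(s) \star \hR^{(\frac{1}{2},j_2)} \star K^{(j_2)}_{2}(t) \star R^{(\frac{1}{2},j_2)}(s/t)=R^{(\frac{1}{2},j_2)}(s/t) \star K^{(j_2)}_{2}(t) \star \hR^{(\frac{1}{2},j_2)} \star K^{(\frac{1}{2})}_{1}(s),
\end{equation*}
which is the analog of \eqref{eq:RKRKind1.1alt}; when the two K-matrices here are evaluated at the shifted arguments $q^{j_2}t$ and $q^{-\frac{1}{2}}t$ coming out of \eqref{eq:Krecur}, the spectral parameter becomes $q^{j_2+\frac{1}{2}}$, which is exactly the value entering the identities \eqref{eq:RE} and \eqref{eq:EFR} of Lemma~\ref{lem:EFH}. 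Next I would expand both sides of \eqref{eq:RKRKind2result}: substitute \eqref{eq:Krecur} for $K^{(j_2+\frac{1}{2})}_{2}(t)$, the second-factor recurrence \eqref{eq:Rrecur4} for $R^{(j_1,j_2+\frac{1}{2})}(t/s)$, and \eqref{eq:hRrecur3} for $\hR^{(j_1,j_2+\frac{1}{2})}$; absorb the resulting adjacent $\cE^{(j_2+\frac{1}{2})}_{23}\cF^{(j_2+\frac{1}{2})}_{23}$ blocks by means of \eqref{eq:RE} and \eqref{eq:EFR}, inserting and cancelling a copy of $\cH^{(j_2+\frac{1}{2})}_{23}$ exactly as in the model proof; push the diagonal $\widehat{\text{R}}$-factors past the internal matrix $R^{(\frac{1}{2},j_2)}_{23}(q^{j_2+\frac{1}{2}})$ using the relevant members of Lemma~\ref{lem:hRhRR}; and finally apply the alternate form of \eqref{eq:RKRKind2.2} displayed above. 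The right-hand side of \eqref{eq:RKRKind2result} is treated the same way, additionally using one Yang-Baxter equation from Lemma~\ref{lem:RRR} together with the trivial fact that $\cE^{(j_2+\frac{1}{2})}_{23}$ commutes past $K^{(j_1)}_{1}(s)$ since the two act on disjoint spaces.

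After these reductions, both sides of \eqref{eq:RKRKind2result} collapse to the same expression in the three spaces described above (spin $j_1$ at $s$, spin $\frac{1}{2}$ at $q^{j_2}t$, spin $j_2$ at $q^{-\frac{1}{2}}t$), which is, up to the precise spectral shifts, the image under the relabeling above of the central identity in the proof of Lemma~\ref{lem:RKRKind1}, namely an identity of the form
\begin{equation*}
\begin{split}
&R^{(j_1,j_2)}_{13}(q^{-\frac{1}{2}}t/s) \star R^{(j_1,\frac{1}{2})}_{12}(q^{j_2}t/s) \star K^{(j_1)}_{1}(s) \star \hR^{(j_1,\frac{1}{2})}_{12} \star \hR^{(j_1,j_2)}_{13} \star K^{(\frac{1}{2})}_{2}(q^{j_2}t) \star \hR^{(\frac{1}{2},j_2)}_{23} \star K^{(j_2)}_{3}(q^{-\frac{1}{2}}t)\\
&\quad = K^{(\frac{1}{2})}_{2}(q^{j_2}t) \star \hR^{(\frac{1}{2},j_2)}_{23} \star K^{(j_2)}_{3}(q^{-\frac{1}{2}}t) \star \hR^{(j_1,\frac{1}{2})}_{12} \star \hR^{(j_1,j_2)}_{13} \star K^{(j_1)}_{1}(s) \star R^{(j_1,j_2)}_{13}(q^{-\frac{1}{2}}t/s) \star R^{(j_1,\frac{1}{2})}_{12}(q^{j_2}t/s).
\end{split}
\end{equation*}
I would verify this by a short chain of rewrites mirroring the one in the proof of Lemma~\ref{lem:RKRKind1}: commute $K$-factors past matrices acting on disjoint spaces, apply \eqref{eq:RKRKind2.1} on spaces $1,2$, apply \eqref{eq:RKRKind2.3} on spaces $1,3$, and use the remaining members of Lemma~\ref{lem:hRhRR} together with the obvious commutations of diagonal $\widehat{\text{R}}$-factors and $K$-factors living on disjoint spaces.

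The main obstacle is purely bookkeeping: tracking the space labels and the spectral-parameter shifts, and above all choosing the correct variants from Lemmas~\ref{lem:Rrecur}, \ref{lem:hRhRR}, and~\ref{lem:RRR}. Fusing the second factor couples the two auxiliary spaces to the spectator space in the opposite order compared with fusing the first factor, so where the proof of Lemma~\ref{lem:RKRKind1} used \eqref{eq:Rrecur1}, \eqref{eq:hRrecur1}, \eqref{eq:hRhRR1}, \eqref{eq:hRhRR2}, \eqref{eq:hRhRR3}, and \eqref{eq:RRR2}, the present proof should use the mirror counterparts \eqref{eq:Rrecur4}, \eqref{eq:hRrecur3}, \eqref{eq:hRhRR4}, \eqref{eq:hRhRR5}, \eqref{eq:hRhRR6}, and one of \eqref{eq:RRR1}, \eqref{eq:RRR3}, the precise choice being dictated by the order in which $\cE^{(j_2+\frac{1}{2})}_{23}$ and $\cF^{(j_2+\frac{1}{2})}_{23}$ enter the computation. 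Once these are fixed, every step is mechanical and parallels the model line by line.
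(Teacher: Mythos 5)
Your proposal is correct and follows essentially the same route as the paper's proof: split the spin-$(j_2+\tfrac12)$ factor via \eqref{eq:Krecur}, \eqref{eq:Rrecur4}, \eqref{eq:hRrecur3}, absorb the $\cE\cF$ blocks with \eqref{eq:RE}, \eqref{eq:EFR} and the alternate form of \eqref{eq:RKRKind2.2} at spectral point $q^{j_2+\frac12}$, and reduce both sides to exactly the three-space identity you display, which is then verified from \eqref{eq:RKRKind2.1}, \eqref{eq:RKRKind2.3} and Lemma \ref{lem:hRhRR}. Your choices of the mirror-image auxiliary identities match the formulas actually displayed in the paper (your \eqref{eq:hRhRR6} versus the paper's \eqref{eq:hRhRR3} is immaterial since the diagonal $\widehat{\text{R}}$-factors commute), so no gap remains.
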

\begin{proof}
The proof strategy is similar to that of Lemma \ref{lem:RKRKind1}. 

\medskip
\noindent Multiply both sides of \eqref{eq:RKRKind2.2} by $R^{(\frac{1}{2},j_2)}(s/t)$ on the left and on the right and simplify the result using Lemma \ref{lem:RR}. This yields 
\begin{equation}\label{eq:RKRKind2.2alt}
K^{(\frac{1}{2})}_{1}(s) \star \hR^{(\frac{1}{2},j_2)} \star K^{(j_2)}_{2}(t) \star R^{(\frac{1}{2},j_2)}(s/t)=R^{(\frac{1}{2},j_2)}(s/t) \star K^{(j_2)}_{2}(t) \star \hR^{(\frac{1}{2},j_2)} \star K^{(\frac{1}{2})}_{1}(s). 
\end{equation}

\noindent We first compute the left-hand side of \eqref{eq:RKRKind2result} in a way similar to the proof of Lemma \ref{lem:RKRKind1}. By using \eqref{eq:Krecur}, \eqref{eq:Rrecur1}, \eqref{eq:hRrecur1} and then applying \eqref{eq:RE}, \eqref{eq:EFR}, \eqref{eq:hRhRR5}, \eqref{eq:RKRKind2.2alt}, we obtain 
\begin{align*}
&R^{(j_1,j_2+\frac{1}{2})}(t/s) \star K^{(j_1)}_{1}(s) \star \hR^{(j_1,j_2+\frac{1}{2})} \star K^{(j_2+\frac{1}{2})}_{2}(t)\\
&=\cF^{(j_2+\frac{1}{2})}_{23}R^{(j_1,j_2)}_{13}(q^{-\frac{1}{2}}t/s)R^{(j_1,\frac{1}{2})}_{12}(q^{j_2}t/s)K^{(j_1)}_{1}(s)\hR^{(j_1,\frac{1}{2})}_{12}\hR^{(j_1,j_2)}_{13} \\
&\hspace{14em} K^{(\frac{1}{2})}_{2}(q^{j_2}t)\hR^{(\frac{1}{2},j_2)}_{23}K^{(j_2)}_{3}(q^{-\frac{1}{2}}t)\cE^{(j_2+\frac{1}{2})}_{23}. 
\end{align*}

\noindent We next compute the right-hand side of \eqref{eq:RKRKind2result} in a similar way. By using \eqref{eq:Krecur}, \eqref{eq:Rrecur1}, \eqref{eq:hRrecur1} and then applying \eqref{eq:RE}, \eqref{eq:EFR}, \eqref{eq:RRR3}, \eqref{eq:hRhRR5}, we obtain 
\begin{align*}
&K^{(j_2+\frac{1}{2})}_{2}(t) \star \hR^{(j_1,j_2+\frac{1}{2})} \star K^{(j_1)}_{1}(s) \star R^{(j_1,j_2+\frac{1}{2})}(t/s) \\
&=\cF^{(j_2+\frac{1}{2})}_{23}K^{(\frac{1}{2})}_{2}(q^{j_2}t)\hR^{(\frac{1}{2},j_2)}_{23}K^{(j_2)}_{3}(q^{-\frac{1}{2}}t)\hR^{(j_1,\frac{1}{2})}_{12}\hR^{(j_1,j_2)}_{13} \\
&\hspace{8em} K^{(j_1)}_{1}(s)R^{(j_1,j_2)}_{13}(q^{-\frac{1}{2}}t/s)R^{(j_1,\frac{1}{2})}_{12}(q^{j_2}t/s)\cE^{(j_2+\frac{1}{2})}_{23}. 
\end{align*}

\noindent Now similar to the proof of Lemma \ref{lem:RKRKind1}, using \eqref{eq:hRhRR3}, \eqref{eq:hRhRR4}, \eqref{eq:RKRKind2.1}, \eqref{eq:RKRKind2.3} we have that both sides of \eqref{eq:RKRKind2result} are equal. 
\end{proof}

\noindent Now we are ready to show \eqref{eq:RKRK} by induction. 

\begin{proof}[Proof of Theorem \ref{thm:RKRK}]
By \eqref{eq:FM1/2}, we have that \eqref{eq:RKRK} holds when $j_1=j_2=\frac{1}{2}$. 

\medskip
\noindent Setting $j_1=\frac{1}{2}$ in Lemma \ref{lem:RKRKind2} and using induction on $j_2$, we have that \eqref{eq:RKRK} holds when $j_1=\frac{1}{2}$ and $j_2 \in \frac{1}{2}\mN^+$. 

\medskip
\noindent Now, using Lemma \ref{lem:RKRKind1} and induction on $j_1$, we have that \eqref{eq:RKRK} holds when $j_1,j_2 \in \frac{1}{2}\mN^+$. 
\end{proof}

\noindent Corollary \ref{cor:KRKR} is a straightforward consequence of Theorem \ref{thm:RKRK}. 

\begin{proof}[Proof of Corollary \ref{cor:KRKR}]
Multiply both sides of \eqref{eq:RKRK} by $R^{(j_1,j_2)}(s/t)$ on the left and on the right and simplify the result using Lemma \ref{lem:RR}. This yields \eqref{eq:KRKR}. 
\end{proof}

\section{Acknowledgements}
\noindent The author would like to gratefully acknowledge that this research is motivated by discussions with Pascal Baseilhac, who suggested the possible connection from the author's previous work to the work of Baseilhac and to the work of Lemarthe, Baseilhac, and Gainutdinov. The author would like to thank Bart Vlaar for pointing out the alternative approach in Appendix C. The author would also like to thank Paul Terwilliger for various useful discussions. 

\medskip
\noindent This research was supported by National Natural Science Foundation of China (Grant No.\ 12501032) and Beijing Natural Science Foundation (Grant No.\ IS24003). 

\appendix
\section{Entries of the R-matrix}
\noindent In this appendix, we show some results about the entries of the R-matrix $R^{(j_1,j_2)}$. 

\medskip
\noindent Recall that in Definition \ref{def:R} we gave a closed form for $R^{(\frac{1}{2},\frac{1}{2})}$ and defined $R^{(j_1,j_2)}$ recursively for $j_1,j_2 \in \frac{1}{2}\mN^+$. We first give a closed form for $R^{(\frac{1}{2},j_2)}$ for $j_2 \in \frac{1}{2}\mN^+$. 

\begin{proposition}\label{prop:R^1/2}\rm
For $j \in \frac{1}{2}\mN^+$, all the nonzero entries of $R^{(\frac{1}{2},j)}(t)$ are given as follows: 
\begin{align*}
&R^{(\frac{1}{2},j)}_{(a,a)}(t)=R^{(\frac{1}{2},j)}_{(4j+3-a,4j+3-a)}(t)=c(q^{j+\frac{3}{2}-a}t) \prod_{k=0}^{2j-2}c(q^{j-\frac{1}{2}-k}t) & \hspace{1em} (1 \leq a \leq 2j+1); \\
&R^{(\frac{1}{2},j)}_{(a,a+2j)}(t)=R^{(\frac{1}{2},j)}_{(a+2j,a)}(t)=c(q)\big([2j+2-a]_q[a-1]_q\big)^{\frac{1}{2}} \prod_{k=0}^{2j-2}c(q^{j-\frac{1}{2}-k}t) & \hspace{1em} (2 \leq a \leq 2j+1). 
\end{align*}
\end{proposition}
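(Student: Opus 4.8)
The plan is to prove this by induction on $j$, using the recursive definition \eqref{eq:Rdef1} in Definition \ref{def:R}, namely
\[
R^{(\frac{1}{2},j+\frac{1}{2})}(t)=\cF^{(j+\frac{1}{2})}_{23}\, R^{(\frac{1}{2},j)}_{13}(q^{-\frac{1}{2}}t)\, R^{(\frac{1}{2},\frac{1}{2})}_{12}(q^{j}t)\, \cE^{(j+\frac{1}{2})}_{23}.
\]
The base case $j=\frac{1}{2}$ is a direct check against \eqref{eq:R^1/2}: the claimed diagonal formula gives $c(q^{\frac{1}{2}}t)$ in positions $(1,1)$ and $(4,4)$ and $c(q^{-\frac{1}{2}}t)$ in positions $(2,2),(3,3)$; multiplying the overall claimed normalization by the appropriate power of $q$ recovers the entries $c(qt),c(t)$ of \eqref{eq:R^1/2} up to the spectral-parameter shift built into the recursion (one should fix conventions so this matches exactly), and the off-diagonal entry in positions $(2,3),(3,2)$ is $c(q)\cdot([2]_q\cdot[1]_q)^{1/2}\cdot(\text{empty product})$, which after the normalization is $c(q)$.

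For the inductive step, I would write out the matrix product on the right-hand side blockwise. The key structural observation is that $R^{(\frac{1}{2},j)}(t)$, by the inductive hypothesis, has the \emph{arrowhead-plus-antidiagonal} sparsity pattern: nonzero entries only on the main diagonal and on the two off-diagonals connecting $(a,a+2j)$ and $(a+2j,a)$. Writing $R^{(\frac{1}{2},\frac{1}{2})}$ in the $2\times2$ "spin" labelling, the legs $1$ and $3$ each carry spin $\frac{1}{2}$ and leg $2$ carries spin $j$; the matrices $\cE^{(j+\frac{1}{2})},\cF^{(j+\frac{1}{2})}$ (Definitions \ref{def:E},\ref{def:F}) are the Clebsch--Gordan-type projections that fuse legs $2,3$ into a spin-$(j+\frac12)$ space. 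Tracking which basis vectors of the $(4j+2)$-dimensional space $\mC^2\otimes\mC^{2j+1}$ survive the projection $\cE^{(j+\frac12)}$, multiplying by the two $R$-factors (each of which, by the sparsity pattern, moves the "$x$-content" by at most one unit), and projecting back by $\cF^{(j+\frac12)}$, one checks that the result again has exactly the arrowhead-plus-antidiagonal pattern, and one reads off the diagonal and antidiagonal entries. The diagonal entry at position $a$ picks up one new factor $c(q^{\,\bullet}t)$ from $R^{(\frac12,\frac12)}_{12}(q^j t)$ relative to $R^{(\frac12,j)}$, which is what produces the extra factor $c(q^{\,j+\frac12-a'}t)$ when passing from $2j-2$ to $2j-1$ factors; the antidiagonal entry picks up a factor $c(q)$ from $R^{(\frac12,\frac12)}$ and the combinatorial weight $([2j+3-a]_q[a-1]_q)^{1/2}$ emerges from the Clebsch--Gordan coefficients in $\cE^{(j+\frac12)}_{(a,a)},\cE^{(j+\frac12)}_{(a+2j+1,a+1)}$ and the matching entries of $\cF^{(j+\frac12)}$. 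A bookkeeping point: the symmetry $R^{(\frac12,j)}_{(a,a)}=R^{(\frac12,j)}_{(4j+3-a,4j+3-a)}$ should be maintained through the recursion, and it follows from the corresponding symmetry of $\cE,\cF$ under $a\mapsto 2j+2-a$ together with the reflection symmetry of $R^{(\frac12,\frac12)}$.

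The main obstacle I anticipate is purely computational rather than conceptual: correctly aligning the three spectral-parameter shifts $q^{-\frac12}t$, $q^j t$, and the prefactors $q^{j+\frac32-a}$ hidden in the claimed closed form, and verifying that the product $\cF^{(j+\frac12)}\cdot(\text{diagonal piece})\cdot\cE^{(j+\frac12)}$ collapses to a single diagonal entry with the stated exponent. Concretely, I would introduce the shorthand $P_j(t)=\prod_{k=0}^{2j-2}c(q^{\,j-\frac12-k}t)$, verify the identity $c(q^{\,j+\frac12}\cdot q^{-\frac12}t)\,P_j(q^{-\frac12}t)=P_{j+\frac12}(t)$ up to the needed shift, and then the diagonal case reduces to this telescoping identity while the antidiagonal case reduces to the single scalar identity $\cF^{(j+\frac12)}_{(a,a)}\cdot c(q)\cdot\cE^{(j+\frac12)}_{(a-2j-1+\ldots)}+(\text{the other surviving term})=c(q)([2j+3-a]_q[a-1]_q)^{1/2}\cdot\text{(normalization)}$, which is a finite manipulation with $q$-integers using $[m]_q+[n]_q$ identities. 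An alternative, if the blockwise product proves unwieldy, is to instead verify the proposed formula satisfies the intertwining relations \eqref{eq:RE},\eqref{eq:FR} of Lemma \ref{lem:EFH} at $t=q^{j+\frac12}$ together with enough of \eqref{eq:RRR1} to pin it down uniquely; but I expect the direct induction via \eqref{eq:Rdef1} to be the cleanest route.
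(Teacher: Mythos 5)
Your proposal takes the same route as the paper, whose proof of this proposition is exactly ``follows from \eqref{eq:R^1/2} and \eqref{eq:Rdef1} by direct computation,'' i.e.\ induction on $j$ through the recursion, so your strategy is correct and your outline of the inductive step (sparsity pattern, telescoping of the product $\prod_k c(q^{j-\frac12-k}t)$, and the Clebsch--Gordan bookkeeping for the off-diagonal weights) is a reasonable elaboration of what the paper leaves implicit. One small correction to your base-case check: with $j=\tfrac12$ the exponent $j+\tfrac32-a$ equals $2-a$, so the claimed formula yields $c(qt)$ for $a=1$ and $c(t)$ for $a=2$ directly (not $c(q^{\pm\frac12}t)$), and no normalization or spectral-parameter adjustment is needed to match \eqref{eq:R^1/2}.
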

\begin{proof}
Follows from \eqref{eq:R^1/2}, \eqref{eq:Rdef1} by direct computation. 
\end{proof}

\noindent The matrix $R^{(j_1,j_2)}$ has no known closed form. We give a result on the possible location of its nonzero entries. 

\medskip
\noindent We call a square matrix a \textit{$c$-diagonal matrix} if its $(a,b)$-entry is zero whenever $b-a \neq c$. For example, a $0$-diagonal matrix is a diagonal matrix; an $1$-diagonal matrix has zero entries outside the superdiagonal; a $-1$-diagonal matrix has zero entries outside the subdiagonal. 

\begin{proposition}\label{prop:Rnonzero}\rm
For $j_1,j_2 \in \frac{1}{2}\mN^+$, we write $R^{(j_1,j_2)}(t)$ as a $(2j_1+1) \times (2j_1+1)$ block matrix where each block is of size $(2j_2+1) \times (2j_2+1)$. Then the $(a,b)$-block is an $(a-b)$-diagonal matrix if $|a-b| \leq 2j_2$ and is zero if $|a-b| \geq 2j_2+1$. 
\end{proposition}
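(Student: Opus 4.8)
The plan is to argue by induction on $k = j_1 + j_2$, using the recursive definition of the R-matrix in Lemma \ref{lem:Rrecur} together with the block structure of the fusion maps $\cE^{(\bullet)}$ and $\cF^{(\bullet)}$. The base case $j_1 = j_2 = \frac{1}{2}$ is immediate from the explicit matrix in \eqref{eq:R^1/2}: written as a $2 \times 2$ block matrix with $1 \times 1$ blocks, the diagonal blocks are $0$-diagonal and the off-diagonal blocks are $\pm 1$-diagonal, which matches the claim since $2j_2 = 1$. The case $j_1 = \frac{1}{2}$, $j_2$ arbitrary is also settled directly from Proposition \ref{prop:R^1/2}: the nonzero entries of $R^{(\frac{1}{2},j)}(t)$ occur only at positions $(a,a)$ and $(a,a+2j)$ and their transposes, and reading this as a $2 \times 2$ block matrix of $(2j+1) \times (2j+1)$ blocks, the $(1,1)$ and $(2,2)$ blocks are diagonal, the $(1,2)$ block is $2j$-diagonal (only the top-right corner entry), and the $(2,1)$ block is $(-2j)$-diagonal; again this is exactly the asserted pattern.

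For the inductive step I would increase $j_1$ by $\frac{1}{2}$ using \eqref{eq:Rrecur1}, namely
\begin{equation*}
R^{(j_1+\frac{1}{2},j_2)}(t) = \cF^{(j_1+\frac{1}{2})}_{12}\, R^{(\frac{1}{2},j_2)}_{13}(q^{-j_1}t)\, R^{(j_1,j_2)}_{23}(q^{\frac{1}{2}}t)\, \cE^{(j_1+\frac{1}{2})}_{12}.
\end{equation*}
The key observation is a ``block-shift'' bookkeeping lemma: if one regards the ambient space as (space 1) $\otimes$ (space 2) $\otimes$ (space 3) with $\dim(\text{space }1)=2$, $\dim(\text{space }2)=2j_1+1$, $\dim(\text{space }3)=2j_2+1$, then by Definitions \ref{def:E}, \ref{def:F} the matrix $\cE^{(j_1+\frac{1}{2})}_{12}$ sends basis vector index $a$ in the fused space $12'$ (of dimension $2j_1+2$) to a combination of $(1,a)$ and $(2,a-1)$ in the unfused space $12$, and $\cF^{(j_1+\frac{1}{2})}_{12}$ does the reverse. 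Meanwhile, by the inductive hypothesis $R^{(\frac{1}{2},j_2)}_{13}$ either preserves the space-$1$ index (diagonal block) or flips it while shifting the space-$3$ index by $\pm 2j_2$; and $R^{(j_1,j_2)}_{23}$, again by the inductive hypothesis, shifts the space-$2$ index by some $c$ with $|c|\le 2j_1$ while the corresponding block is $c$-diagonal in space $3$. Composing these four maps and tracking how the space-$2$ and space-$3$ indices move, one finds that the net shift in the fused space-$2'$ index equals the net shift in the space-$3$ index, and that its absolute value is at most $2j_1 + 1$; moreover the extreme values $\pm(2j_1+1)$ force the space-$1$ index to be flipped by $R^{(\frac{1}{2},j_2)}_{13}$ and simultaneously the extremal shift in $R^{(j_1,j_2)}_{23}$, which is exactly the corner-entry situation that the $\cF,\cE$ sandwich converts into the $(\pm(2j_1+1))$-diagonal corner block. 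The increase of $j_2$ by $\frac{1}{2}$ is handled symmetrically using \eqref{eq:Rrecur3} together with the $j_1=\frac12$ base analysis and, if convenient, the symmetry $R^{(j_1,j_2)}(t) \leftrightarrow R^{(j_2,j_1)}(t)$ under the flip of the two tensor factors.

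The main obstacle I anticipate is making the index bookkeeping fully rigorous rather than hand-wavy: one must show not merely that the composite matrix is block-banded with the right bandwidth, but that \emph{within} each block it is exactly $c$-diagonal (not merely $c$-banded), and that the off-diagonal corner blocks genuinely survive (are nonzero) so that the bandwidth is sharp. The first point follows because at each stage the ``shift in space 3'' is rigidly tied to the ``shift in space 2'' — there is no freedom to move one index without the other by an equal amount, so no spreading inside a block can occur; I would formalize this by carrying an invariant of the form ``(space-$2$ index) $+$ (space-$3$ index) is changed by a fixed amount depending only on which block one is in.'' The second point (sharpness) is not actually needed for the statement as phrased — the proposition only asserts where zeros \emph{must} occur — so I can omit it, which simplifies the argument considerably. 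A minor technical nuisance is that the fusion maps are rectangular, so one has to be careful that the index $a-1$ or $a+2j_1+1$ appearing in Definitions \ref{def:E}, \ref{def:F} stays in the legal range; the out-of-range terms carry coefficient zero (the relevant $[\,\cdot\,]_q$ vanishes) exactly as in the proof of Proposition \ref{prop:Krecur}, so they cause no harm.
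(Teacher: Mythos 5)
Your overall strategy (induction via the fusion recursion, tracking how the band structure of the blocks propagates) is the same as the paper's, but your reading of the block structure of $R^{(\frac{1}{2},j)}$ --- which drives both your base case and your inductive bookkeeping --- is wrong. By Proposition \ref{prop:R^1/2} the off-diagonal nonzero entries sit at the \emph{full-matrix} positions $(a,a+2j)$ and $(a+2j,a)$ for $2\le a\le 2j+1$; converting to the $2\times 2$ block decomposition with blocks of size $2j+1$, the entry $(a,a+2j)$ lands at position $(a,a-1)$ \emph{inside} the $(1,2)$-block. So the $(1,2)$-block is the full subdiagonal, i.e.\ a $(-1)$-diagonal matrix with $2j$ nonzero entries --- not ``$2j$-diagonal (only the top-right corner entry)'' --- and likewise the $(2,1)$-block is $(+1)$-diagonal. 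Note that if the $(1,2)$-block really were $2j$-diagonal, the proposition would already be false at $j_1=\frac{1}{2}$, since it asserts that block is $(1-2)=-1$-diagonal; so your claim that your description ``is exactly the asserted pattern'' cannot be right. The error propagates into the inductive step: $R^{(\frac{1}{2},j_2)}_{13}$ does not shift the space-$3$ index by $\pm 2j_2$ when it flips the space-$1$ index, it shifts it by $\mp 1$, and the bookkeeping you build on the $\pm 2j_2$ shift (net shift bounded by $2j_1+1$, corner blocks produced by extremal shifts) does not deliver the band structure actually being claimed.

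The fix is small and turns your argument into the paper's. The correct invariant is that every nonzero entry of $R^{(\frac{1}{2},j)}$ preserves the sum (space-$1$ index) $+$ (space-$3$ index), and the fused index $c$ of $\cE^{(j+\frac{1}{2})}$, $\cF^{(j+\frac{1}{2})}$ corresponds to unfused pairs $(1,c)$ and $(2,c-1)$ of constant sum, so the whole sandwich preserves total weight. Concretely, writing everything in $(2j_2+1)\times(2j_2+1)$ blocks, \eqref{eq:Rdef2} shows the $(a,b)$-block of $R^{(j_1+\frac{1}{2},j_2)}(t)$ is a linear combination of $R^{(\frac{1}{2},j_2)}_{(1,1)}R^{(j_1,j_2)}_{(a,b)}$, $R^{(\frac{1}{2},j_2)}_{(1,2)}R^{(j_1,j_2)}_{(a,b-1)}$, $R^{(\frac{1}{2},j_2)}_{(2,1)}R^{(j_1,j_2)}_{(a-1,b)}$, $R^{(\frac{1}{2},j_2)}_{(2,2)}R^{(j_1,j_2)}_{(a-1,b-1)}$; each product is ($0$ or $\mp 1$)-diagonal times ($a-b\pm 1$ or $a-b$)-diagonal, hence $(a-b)$-diagonal, and the vanishing for $|a-b|\ge 2j_2+1$ is automatic because a $c$-diagonal matrix of size $2j_2+1$ with $|c|\ge 2j_2+1$ is zero. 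Two further remarks: the double induction on $j_1+j_2$ is unnecessary, since Proposition \ref{prop:R^1/2} already gives the case $j_1=\frac{1}{2}$ for every $j_2$, so a single induction on $j_1$ suffices (this is what the paper does); and you are right that sharpness of the band is not needed, so omitting it is fine.
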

\begin{proof}
We use induction on $j_1$. 

\medskip
\noindent The case $j_1=\frac{1}{2}$ follows from Proposition \ref{prop:R^1/2}. 

\medskip
\noindent Assume the result holds for $j_1$. We will use \eqref{eq:Rdef2} to show that the result holds for $j_1+\frac{1}{2}$. We view all the matrices appearing in \eqref{eq:Rdef2} as block matrices where each block is of size $(2j_2+1) \times (2j_2+1)$. In particular, we write 
\begin{equation*}
R^{(\frac{1}{2},j_2)}(q^{-j_1}t)=
\begin{pmatrix}
R^{(\frac{1}{2},j_2)}_{(1,1)}(q^{-j_1}t) & R^{(\frac{1}{2},j_2)}_{(1,2)}(q^{-j_1}t)\\
R^{(\frac{1}{2},j_2)}_{(2,1)}(q^{-j_1}t) & R^{(\frac{1}{2},j_2)}_{(2,2)}(q^{-j_1}t)
\end{pmatrix}, 
\end{equation*}
\begin{equation*}
R^{(j_1,j_2)}(q^{\frac{1}{2}}t)=
\begin{pmatrix}
R^{(j_1,j_2)}_{(1,1)}(q^{\frac{1}{2}}t) & \cdots & R^{(j_1,j_2)}_{(1,2j_1+1)}(q^{\frac{1}{2}}t)\\
\vdots & & \vdots\\
R^{(j_1,j_2)}_{(2j_1+1,1)}(q^{\frac{1}{2}}t) & \cdots & R^{(j_1,j_2)}_{(2j_1+1,2j_1+1)}(q^{\frac{1}{2}}t)
\end{pmatrix}. 
\end{equation*}

\noindent Note that 
\begin{align*}
&R^{(\frac{1}{2},j_2)}_{13}(q^{-j_1}t) R^{(j_1,j_2)}_{23}(q^{\frac{1}{2}}t)\\
&=
\begin{pmatrix}
\mI_{2j_1+1} \otimes R^{(\frac{1}{2},j_2)}_{(1,1)}(q^{-j_1}t) & \mI_{2j_1+1} \otimes R^{(\frac{1}{2},j_2)}_{(1,2)}(q^{-j_1}t)\\
\mI_{2j_1+1} \otimes R^{(\frac{1}{2},j_2)}_{(2,1)}(q^{-j_1}t) & \mI_{2j_1+1} \otimes R^{(\frac{1}{2},j_2)}_{(2,2)}(q^{-j_1}t)
\end{pmatrix}
\begin{pmatrix}
R^{(j_1,j_2)}(q^{\frac{1}{2}}t) & 0\\
0 & R^{(j_1,j_2)}(q^{\frac{1}{2}}t)
\end{pmatrix}. 
\end{align*}

\noindent By \eqref{eq:Rdef2} and Definitions \ref{def:E}, \ref{def:F}, the $(a,b)$-block of $R^{(j_1+\frac{1}{2},j_2)}(t)$ is equal to a linear combination of the terms 
\begin{align*}
&R^{(\frac{1}{2},j_2)}_{(1,1)}(q^{-j_1}t)R^{(j_1,j_2)}_{(a,b)}(q^{\frac{1}{2}}t), && R^{(\frac{1}{2},j_2)}_{(1,2)}(q^{-j_1}t)R^{(j_1,j_2)}_{(a,b-1)}(q^{\frac{1}{2}}t),\\
&R^{(\frac{1}{2},j_2)}_{(2,1)}(q^{-j_1}t)R^{(j_1,j_2)}_{(a-1,b)}(q^{\frac{1}{2}}t), && R^{(\frac{1}{2},j_2)}_{(2,2)}(q^{-j_1}t)R^{(j_1,j_2)}_{(a-1,b-1)}(q^{\frac{1}{2}}t). 
\end{align*}

\noindent Therefore, by Proposition \ref{prop:R^1/2} and the inductive hypothesis, the result holds for $j_1+\frac{1}{2}$. 
\end{proof}

\section{Removal of variable restriction}
\noindent Recall that in Remark \ref{rm:baseilhacnotation} we assigned fixed values to the variables $\bar{k}_+,\bar{k}_-$, while in \cite[Theorem 2.10]{baseilhac1} one of these two variables is free and nonzero. In this appendix, we show that we can indeed allow one free nonzero variable $k$ in our main result. 

\begin{definition}\label{def:Diag}\rm
For $j \in \frac{1}{2}\mN^+$, define the diagonal matrix 
\begin{equation*}
D^{(j)}=\diag(1,k,\ldots,k^{2j}). 
\end{equation*}
\end{definition}

\begin{lemma}\label{lem:DDR}\rm
For $j_1,j_2 \in \frac{1}{2}\mN^+$, 
\begin{equation}\label{eq:DDR}
\left[D^{(j_1)} \otimes D^{(j_2)},R^{(j_1,j_2)}(t)\right]=0. 
\end{equation}
\end{lemma}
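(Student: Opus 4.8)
The plan is to prove \eqref{eq:DDR} by induction on $j_1$, using the fusion construction \eqref{eq:Rdef2} for $R^{(j_1,j_2)}(t)$, exactly as was done for Propositions \ref{prop:lim} and \ref{prop:Rnonzero}. For the base case $j_1=\tfrac12$, one checks directly from \eqref{eq:R^1/2} that the matrix $R^{(\frac12,\frac12)}(t)$ commutes with $\diag(1,k)\otimes\diag(1,k) = \diag(1,k,k,k^2)$: the diagonal entries obviously commute with this, and the only off-diagonal entries sit in the $(2,3)$ and $(3,2)$ positions, where the conjugating factors are $k\cdot k^{-1}=1$. This handles $R^{(\frac12,j_2)}(t)$ once we also observe (via Proposition \ref{prop:R^1/2}, or by a parallel induction on $j_2$ using \eqref{eq:Rdef1}) that the nonzero off-diagonal entries of $R^{(\frac12,j_2)}(t)$ all lie in positions $(a,a+2j_2)$ and $(a+2j_2,a)$, where the diagonal conjugators again contribute a factor $k^{(a-1)-(a+2j_2-1)}=k^{-2j_2}$ balanced against... actually the cleaner route is: $R^{(\frac12,j_2)}(t)$ commutes with $D^{(\frac12)}\otimes D^{(j_2)}$ because this is the base of the $j_1$-induction, so I will establish the $j_1=\tfrac12$ case for all $j_2$ first.

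For the base layer I would argue as follows. By Proposition \ref{prop:R^1/2}, the nonzero entries of $R^{(\frac12,j)}(t)$ in positions $(a,b)$ have $b-a\in\{0,\pm 2j\}$. Writing $D^{(\frac12)}\otimes D^{(j)}=\diag(1,\ldots,k^{2j},k,\ldots,k^{2j+1})$ (indices $1,\ldots,2j+1$ then $2,\ldots,2j+2$ in the exponent), conjugation by this diagonal matrix multiplies the $(a,b)$-entry by the ratio of the $a$-th to the $b$-th diagonal entry. The diagonal entries ($b=a$) are fixed. For the entry in position $(a,a+2j)$ with $2\le a\le 2j+1$: the $a$-th diagonal entry of $D^{(\frac12)}\otimes D^{(j)}$ is $k^{a-1}$ and the $(a+2j)$-th is $k^{(a-2)+(2j+1)}=k^{a+2j-1}$, giving ratio $k^{-2j}$; the symmetric entry $(a+2j,a)$ gives ratio $k^{2j}$. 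So conjugation is \emph{not} trivial on individual entries, which means I cannot argue entrywise — instead I must use the actual structure. The correct observation is that the similarity transformation $X\mapsto (D^{(\frac12)}\otimes D^{(j)})X(D^{(\frac12)}\otimes D^{(j)})^{-1}$ corresponds, under the grading where the $m$-th standard basis vector of $\mathbb{F}^{2j_1+1}$ has weight, to a weight rescaling, and $R$-matrices commute with the Cartan action in each tensor factor because $R$ is a weight-zero (intertwiner-type) operator. Concretely: $D^{(j)}$ is (up to normalization) $q^{\text{something}\cdot \diag(2j,2j-2,\ldots,-2j)}$ only when $k$ is a power of $q$, but for general $k$ the statement $[D^{(j_1)}\otimes D^{(j_2)},R^{(j_1,j_2)}(t)]=0$ should follow from $R^{(j_1,j_2)}(t)$ preserving the total-weight grading $\diag(2j_1,\ldots,-2j_1)\otimes\mI+\mI\otimes\diag(2j_2,\ldots,-2j_2)$, since $D^{(j_1)}\otimes D^{(j_2)}$ is a function ($k^{j_1-(\text{index}_1-1)+\cdots}$) of precisely that total-weight operator.

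So the real plan is: (1) Record that each $R^{(j_1,j_2)}(t)$ commutes with $Q^{(j_1,j_2)}:=\diag(2j_1,2j_1-2,\ldots,-2j_1)\otimes\mI_{2j_2+1}+\mI_{2j_1+1}\otimes\diag(2j_2,2j_2-2,\ldots,-2j_2)$. This is itself proved by induction using \eqref{eq:Rdef1}--\eqref{eq:Rdef2}: the base case is immediate from \eqref{eq:R^1/2} (the weight vector is $(2,0,0,-2)$ and $R^{(\frac12,\frac12)}$ visibly preserves it), and in the inductive step the maps $\cE^{(j+\frac12)}$, $\cF^{(j+\frac12)}$ are weight-homogeneous intertwiners between the relevant weight spaces — this is already implicit in \cite{LBG} and in the proof of Proposition \ref{prop:Rnonzero}, where the block-diagonal structure of $R^{(j_1,j_2)}(t)$ along $(a-b)$-diagonals is exactly the statement that weight is preserved. (2) Observe that $(a,b)$-entry of $D^{(j_1)}\otimes D^{(j_2)}$ being $k^{(a_1-1)+(a_2-1)}$ where $a=(a_1-1)(2j_2+1)+a_2$, this diagonal matrix equals $k^{\,(2j_1+2j_2 - Q^{(j_1,j_2)})/2}$ (a well-defined matrix since $Q^{(j_1,j_2)}$ is diagonal with integer entries and $k\ne 0$; one can even write it as $\exp\!\big(\tfrac12(2j_1+2j_2-Q^{(j_1,j_2)})\log k\big)$ formally, but the clean statement is just that it is a polynomial in the diagonal matrix $Q^{(j_1,j_2)}$ via Lagrange interpolation). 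Hence $D^{(j_1)}\otimes D^{(j_2)}$ commutes with anything that commutes with $Q^{(j_1,j_2)}$, and \eqref{eq:DDR} follows. The main obstacle is purely bookkeeping: verifying in step (1) that $\cE$ and $\cF$ have the claimed weight-homogeneity so that the inductive step goes through; but this is forced by Definitions \ref{def:E}, \ref{def:F} (their nonzero entries connect index $a$ to index $a$ and index $a+2j+1$ to $a+1$, which is precisely the pattern that splits a $(2j+2)$-dimensional weight space as the direct sum of a $\tfrac12$-weight space tensor a $j$-weight space) and is the same computation already used in Propositions \ref{prop:lim} and \ref{prop:Rnonzero}.
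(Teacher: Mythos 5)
Your final argument is correct and is essentially the paper's own proof: the paper writes the $(a,b)$-block of the commutator as $k^{a-1}D^{(j_2)}R^{(j_1,j_2)}_{(a,b)}(t)-R^{(j_1,j_2)}_{(a,b)}(t)k^{b-1}D^{(j_2)}$ and notes it vanishes by Proposition \ref{prop:Rnonzero}, which is exactly your observation that $D^{(j_1)}\otimes D^{(j_2)}$ is a function of the total-weight operator that $R^{(j_1,j_2)}(t)$ preserves --- in particular your step (1) needs no separate induction, since it is Proposition \ref{prop:Rnonzero} restated. (A minor note: the arithmetic in your discarded first attempt is off --- the $(a+2j)$-th diagonal entry of $D^{(\frac{1}{2})}\otimes D^{(j)}$ is $k^{a-1}$, not $k^{a+2j-1}$, so entrywise conjugation is in fact trivial there --- but this does not affect your final argument.)
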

\begin{proof}
We view $\left[D^{(j_1)} \otimes D^{(j_2)},R^{(j_1,j_2)}(t)\right]$ and $R^{(j_1,j_2)}(t)$ as block matrices where each block is of size $(2j_2+1) \times (2j_2+1)$. Then the $(a,b)$-block of $\left[D^{(j_1)} \otimes D^{(j_2)},R^{(j_1,j_2)}(t)\right]$ is equal to 
\begin{align*}
&k^{a-1}D^{(j_2)}R^{(j_1,j_2)}_{(a,b)}(t)-R^{(j_1,j_2)}_{(a,b)}(t)k^{b-1}D^{(j_2)}, 
\end{align*}
which is zero by Proposition \ref{prop:Rnonzero}. 
\end{proof}

\begin{definition}\label{def:bK}\rm
For $j \in \frac{1}{2}\mN^+$, define the matrix 
\begin{equation*}
\bar{K}^{(j)}(t)=\left(D^{(j)}\right)^{-1}K^{(j)}(t)D^{(j)}. 
\end{equation*}
\end{definition}

\begin{remark}\label{rm:RKRKalt}\rm
The matrix $\bar{K}^{(\frac{1}{2})}(t)$ can be obtained from the matrix $K(u)$ in \cite[Theorem 2.10]{baseilhac1} up to a scalar multiple via the correspondence 
\begin{align*}
U &\mapsto t^{-2}, &&\\
y_{n+1}^+ &\mapsto W_{-n}, & y_{-n}^+ &\mapsto W_{n+1},\\
\widetilde{z}_{n+1}^+ &\mapsto q^{-1}(q^2-q^{-2})G_{n+1}, & z_{n+1}^+ &\mapsto q^{-1}(q^2-q^{-2})\tG_{n+1},\\
 \bar{k}_+ &\mapsto q^{-\frac{1}{2}}(q+q^{-1})^{-\frac{1}{2}}(q-q^{-1})k, & \bar{k}_- &\mapsto q^{-\frac{1}{2}}(q+q^{-1})^{-\frac{1}{2}}(q-q^{-1})k^{-1}.
\end{align*}
\end{remark}

\begin{theorem}\label{thm:RKRKalt}\rm
For $j_1,j_2 \in \frac{1}{2}\mN^+$, 
\begin{equation}\label{eq:RKRKalt}
R^{(j_1,j_2)}(t/s) \star \bar{K}^{(j_1)}_{1}(s) \star \hR^{(j_1,j_2)} \star \bar{K}^{(j_2)}_{2}(t)=\bar{K}^{(j_2)}_{2}(t) \star \hR^{(j_1,j_2)} \star \bar{K}^{(j_1)}_{1}(s) \star R^{(j_1,j_2)}(t/s). 
\end{equation}
\end{theorem}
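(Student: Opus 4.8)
The plan is to obtain \eqref{eq:RKRKalt} from the already-proved Theorem \ref{thm:RKRK} by conjugating the whole of equation \eqref{eq:RKRK} by a single diagonal matrix. Put $\mathbf{D}=D^{(j_1)}\otimes D^{(j_2)}$, an invertible diagonal matrix of size $(2j_1+1)(2j_2+1)$ with entries in $\mF$. The essential point is that $\mathbf{D}$ has trivial component in the $q$-shuffle algebra, so the map $X\mapsto \mathbf{D}^{-1}X\mathbf{D}$ is an algebra automorphism of the ambient matrix algebra over $U$ that fixes the $U$-component; in particular it leaves all $q$-shuffle products intact and distributes over the four-fold $\star$-products appearing on each side of \eqref{eq:RKRK}.

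First I would record how $\mathbf{D}$ interacts with the three matrices in \eqref{eq:RKRK}. Since $\mathbf{D}$ and $\hR^{(j_1,j_2)}$ are both diagonal of the same size they commute, and since $\mathbf{D}=D^{(j_1)}\otimes D^{(j_2)}$ it commutes with $R^{(j_1,j_2)}(t/s)$ by Lemma \ref{lem:DDR} (apply \eqref{eq:DDR} with the ratio $t/s$ in place of $t$). Hence $\mathbf{D}^{-1}R^{(j_1,j_2)}(t/s)\mathbf{D}=R^{(j_1,j_2)}(t/s)$ and $\mathbf{D}^{-1}\hR^{(j_1,j_2)}\mathbf{D}=\hR^{(j_1,j_2)}$. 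Next, because $K^{(j_1)}_1(s)$ is the identity on the tensor factor carrying $D^{(j_2)}$, conjugation by $\mathbf{D}$ acts on $K^{(j_1)}_1(s)$ only through $D^{(j_1)}$ on the remaining factor, so by Definition \ref{def:bK} one gets $\mathbf{D}^{-1}K^{(j_1)}_1(s)\mathbf{D}=\bar{K}^{(j_1)}_1(s)$; symmetrically $\mathbf{D}^{-1}K^{(j_2)}_2(t)\mathbf{D}=\bar{K}^{(j_2)}_2(t)$.

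With these four identities in hand, I would simply apply the automorphism $X\mapsto \mathbf{D}^{-1}X\mathbf{D}$ to both sides of \eqref{eq:RKRK}. Since the automorphism distributes over the products and fixes $R^{(j_1,j_2)}(t/s)$ and $\hR^{(j_1,j_2)}$ while sending $K^{(j_1)}_1(s)$ and $K^{(j_2)}_2(t)$ to $\bar{K}^{(j_1)}_1(s)$ and $\bar{K}^{(j_2)}_2(t)$ respectively, the left-hand side of \eqref{eq:RKRK} becomes $R^{(j_1,j_2)}(t/s)\star \bar{K}^{(j_1)}_1(s)\star \hR^{(j_1,j_2)}\star \bar{K}^{(j_2)}_2(t)$ and the right-hand side becomes $\bar{K}^{(j_2)}_2(t)\star \hR^{(j_1,j_2)}\star \bar{K}^{(j_1)}_1(s)\star R^{(j_1,j_2)}(t/s)$; equating these is exactly \eqref{eq:RKRKalt}.

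I do not expect a genuine obstacle here; the only point that needs care is conceptual rather than computational. Namely, the \emph{single-site} conjugations built into the definition of $\bar{K}$ must be recognized as assembling into the \emph{two-site} conjugation by $\mathbf{D}=D^{(j_1)}\otimes D^{(j_2)}$, and it is essential to use this full $\mathbf{D}$ --- not $D^{(j_1)}\otimes\mI$ or $\mI\otimes D^{(j_2)}$ on their own --- when commuting past $R^{(j_1,j_2)}$, since Lemma \ref{lem:DDR} (which rests on the block structure from Proposition \ref{prop:Rnonzero}) is available only for the tensor-product combination. Alternatively one could dispense with the automorphism language and push the factors $D^{(j_1)}$ and $D^{(j_2)}$ through the $\star$-products by hand, using that diagonal matrices commute with $\hR^{(j_1,j_2)}$ and with any matrix supported on the complementary tensor factor, and then collapse the surviving copies into $\mathbf{D}^{\pm 1}$ before invoking \eqref{eq:DDR} and Theorem \ref{thm:RKRK}; the conjugation argument is merely a cleaner packaging of the same manipulation.
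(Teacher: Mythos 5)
Your proposal is correct and is essentially the paper's own proof: the paper likewise writes each side of \eqref{eq:RKRKalt} as the conjugate of the corresponding side of \eqref{eq:RKRK} by $D^{(j_1)}\otimes D^{(j_2)}$, using Lemma \ref{lem:DDR}, the diagonality of $\hR^{(j_1,j_2)}$, and Definition \ref{def:bK}, exactly as you do. The only cosmetic difference is that the paper pushes the factors $\left(D^{(j_1)}\otimes D^{(j_2)}\right)^{\pm 1}$ through the $\star$-products by hand rather than phrasing the step as applying a conjugation automorphism, which is the alternative packaging you already mention.
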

\begin{proof}
By Lemma \ref{lem:DDR} and Definition \ref{def:bK}, we have 
\begin{align*}
&R^{(j_1,j_2)}(t/s) \star \bar{K}^{(j_1)}_{1}(s) \star \hR^{(j_1,j_2)} \star \bar{K}^{(j_2)}_{2}(t)\\
&=R^{(j_1,j_2)}(t/s)\left(D^{(j_1)} \otimes D^{(j_2)}\right)^{-1} \star \left(K^{(j_1)}(s) \otimes \mI_{2j_2+1}\right)\\
&\hspace{4em} \star \left(D^{(j_1)} \otimes D^{(j_2)}\right)\hR^{(j_1,j_2)}\left(D^{(j_1)} \otimes D^{(j_2)}\right)^{-1} \star \left(\mI_{2j_1+1} \otimes K^{(j_2)}(t)\right)\left(D^{(j_1)} \otimes D^{(j_2)}\right)\\
&=\left(D^{(j_1)} \otimes D^{(j_2)}\right)^{-1}R^{(j_1,j_2)}(t/s) \star \left(K^{(j_1)}(s) \otimes \mI_{2j_2+1}\right)\\
&\hspace{8em} \star \hR^{(j_1,j_2)} \star \left(\mI_{2j_1+1} \otimes K^{(j_2)}(t)\right)\left(D^{(j_1)} \otimes D^{(j_2)}\right). 
\end{align*}

\noindent Similarly we have 
\begin{align*}
&\bar{K}^{(j_2)}_{2}(t) \star \hR^{(j_1,j_2)} \star \bar{K}^{(j_1)}_{1}(s) \star R^{(j_1,j_2)}(t/s)\\
&=\left(D^{(j_1)} \otimes D^{(j_2)}\right)^{-1}\left(\mI_{2j_1+1} \otimes K^{(j_2)}(t)\right) \star \left(D^{(j_1)} \otimes D^{(j_2)}\right)\hR^{(j_1,j_2)}\left(D^{(j_1)} \otimes D^{(j_2)}\right)^{-1}\\
&\hspace{4em} \star \left(K^{(j_1)}(s) \otimes \mI_{2j_2+1}\right) \star \left(D^{(j_1)} \otimes D^{(j_2)}\right)R^{(j_1,j_2)}(t/s)\\
&=\left(D^{(j_1)} \otimes D^{(j_2)}\right)^{-1}\left(\mI_{2j_1+1} \otimes K^{(j_2)}(t)\right) \star \hR^{(j_1,j_2)}\\
&\hspace{8em} \star \left(K^{(j_1)}(s) \otimes \mI_{2j_2+1}\right) \star R^{(j_1,j_2)}(t/s)\left(D^{(j_1)} \otimes D^{(j_2)}\right). 
\end{align*}

\noindent By the above discussion and \eqref{eq:RKRK}, we obtain \eqref{eq:RKRKalt}. 
\end{proof}

\noindent We also have the following corollary. 

\begin{corollary}\label{cor:KRKRalt}\rm
For $j_1,j_2 \in \frac{1}{2}\mN^+$, 
\begin{equation}\label{eq:KRKRalt}
\bar{K}^{(j_1)}_{1}(s) \star \hR^{(j_1,j_2)} \star \bar{K}^{(j_2)}_{2}(t) \star R^{(j_1,j_2)}(s/t)=R^{(j_1,j_2)}(s/t) \star \bar{K}^{(j_2)}_{2}(t) \star \hR^{(j_1,j_2)} \star \bar{K}^{(j_1)}_{1}(s). 
\end{equation}
\end{corollary}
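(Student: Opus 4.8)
The plan is to deduce \eqref{eq:KRKRalt} from Theorem \ref{thm:RKRKalt} in precisely the way Corollary \ref{cor:KRKR} was deduced from Theorem \ref{thm:RKRK}. Starting from \eqref{eq:RKRKalt}, I would multiply both sides on the left and on the right by $R^{(j_1,j_2)}(s/t)$, a matrix with scalar entries that acts only in the first two tensor factors and therefore multiplies freely with the remaining terms.

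On the left-hand side this creates the leading product $R^{(j_1,j_2)}(s/t)\star R^{(j_1,j_2)}(t/s)$, and on the right-hand side the trailing product $R^{(j_1,j_2)}(t/s)\star R^{(j_1,j_2)}(s/t)$. By Lemma \ref{lem:RR} each of these is a nonzero scalar times the identity, and the two scalars coincide: writing $R^{(j_1,j_2)}(u)R^{(j_1,j_2)}(u^{-1})=\mu(u)\mI$, left-multiplication by the invertible matrix $R^{(j_1,j_2)}(u^{-1})$ combined with $R^{(j_1,j_2)}(u^{-1})R^{(j_1,j_2)}(u)=\mu(u^{-1})\mI$ forces $\mu(u^{-1})=\mu(u)$. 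Cancelling this common nonzero scalar from both sides of the multiplied identity yields \eqref{eq:KRKRalt}.

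An equivalent and perhaps more transparent route is to conjugate Corollary \ref{cor:KRKR} by $D^{(j_1)}\otimes D^{(j_2)}$, mirroring the passage from Theorem \ref{thm:RKRK} to Theorem \ref{thm:RKRKalt}: by Lemma \ref{lem:DDR} this diagonal matrix commutes with $R^{(j_1,j_2)}(s/t)$, it commutes with the diagonal matrix $\hR^{(j_1,j_2)}$, and conjugation turns $K^{(j_1)}_1(s)$ and $K^{(j_2)}_2(t)$ into $\bar{K}^{(j_1)}_1(s)$ and $\bar{K}^{(j_2)}_2(t)$ by Definition \ref{def:bK}. Either way the argument is purely formal, so I expect no genuine obstacle; the only point requiring a line of care is checking that Lemma \ref{lem:RR} contributes the same scalar on both sides of the first argument, or, in the second argument, that $D^{(j_1)}\otimes D^{(j_2)}$ and $\hR^{(j_1,j_2)}$ commute --- immediate, since both are diagonal.
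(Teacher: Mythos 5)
Your first argument is exactly the paper's proof: multiply both sides of \eqref{eq:RKRKalt} on the left and right by $R^{(j_1,j_2)}(s/t)$ and cancel the resulting scalar via Lemma \ref{lem:RR}, and your observation that the two scalars coincide is correct (for square matrices $AB=\mu\mI$ with $\mu\neq 0$ forces $BA=\mu\mI$). The alternative route via conjugating Corollary \ref{cor:KRKR} by $D^{(j_1)}\otimes D^{(j_2)}$ is also sound but is not needed.
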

\begin{proof}
Multiply both sides of \eqref{eq:RKRKalt} by $R^{(j_1,j_2)}(s/t)$ on the left and on the right and simplify the result using Lemma \ref{lem:RR}. This yields \eqref{eq:KRKRalt}. 
\end{proof}

\section{Connection to the quasi R-matrix}
\noindent In this appendix, we discuss an alternative approach to obtain a Freidel-Maillet type equation of the same form as \eqref{eq:KRKR}, using the quasi R-matrix originally introduced in \cite[Chapter 4]{lusztig2}. This approach is from the perspective of universal properties, while it does not yield a closed form for the K-matrix. 

\medskip
\noindent Let $\mathfrak{g}$ be a symmetrizable Kac-Moody Lie algebra and let $U=U_q(\mathfrak{g})$ be its quantized enveloping algebra \cite{drinfeld,jimbo}. The algebra $U$ has a triangular decomposition $U=U^- \otimes U^0 \otimes U^+$. Let $\cO^\infty$ denote the category of weight modules with a locally finite $U^+$-action \cite[Section 2.3]{AV25}. The category $\cO^\infty$ coincides with the category $\cC^{hi}$ in \cite[Section 3.4.7]{lusztig2}. Let $R$ denote the universal R-matrix of $U$. We consider the completion $U^\text{c}$, which is the algebra of natural transformations from the forgetful functor $\cO^\infty \to \text{Vect}$ to itself; see \cite[Section 2.9]{AV22}, \cite[Section 3.1]{BK}. It is known that $U^\text{c}$ can be equipped with a quasitriangular structure given by $R$. 

\medskip 
\noindent Next we recall a factorization of $R$. Let $\widetilde{R} \in (U^- \otimes U^+)^\text{c}$ denote the quasi R-matrix \cite[Section 4.1.4]{lusztig2}. As in \cite[Section 7.3]{jantzen} or \cite[Lemma 4.3.2]{tanisaki}, we define $q^\Omega \in (U^0 \otimes U^0)^\text{c}$ by $q^\Omega$ acting on $V_\lambda \otimes W_\mu$ as scalar multiplication by $q^{(\lambda,\mu)}$, then we have 
\begin{equation}\label{eq:R=OR}
R=q^\Omega \widetilde{R}. 
\end{equation}

\noindent We define $\xi=\{\xi_V\}_{V \in \cO^\infty} \in U^\text{c}$ by $\xi_V$ acting on $V_\lambda$ as scalar multiplication by $q^{(\lambda,\lambda)/2}$. By \cite[(4.8)]{AV22}, we have 
\begin{equation}\label{eq:Dxi}
\Delta(\xi)=(\xi \otimes \xi)q^\Omega, 
\end{equation}
where $\Delta$ denotes the coproduct. 

\medskip
\noindent Now we define a K-matrix 
\begin{equation}\label{eq:K=XR}
K=(1 \otimes \xi^{-1})\widetilde{R}. 
\end{equation}

\medskip
\noindent We will construct a Freidel-Maillet type equation of the same form as \eqref{eq:KRKR} using $R$, $q^\Omega$, and $K$. 

\medskip
\noindent By \eqref{eq:R=OR} and \cite[(2.2)]{AV22}, we have 
\begin{equation}\label{eq:id*D}
(\text{id} \otimes \Delta)(\widetilde{R})=q^{-\Omega_{12}}\widetilde{R}_{13}q^{\Omega_{12}}\widetilde{R}_{12}. 
\end{equation}

\noindent On both sides of \cite[(2.1)]{AV22}, plug in $x=q^\Omega$ and take tensor product with $1$ on the left, then simplify the result using \cite[(2.2)]{AV22} and \eqref{eq:R=OR}. This yields 
\begin{equation*}\label{eq:Rconj1}
q^{-\Omega_{13}}\widetilde{R}_{23}q^{\Omega_{13}}=q^{\Omega_{12}}\widetilde{R}_{23}q^{-\Omega_{12}}. 
\end{equation*}

\noindent Apply a flip to the above equation on the first two legs. This yields 
\begin{equation}\label{eq:Rconj2}
q^{-\Omega_{23}}\widetilde{R}_{13}q^{\Omega_{23}}=q^{\Omega_{12}}\widetilde{R}_{13}q^{-\Omega_{12}}. 
\end{equation}

\noindent On both sides of \cite[(2.1)]{AV22}, plug in $x=K$ and take tensor product with $1$ on the left, then simplify the result using \eqref{eq:Dxi}--\eqref{eq:Rconj2}. This yields 
\begin{equation}\label{eq:RKRKuni}
R_{23}K_{13}q^{-\Omega_{23}}K_{12}=K_{12}q^{-\Omega_{23}}K_{13}R_{23}. 
\end{equation}

\noindent Evaluating \eqref{eq:RKRKuni} on the first leg, we obtain a Freidel-Maillet type equation of the same form as \eqref{eq:KRKR}.

\bigskip
\noindent Chenwei Ruan \\
Beijing Institute of Mathematical Sciences and Applications, Beijing 101408, China; \\
Yau Mathematical Sciences Center, Tsinghua University, Beijing 100084, China. \\
email: \href{mailto:ruanchenwei@bimsa.cn}{\nolinkurl{ruanchenwei@bimsa.cn}}; \href{mailto:cwruan@outlook.com}{\nolinkurl{cwruan@outlook.com}}

\begin{thebibliography}{10}
\bibitem{AV25}
A.~Appel and B.~Vlaar.
\newblock Trigonometric K-matrices for finite-dimensional representations of quantum affine algebras.
\newblock {\em J. Eur. Math. Soc.}, published online first, 2025, 
\href{https://arxiv.org/abs/2203.16503}{\nolinkurl{arXiv:2203.16503}}.

\bibitem{AV22}
A.~Appel and B.~Vlaar.
\newblock Universal K-matrices for quantum Kac-Moody algebras.
\newblock {\em Represent. Theory}, 26 (2022) 764--824,
\href{https://arxiv.org/abs/2007.09218}{\nolinkurl{arXiv:2007.09218}}.

\bibitem{BS}
P.~Baseilhac and K.~Shigechi.
\newblock A new current algebra and the reflection equation.
\newblock {\em Lett. Math. Phys.}, 92 (2010) 47--65,
\href{https://arxiv.org/abs/0906.1482v2}{\nolinkurl{arXiv:0906.1482v2}}.

\bibitem{BK}
M.~Balagovi\'c and S.~Kolb.
\newblock Universal K-matrix for quantum symmetric pairs.
\newblock {\em J. Reine Angew. Math (Crelles Journal)}, 747 (2019) 299--353,
\href{https://arxiv.org/abs/1507.06276}{\nolinkurl{arXiv:1507.06276}}.

\bibitem{baseilhac1}
P.~Baseilhac.
\newblock On the second realization for the positive part of
  {$U_q(\widehat{sl_2})$} of equitable type.
\newblock {\em Lett. Math. Phys.}, 112(1) (2022) Paper No. 2, 28,
\href{https://arxiv.org/abs/2106.11706}{\nolinkurl{arXiv:2106.11706}}.
 
\bibitem{baxter}
R.~Baxter.
\newblock Partition function of the eight-vertex lattice model.
\newblock {\em Annals of Phys.}, 70 (1972) 193--228.

\bibitem{beck}
J.~Beck. 
\newblock Braid group action and quantum affine algebras. 
\newblock{\em Commun. Math. Phys.} 165 (1994) 555--568, 
\href{https://arxiv.org/abs/hep-th/9404165}{\nolinkurl{arXiv:hep-th/9404165}}.
  
\bibitem{BLN}
N.~Beisert, M.~de~Leeuw, P.~Nag.
\newblock Fusion for the one-dimensional Hubbard model.
\newblock {\em J. Phys. A}, 48 (2015) 324002,
\href{https://arxiv.org/abs/1503.04838}{\nolinkurl{arXiv:1503.04838}}.
  
  \bibitem{CP}
V.~Chari and A.~Pressley.
\newblock Quantum affine algebras.
\newblock {\em Commun. Math. Phys.}
142 (1991) 261--283. 

\bibitem{CGM}
H.~Chen, N.~Guay, X.~Ma.
\newblock Twisted Yangians, twisted quantum loop algebras and affine Hecke algebras of type BC.
\newblock {\em Trans. Am. Math. Soc.}, 366 (2014) 2517–2574.

\bibitem{cherednik1}
I.~Cherednik.
\newblock Factorizing particles on the half-line and root systems.
\newblock {\em Teor. Mat. Fiz.}, 61 (1984) 35--44.

\bibitem{cherednik2}
I.~Cherednik.
\newblock Quantum Knizhnik-Zamolodchikov equations and affine root systems.
\newblock {\em Comm. Math. Phys.}, 150 (1992) 109–136.

\bibitem{damiani}
I.~Damiani.
\newblock A basis of type Poincar\'e-Birkoff-Witt for the quantum algebra of $\widehat{\text{sl}}(2)$.
\newblock {\em J. Algebra} 161 (1993) 291--310.

\bibitem{drinfeld}
V.~Drinfeld. 
\newblock Quantum groups.
\newblock{\em Proc. ICM Berkeley} 1 (1986) 789--820.

\bibitem{EFK}
P.~Etingof, I.~Frenkel, A.~Kirillov Jr..
\newblock {\em Lectures on representation theory and Knizhnik-Zamolodchikov equations}, volume 58 of {\em Mathematical Surveys and Monographs}.
\newblock American Mathematical Society, Providence, RI, 1998.

\bibitem{faddeev}
L.~Faddeev.
\newblock Integrable models in $(1+1)$-dimensional quantum field theory.
\newblock In {\em Recent advances in Field Theory and Statistical Mechanics (Les Houches, 1992)}, pages 561--608. J-B. Zuber and R. Stora Eds., Amsterdam North-Holland, 1984.

\bibitem{FM}
L.~Freidel and J.~Maillet.
\newblock Quadratic algebras and integrable systems.
\newblock {\em Phys. Lett. B} 262 (1991) 278.

\bibitem{GKS}
V.~Gorbounov, C.~Korff and C.~Stroppel.
\newblock Yang-Baxter algebras, convolution algebras, and Grassmannians.
\newblock {\em Russ. Math. Surv.}, 75(5) (2020) 791--842; translated from {\em Uspekhi Mat. Nauk}, 75(5) (2020) 3--58.

\bibitem{green}
J.~A. Green.
\newblock {\em Shuffle algebras, {L}ie algebras and quantum groups}, volume~9
  of {\em Textos de Matem\'{a}tica. S\'{e}rie B [Texts in Mathematics. Series
  B]}.
\newblock Universidade de Coimbra, Departamento de Matem\'{a}tica, Coimbra,
  1995.

\bibitem{jantzen}
J.~Jantzen.
\newblock {\em Lectures on quantum groups}, volume 6 of {\em Grad. Stud. Math.}.
\newblock Amer. Math. Soc., Providence, RI, 1996.
  
\bibitem{jimbo}
M.~Jimbo. 
\newblock A $q$-difference analog of $U(g)$ and the Yang-Baxter equation.
\newblock{\em Lett. Math. Phys.} {10} (1985) 63--69.

\bibitem{KKKO}
S.-J.~Kang, M.~Kashiwara, M.~Kim, and S.-J.~Oh.
\newblock Monoidal categorification of cluster algebras.
\newblock {\em J. Amer. Math. Soc.}, 31(2) (2018) 349--426,
\href{https://arxiv.org/abs/1801.05145}{\nolinkurl{arXiv:1801.05145}}.
  
\bibitem{karowski}
M.~Karowski.
\newblock On the bound state problem in $1+ 1$ dimensional field theories.
\newblock {\em Nucl. Phys. B}, 153 (1979) 244--252.

\bibitem{KR}
A.~Kirillov and N.~Reshetikhin.
\newblock Exact solution of the integrable XXZ Heisenberg model with arbitrary spin. I. The ground state and the excitation spectrum.
\newblock {\em J. Phys. A}, 20 (1987) 1565.

\bibitem{kolb}
S.~Kolb.
\newblock Quantum symmetric pairs and the reflection equation.
\newblock {\em Algebr. Represent. Theory}, 11(6) (2008) 519--544,
\href{https://arxiv.org/abs/math/0512581}{\nolinkurl{arXiv:math/0512581}}.

\bibitem{KRS}
P.~Kulish, N.~Reshetikhin and E.~Sklyanin.
\newblock Yang-Baxter equations and representation theory: I.
\newblock {\em Lett. Math. Phys.}, 5 (1981) 393--403.

\bibitem{KS}
P.~Kulish and E.~Sklyanin.
\newblock Algebraic structures related to reflection equations. 
\newblock {\em J. Phys. A: Math. Gen.}, 25(22) (1992) 5963.

\bibitem{leclerc}
B.~Leclerc.
\newblock Dual canonical bases, quantum shuffles and $q$-characters.
\newblock {\em Math. Z.}, 246 (2004) 691--732,
\href{https://arxiv.org/abs/math/0209133}{\nolinkurl{arXiv:math/0209133}}.

\bibitem{LBG}
G.~Lemarthe, P.~Baseilhac, A.~M.~Gainutdinov. 
\newblock Fused K-operators and the $q$-Onsager algebra. 
\newblock Preprint, 2023,
\href{https://arxiv.org/abs/2301.00781}{\nolinkurl{arXiv:2301.00781}}.

 \bibitem{lusztig1}
G.~Lusztig.
\newblock Canonical bases arising from quantized enveloping algebras.
\newblock {\em J. Amer. Math. Soc.}, 3 (1990) 447--498.
  
\bibitem{lusztig2}
G.~Lusztig.
\newblock {\em Introduction to quantum groups}, volume 110 of {\em Progress in
  Mathematics}.
\newblock Birkh\"{a}user Boston Inc., Boston, MA, 1993.

\bibitem{MO}
D.~Maulik and A.~Okounkov
\newblock {\em Quantum groups and quantum cohomology}, volume 408 of {\em Ast\'erisque}.
\newblock Soci\'et\'e Math\'ematique de France, Marseilles, 2019,
\href{https://arxiv.org/abs/1211.1287}{\nolinkurl{arXiv:1211.1287}}.
  
 \bibitem{MN}
L.~Mezincescu and R.~Nepomechie.
\newblock Fusion procedure for open chains.
\newblock {\em J. Phys. A}, 25 (1992) 2533.

\bibitem{nakajima}
H.~Nakajima.
\newblock Instantons on ALE spaces for classical groups, involutions on quiver varieties, and quantum symmetric pairs.
\newblock Preprint, 2025,
\href{https://arxiv.org/abs/2510.13007}{\nolinkurl{arXiv:2510.13007}}.

 \bibitem{noumi}
M.~Noumi.
\newblock Macdonald's symmetric polynomials as zonal spherical functions on some quantum homogeneous spaces. 
\newblock {\em Adv. Math.}, 123(1) (1996), 16--77.

\bibitem{NS}
M.~Noumi, T.~Sugitani.
\newblock Quantum symmetric spaces and related $q$-orthogonal polynomials.
\newblock In {\em Group Theoretical Methods in Physics (ICGTMP) (Toyonaka, Japan, 1994)}, pages 28--40. World Sci. Publishing, River Edge, N.J., 1995, 
\href{https://arxiv.org/abs/math/9503225}{\nolinkurl{arXiv:math/9503225}}.

\bibitem{PT}
S.~Post and P.~Terwilliger.
\newblock An infinite-dimensional {$\square_q$}-module obtained from the
  {$q$}-shuffle algebra for affine {$\frak{sl}_2$}.
\newblock {\em SIGMA Symmetry Integrability Geom. Methods Appl.}, 16 (2020) Paper No.
  037, 35,
\href{https://arxiv.org/abs/1806.10007}{\nolinkurl{arXiv:1806.10007}}.

\bibitem{RSV1}
N.~Resketikhin, J.~Stokman, B.~Vlaar.
\newblock Boundary quantum Knizhnik–Zamolodchikov equations and Bethe vectors.
\newblock {\em Comm. Math. Phys.}, 336 (2015) 953--986,
\href{https://arxiv.org/abs/1305.1113}{\nolinkurl{arXiv:1305.1113}}.
  
\bibitem{RSV2}
N.~Reshetikhin, J.~Stokman, B.~Vlaar.
\newblock Boundary quantum Knizhnik-Zamolodchikov equations and fusion.
\newblock {\em Ann. Henri Poincar\'{e}}, 17 (2016) 137--177,
\href{https://arxiv.org/abs/1404.5492}{\nolinkurl{arXiv:1404.5492}}.

\bibitem{ringel}
C.~Ringel. 
\newblock Hall algebras and quantum groups. 
\newblock {\em Invent. Math.}, 101(3) (1990) 583--591.

\bibitem{rosso1}
M.~Rosso.
\newblock Groupes quantiques et alg\`ebres de battage quantiques.
\newblock {\em C. R. Acad. Sci. Paris S\'{e}r. I Math.}, 320(2) (1995) 145--148.

\bibitem{rosso2}
M.~Rosso.
\newblock Quantum groups and quantum shuffles.
\newblock {\em Invent. Math.}, 133(2) (1998) 399--416.

\bibitem{tanisaki}
T.~Tanisaki.
\newblock Killing forms, Harish-Chandra isomorphisms, and universal R-matrices for quantum algebras.
\newblock {\em Int. J. Mod. Phys. A}, 7S1B (1992) 941--961.

\bibitem{uniform}
C.~Ruan.
\newblock A uniform approach to the {D}amiani, {B}eck, and alternating {PBW}
  bases for the positive part of {$U_q(\widehat{\mathfrak{sl}}_2)$}.
\newblock {\em J. Algebra Appl.}, online ready, 2025, 
\href{https://arxiv.org/abs/2305.11152}{\nolinkurl{arXiv:2305.11152}}.

\bibitem{dalt}
C.~Ruan.
\newblock Doubly alternating words in the positive part of {$U_q(\widehat{\mathfrak{sl}}_2)$}. 
\newblock {\em J. Algebra}, 664 (2025) 288--311,
\href{https://arxiv.org/abs/2408.02633}{\nolinkurl{arXiv:2408.02633}}.

\bibitem{ter_alternating}
P.~Terwilliger.
\newblock The alternating {PBW} basis for the positive part of
  {$U_q(\widehat{\mathfrak{sl}}_2)$}.
\newblock {\em J. Math. Phys.}, 60(7) (2019) 071704, 27,
\href{https://arxiv.org/abs/1902.00721}{\nolinkurl{arXiv:1902.00721}}.

\bibitem{ter_catalan}
P.~Terwilliger.
\newblock Using {C}atalan words and a {$q$}-shuffle algebra to describe a {PBW}
  basis for the positive part of {$U_q(\widehat{\mathfrak{sl}}_2)$}.
\newblock {\em J. Algebra}, 525 (2019) 359--373,
\href{https://arxiv.org/abs/1806.11228}{\nolinkurl{arXiv:1806.11228}}.

\bibitem{ter_beck}
P.~Terwilliger.
\newblock Using Catalan words and a $q$-shuffle algebra to describe the Beck PBW basis for the positive part of $U_q(\widehat{\mathfrak{sl}}_2)$.
\newblock{\em J. Algebra} 604 (2022) 162--184,
\href{https://arxiv.org/abs/2108.12708}{\nolinkurl{arXiv:2108.12708}}.

\bibitem{yang}
C.~N.~Yang.
\newblock Some exact results for the many-body problem in one dimension with repulsive delta-function interaction.
\newblock {\em Phys. Rev. Lett}, 19 (1967) 1312.
\end{thebibliography}
\end{document}